\renewcommand{\setminus}{{\smallsetminus}}
\newcommand{\lmfrac}[2]{\mbox{\small$\displaystyle\frac{#1}{#2}$}}   
\newcommand{\smfrac}[2]{\mbox{\footnotesize$\displaystyle\frac{#1}{#2}$}} 
\newcommand{\smsum}[2]{\mbox{\footnotesize$\displaystyle\sum\limits_{#1}^{#2}$}}
\newcommand{\tmsum}[2]{\mbox{$\textstyle \sum\limits_{#1}^{#2}$}}
\newcommand{\smcup}[2]{\mbox{\footnotesize$\displaystyle\bigcup\limits_{#1}^{#2}$}}
\newcommand{\tmcup}[2]{\mbox{$\textstyle \bigcup\limits_{#1}^{#2}$}}
\newcommand{\smoplus}[2]{\mbox{\footnotesize$\displaystyle\bigoplus\limits_{#1}^{#2}$}}
\newcommand{\smprod}[2]{\mbox{\footnotesize$\displaystyle\prod\limits_{#1}^{#2}$}}
\newcommand{\tmprod}[2]{\mbox{$\textstyle \prod\limits_{#1}^{#2}$}}
\def\alphas{{\widehat{\alpha}}}
\newcommand{\bp}{\begin{pmatrix}}
\newcommand{\ep}{\end{pmatrix}}
\newcommand{\be}{\begin{equation}}
\newcommand{\ee}{\end{equation}}
\newcommand{\ol}[1]{\overline{#1}}
\numberwithin{equation}{section}
\theoremstyle{plain}
\newtheorem{theorem}[equation]{Theorem}
\newtheorem{lemma}[equation]{Lemma}
\newtheorem{proposition}[equation]{Proposition}
\newtheorem{algorithm}[equation]{Algorithm}
\newtheorem{construction}[equation]{Construction}
\newtheorem{thm}[equation]{Theorem}
\newtheorem{prop}[equation]{Proposition}
\newtheorem*{claim*}{Claim}
\theoremstyle{definition}
\newtheorem{example}[equation]{Example}
\newtheorem{remark}[equation]{Remark}
\newtheorem{definition}[equation]{Definition}
\newtheorem{conventions}[equation]{Conventions}
\newtheorem{defn}[equation]{Definition}
\numberwithin{equation}{section}
 \newtheoremstyle{TheoremNum}
        {}{}              
        {\itshape}                      
        {}                              
        {\bfseries}                     
        {.}                             
        { }                             
        {\thmname{#1}\thmnote{ \bfseries #3}}
\theoremstyle{TheoremNum}
\def\N{\mathbb N}
\def\Z{\mathbb Z}
\def\R{\mathbb R}
\def\Q{\mathbb Q}
\def\C{\mathbb C}
\def\F{\mathbb{F}}
\def\wt#1{\widetilde{#1}}
\def\sm{\setminus}
\def\a{\alpha}
\def\g{\gamma}
\def\toiso{\xrightarrow{\simeq}}
\def\zp{\mathbb{Z}[\pi]}
\def\ll{\langle}
\def\rr{\rangle}
\def\zt{\Z[t,t^{-1}]}
\def\ft{\F[t,t^{-1}]}
\def\bp{\begin{pmatrix}}
\def\ep{\end{pmatrix}}
\def\ba{\begin{array}}
\def\ea{\end{array}}
\def\bn{\begin{enumerate}}
\def\en{\end{enumerate}}
\def\zpx{\Z[\pi]}
\def\op{\operatorname}
\def\BS{\op{BS}}
\def\PD{\op{PD}}
\def\ev{\op{ev}}
\DeclareMathOperator\Ext{Ext}
\DeclareMathOperator\Hom{Hom}
\DeclareMathOperator\Aut{Aut}
\DeclareMathOperator\Id{Id}
\DeclareMathOperator\ord{ord}
\DeclareMathOperator\GL{GL}
\DeclareMathOperator\Bl{Bl}
\DeclareMathOperator\im{im}
\DeclareMathOperator\ab{ab}
\DeclareMathOperator{\Wr}{Wr}
\DeclareMathOperator{\sign}{sign}
\def\tpm{[t,t^{-1}]}
\def\bla{\Bl^\a}
\def\ftk{\F(t)^k}
\def\hom{\op{Hom}}
\def\ol{\overline}
\def\PD{\op{PD}}
\def\ev{\op{ev}}
\def\what{\widehat}
\newcommand{\eps}{\varepsilon}
\keywords{Twisted Blanchfield pairing, symmetric Poincar\'{e} chain complex, knot concordance}
\subjclass[2010]{57M25, 
 57M27, 
 57N70. 
}
\begin{document}

\title[Twisted Blanchfield pairings of knots]{Symmetric chain complexes, twisted Blanchfield pairings, and knot concordance}

\author{Allison N.~Miller}
\address{Department of Mathematics, University of Texas, Austin, USA}
\urladdr{http://www.ma.utexas.edu/users/amiller/index.html}
\email{amiller@math.utexas.edu}

\author{Mark Powell}
\address{Department of Mathematics, Durham University, United Kingdom}
\email{mark.a.powell@durham.ac.uk}

\begin{abstract}
We give a formula for the duality structure of the 3-manifold obtained by doing zero-framed surgery along a knot in the 3-sphere, starting from a diagram of the knot. We then use this to give a combinatorial algorithm for computing the twisted Blanchfield pairing of such 3-manifolds. With the twisting defined by Casson-Gordon style representations, we use our computation of the twisted Blanchfield pairing to show that some subtle satellites of genus two ribbon knots yield non-slice knots.  The construction is subtle in the sense that, once based, the infection curve lies in the second derived subgroup of the knot group.
\end{abstract}

\maketitle

\section{Introduction}

This article has three parts.  The first part describes the \emph{symmetric Poincar\'e chain complex} of the 3-manifold $M_K$ obtained by doing 0-framed Dehn surgery on $S^3$ along a knot $K \subset S^3$.  The second part gives an algorithm to compute the \emph{twisted Blanchfield pairing} of $M_K$ with respect to a representation of its fundamental group.  Finally, we give an application of our ability to implement this computation to \emph{knot concordance}.

\subsection{The symmetric chain complex of the zero surgery}

Let $\pi$ be a group and let $n \in \mathbb{N}_0$.  Roughly speaking, an $n$-dimensional symmetric chain complex \cite{Ranicki3} $(C_*,\Phi)$ is a chain complex $(C_*,\partial)$ of free finitely generated $\Z[\pi]$-modules, together with a chain map $\Phi_0 \colon C^{n-*} \to C_*$, a chain homotopy $\Phi_1 \colon \Phi_0 \sim \Phi_0^*$, together with a sequence of higher chain homotopies $\Phi_{i+1} \colon \Phi_{i} \sim (-1)^i \Phi_i^*$, for $i=1,\dots,n-1$.  The r\^ole in this article of higher homotopies will be peripheral.

An $n$-dimensional manifold $M$ with $\pi_1(M) =\pi$ gives rise to an $n$-dimensional symmetric chain complex over $\Z[\pi]$.  In this case the maps $\varphi_0$ induce the Poincar\'e duality isomorphisms
\[-\cap [M] \colon H^{n-r}(M;\Z[\pi]) \to H_r(M;\Z[\pi]).\]
More generally, an arbitrary symmetric complex is called \emph{Poincar\'e} if the maps $\Phi_0$ constitute a chain equivalence.    The symmetric chain complex of a manifold contains the maximal data that the manifold can give to homological algebra via a handle or CW decomposition.

The first part of this paper, comprising Sections \ref{section:Poincare-cxs-definitions} and \ref{section:example-knot-exteriors}, gives a procedure to explicitly write down the $3$-dimensional symmetric Poincar\'e chain complex of the zero-framed surgery manifold $M_K$ of an oriented knot $K \subset S^3$.

\begin{algorithm}\label{algorithm:symm-chain-cx}
  We describe a combinatorial algorithm that takes as input a diagram of an oriented knot $K$ and produces
 a symmetric chain complex $(C_*,\Phi)$ of the zero-framed surgery on $K$ with coefficients in $\Z[\pi_1(M_K)]$, with explicit formulae for the boundary maps $\partial \colon C_i \to C_{i-1}$ and the symmetric structure maps $\Phi_0 \colon C^{3-r} \to C_r$.
\end{algorithm}

This is based on a precise understanding of a handle decomposition of $M_K$ (Construction~\ref{Thm:includingboundary}), from which we exhibit, in Theorem~\ref{Thm:mainchaincomplex}, a cellular chain complex for (a space homotopy equivalent to) $M_K$ with coefficients in $\pi_1(M_K) =: \pi$.
The novelty is the use in Section~\ref{section:trotters-formulae} of formulae of Trotter~\cite{Trotter} to produce a diagonal chain approximation map
\[\Delta_0 \colon C_*(M_K;\Z) \to C_*(M_K;\Z[\pi]) \otimes_{\Z[\pi]} C_*(M_K;\Z[\pi]). \]
The image $\Delta_0([M_K])$ of a fundamental class $[M_K] \in C_3(M_K;\Z)$ under $\Delta_0$ gives rise to the $\Phi_0$ maps, under the identification $(C_* \otimes C_*)_3 \cong \Hom(C^{3-*},C_*)$, where $C_* = C_*(M_K;\Z[\pi])$.

\subsection{The twisted Blanchfield pairing}

Let $R$ be a commutative principal ideal domain with involution, and let $Q$ be its quotient field.  Let $\alpha \colon \pi_1(M_K) \to U(R^k)$ be a unitary representation of the fundamental group $\pi := \pi_1(M_K)$ of~$M_K$.  This makes $R^k$ into an $(R,\Z[\pi])$-bimodule, using the right action of $U(R^k)$ on $R^k$ represented as row vectors.
We can use this representation to define the twisted homology $H_*(M_K;R^k_{\alpha})$ as follows.  Start with the chain complex $C_*(M_K;\Z[\pi])$ and tensor over the representation to obtain $R^k \otimes_{\Z[\pi]} C_*(M_K;\Z[\pi])$.  The \emph{homology of $M_K$ twisted over $\alpha$} is the homology $H_*(R^k \otimes_{\Z[\pi]} C_*(M_K;\Z[\pi]))$.
The $R$-torsion submodule of an $R$-module $P$ is $TP := \{p\in P \mid rp=0 \text{ for some } r \in R\sm \{0\} \}$.
The \emph{twisted Blanchfield pairing}
\[\Bl^{\a} \colon TH_1(M_K;R^k_\a) \times TH_1(M_K;R^k_\a) \to Q/R\]
is a nonsingular, hermitian, sesquilinear form defined on the $R$-torsion submodule of the first homology.

The precise definition of the twisted Blanchfield pairing can be found in Section~\ref{section:defn-TBF}, but we give an outline here.
Start with a CW decomposition of $M_K$.  We want to compute the pairing of two elements $[x],[y] \in TH_1(M_K;R^k_\a)$, represented as 1-chains $x$ and $y$ in the cellular chain complex $C_1(M_K;R^k_\a)$ of $M_K$ with coefficients in $R^k_{\a}$.  Find the Poincar\'e dual $[v] \in TH^2(M_K;R^k_\a)$ of $[x]$, represented by a $2$-cochain $v \in C^2(M_K;R^k_{\a})$ such that $v \cap [M_K] =x$.  Since $[v]$ lies in the $R$-torsion subgroup, there exists $r \in R$ and $w \in C^1(M_K;R^k_{\a})$ such that $\partial^*(w)=v$.  We then pair $w$ and $y$ and divide by $r$, to obtain \[\Bl^{\a}([x],[y]) = w(y)/r.\]  This is an element of $Q$ whose image in the quotient $Q/R$ is well-defined, being independent of the choices of chains $x,y,$ and $w$ and of the element $r \in R$.

This procedure can be explicitly followed using the data of the symmetric chain complex of $M_K$.  In Section~\ref{section:algebraic-defn-TBF} we give an algorithm to make this computation, and we implement this algorithm using Maple.  This enables us to explicitly compute the twisted Blanchfield pairing of a pair of elements of $TH_1(M_K;R^k_\a)$, at least for suitably amiable representations.

\begin{algorithm}\label{algorithm:twisted-Bl-pairing}
  We describe a combinatorial algorithm that takes as input a $3$-dimensional symmetric chain complex over $\Z[\pi]$, a unitary representation $\alpha \colon \pi_1(M_K) \to U(R^k)$, and two elements $x,y \in  TH_1(M_K;R^k_\a)$, and outputs the twisted Blanchfield pairing $\Bl^{\a}(x,y) \in Q/R$.
\end{algorithm}

\subsection{Constructing non-slice knots}

An oriented knot $K$ in $S^3$ is said to be a \emph{slice knot} if there is a locally flat proper embedding of a disc $D^2 \hookrightarrow D^4$, with the boundary of $D^2$ sent to $K \subset S^3$.  The set of oriented knots modulo slice knots inherits a group structure from the connected sum operation, called the knot concordance group and denoted by~$\mathcal{C}$.  Throughout the paper, for a submanifold $N \subset M$, let $\nu N$ denote a tubular neighbourhood of $N$ in $M$.  Note that the boundary $\partial(D^4 \sm \nu D^2)$ of the exterior of a slice disc is the zero-framed surgery manifold $M_K$.

We will construct new non-slice knots that lie in the kernel of Levine's~\cite{Levine:1969-1} homomorphism $\mathcal{L} \colon \mathcal{C} \to \mathcal{AC}$ to the \emph{algebraic concordance group} $\mathcal{AC} \cong \Z^\infty \oplus \Z_2^{\infty} \oplus \Z_4^{\infty}$ of Seifert forms modulo metabolic forms.  Here a Seifert form is \emph{metabolic} if there is a half-rank summand on which the form vanishes.

To construct our non-slice knots we will use a \emph{satellite construction}.
Let $K$ be an oriented knot in $S^3$, let $\eta \subset S^3 \sm \nu K$ be a simple closed curve in $S^3 \sm \nu K$, which is unknotted in $S^3$, and let $J \subset S^3$ be another oriented knot.
The knot $K$ will be referred to as the pattern knot, $\eta$ as the infection curve (or axis), and $J$ will be referred to as the infection (or companion) knot.

  Consider the $3$-manifold \[\Sigma := S^3 \sm \nu \eta \cup_{\partial \mathrm{cl}(\nu \eta)} S^3 \sm \nu J,\]
  where the gluing map identifies the meridian of $\eta$ with the zero-framed longitude of $J$, and vice versa. The 3-manifold $\Sigma$ is diffeomorphic to $S^3$, via an orientation preserving diffeomorphism that is unique up to isotopy.  The image of $K \subset S^3 \sm \nu \eta$ under this diffeomorphism is by definition the satellite knot $K_{\eta}(J)$; this operation of altering~$K$ by~$J$ is called the \emph{satellite construction} or \emph{genetic infection}.
In our constructions, we will start with a slice knot $K$, and for suitable $\eta$ and $J$ we will show that $K_{\eta}(J)$ is not slice.

Our non-slice knots will be produced using a single explicitly drawn curve $\eta \in \pi_1(S^3 \sm \nu K)^{(2)}$, the second derived subgroup of the knot group.
Here the \emph{derived series} of a group $\pi$ is defined via $\pi^{(0)}:= \pi$ and $\pi^{(i+1)} :=[\pi^{(i)},\pi^{(i)}]$, the smallest normal subgroup containing $ghg^{-1}h^{-1}$ for all $g,h \in \pi^{(i)}$.

In usual constructions of this sort, one often has $\eta \in \pi_1(S^3 \sm \nu K)^{(1)}$, the commutator or first derived subgroup of the knot group.  For examples of non-slice knots arising from satellite constructions, see the use of Casson-Gordon invariants~\cite{Casson-Gordon:1978-1},~\cite{Casson-Gordon:1986-1} in~\cite{Gilmer:1983-1},~\cite{Livingston:1983-1}, \cite{Gilmer-Livingston:1992-1}, \cite{Livingston:2002-1} and \cite{Livingston-2002-2}, and the use of $L^{(2)}$-signature techniques in \cite{Cochran-Orr-Teichner:1999-1}, \cite{Cochran-Orr-Teichner:2002-1}, \cite{Cochran-Kim:2004-1}, \cite{Cochran-Harvey-Leidy:2009-1}, \cite{Cochran-Harvey-Leidy:2008-2}, \cite{Cha-Orr:2009-01}, \cite{Cha:2014-1} and~\cite{Franklin:2013}.
We also allow pattern knots $K$ of arbitrary genus, whereas in many of the $L^{(2)}$ papers listed above, the pattern knots were often genus one.  Our approach generalises the example from \cite[Section~6]{Cochran-Orr-Teichner:1999-1}, and indeed in Section~\ref{section:examples} we reprove that the knot considered there is not slice.   In \cite{Cochran-Kim:2004-1}, pattern knots were genus two and higher, but they used multiple infection curves.
Here is a discussion of the previous literature and its relation to our knots.  We are grateful to Taehee Kim for sharing his perspective.

\begin{enumerate}
  \item In \cite{Cochran-Harvey-Leidy:2009-1} and \cite{Cha:2014-1}, non-slice knots were constructed by iterated satellite constructions.  Examples were given with a single infection curve.
       Start with a ribbon knot $R$ and an infection curve $\eta_1$ in $\pi_1(S^3 \sm \nu R)^{(1)}$.  Then infect $R$ with itself to obtain the satellite knot $R(R,\eta_1)$.   Now to construct non-slice knots, \cite{Cochran-Harvey-Leidy:2009-1} and \cite{Cha:2014-1} infect this using a curve that lies in the second derived subgroup $\pi_1(S^3 \sm \nu R(R,\eta_1))^{(2)}$.
       However in these constructions the infection curves lie in the first derived subgroup of each of the building pieces of the iterated satellite construction, and the non-triviality of these curves in a slice disc complement is detected by the classical Blanchfield pairing of each piece.
  \item In \cite{Cochran-Kim:2004-1}, the infection curves arise as commutators of generators of $\pi_1(F)$, where $F$ is a minimal genus Seifert surface for the pattern knot.  They can be drawn explicitly, although this would be quite laborious.
  \item In the current paper, we obtained our examples by drawing a likely-looking curve, and then checking by computation with the twisted Blanchfield pairing, as explained below, that infection gives rise to a non-slice knot.  In \cite{Cochran-Kim:2004-1,Cochran-Harvey-Leidy:2009-1,Cha:2014-1}, they found homology classes that work to produce non-slice knots from the algebra of higher order Alexander modules over non-commutative rings. One can draw representative infection curves in a knot diagram.  In this previous work, the emphasis was on finding non-slice knots with certain properties relating to the solvable filtration.  In the present work, we aim to provide a new tool to detect non-slice knots.
      The fact that we work with commutative rings makes the twisted Blanchfield pairing a particularly useful computational tool.
\end{enumerate}

The key to our approach is to show that the infection curve $\eta$, when thought of as an element of $\pi_1(M_{K_{\eta}(J)})$, represents a nontrivial element of $\pi_1(D^4 \sm \nu D^2)$, for \emph{any} possible slice disc $D^2 \subset D^4$ for $K_{\eta}(J)$.  We will achieve this using the twisted Blanchfield pairing, as we explain next.

For a knot $K$, let $\Sigma_k(K)$ be the $k$-fold branched cover of $S^3$ branched along $K$.  Recall that there is a nonsingular symmetric linking pairing $\lambda_k \colon H_1(\Sigma_k(K);\Z) \times H_1(\Sigma_k(K);\Z) \to \Q/\Z$, and that a metaboliser $P \subseteq H_1(\Sigma_k(K);\Z)$ is a submodule of square-root order on which the linking pairing vanishes.
In Section \ref{section:reps} we will associate, to a knot $K$ and a metaboliser $P$, along with some auxiliary choices, a unitary \emph{Casson-Gordon type representation} $\a_P \colon \pi_1(M_K) \to U(k,\ft)$. Here $\mathbb{F} := \Q(\zeta_q)$ with $\zeta_q$ a $q$-th root of unity, for $q$ a prime power.  With such representations, the twisted Blanchfield pairing gives rise to the following slice obstruction theorem, the full version of which appears as Theorem~\ref{thm:slice}.

\begin{theorem}\label{thm:metabolic-TBP-intro}
Let $K$ be an oriented slice knot with slice exterior $W:= D^4 \sm \nu D^2$.
 Then for any prime power $k$, there exists a metaboliser $P$ of $\lambda_k$
such that for any Casson-Gordon type representation $\a_P \colon \pi_1(M_K) \to U(k,\Q(\zeta_q)[t^{\pm 1}])$ corresponding to $P$, there is a prime power $q'$ with $q \mid q'$ such that the twisted Blanchfield pairing $\Bl(i \circ \a_P)$ is metabolic with metaboliser \[\ker\left( TH_1(M_K;\ft^k_{\a_P}) \to TH_1(W;\ft^k_{\a_P})\right),\] where $i$ is the inclusion on the level of unitary groups corresponding to the inclusion $\Z_q \hookrightarrow \Z_{q'}$ and $\mathbb{F} = \Q(\zeta_{q'})$.
\end{theorem}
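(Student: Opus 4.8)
The plan is to follow the classical strategy for showing that a slice disc exterior induces a metaboliser on a linking form, adapting it to the twisted setting. First I would record the standard half-lives-half-dies phenomenon: for a compact oriented $4$-manifold $W$ with $\partial W = M_K$ and a coefficient system $R^k_\alpha$ pulled back from $\pi_1(W)$, the kernel of $H_1(M_K;R^k_\alpha) \to H_1(W;R^k_\alpha)$ is ``half'' of $H_1(M_K)$ in an appropriate sense, and crucially is \emph{self-annihilating} under the twisted Blanchfield pairing. The self-annihilation part is essentially formal: if $x, y \in TH_1(M_K;R^k_\alpha)$ both bound $2$-chains in $W$, then the definition of $\mathrm{Bl}^\alpha$ via Poincar\'e--Lefschetz duality in $W$ (expressing $\mathrm{Bl}^\alpha(x,y)$ as an intersection number of bounding chains pushed into the interior of $W$) shows the pairing value lies in $R$, hence vanishes in $Q/R$. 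The fact that this kernel is \emph{exactly} a metaboliser — i.e. that the pairing restricted to the orthogonal complement is again nonsingular, equivalently that the kernel has the right ``size'' — follows from nonsingularity of $\mathrm{Bl}^\alpha$ together with the long exact sequence of $(W, M_K)$ and duality; this is the argument appearing in e.g. the Casson--Gordon literature and its later refinements, and I would cite the relevant version (Section~\ref{section:defn-TBF} should provide the needed algebraic Blanchfield form machinery).

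The genuinely new issue, and the one that forces the ``$q \mid q'$'' enlargement in the statement, is that a Casson--Gordon type representation $\alpha_P \colon \pi_1(M_K) \to U(k, \Q(\zeta_q)[t^{\pm1}])$ need not extend over $\pi_1(W)$ directly. The representation is built in Section~\ref{section:reps} from a metaboliser $P \subseteq H_1(\Sigma_k(K);\Z)$ together with a choice of character $H_1(\Sigma_k(K))/P \to \Q/\Z$ (landing in $\frac{1}{q}\Z/\Z$) and an identification coming from the infinite cyclic cover. To extend the associated representation over $W$, one needs: (i) the metaboliser $P$ to be the one induced by $W$ — this is where the existential quantifier over $P$ in the theorem comes from, using the $4$-manifold $\Sigma_k(W)$, the $k$-fold cover of $W$ branched over the slice disc, and the classical fact that $\ker(H_1(\Sigma_k(K)) \to H_1(\Sigma_k(W)))$ is a metaboliser for $\lambda_k$; and (ii) the relevant character to extend over $H_1(\Sigma_k(W))$, which in general only happens after composing with $\Z_q \hookrightarrow \Z_{q'}$ for a sufficiently divisible prime power $q'$, since $H_1(\Sigma_k(W))$ may have torsion obstructing the extension at level $q$. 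So the main obstacle is bookkeeping the interplay between the branched cover of the slice disc exterior, the torsion in its first homology, and the precise form of the Casson--Gordon character — and verifying that after the enlargement $q \mapsto q'$ the enlarged representation $i \circ \alpha_P$ does extend over $\pi_1(W)$ (perhaps only over a finite-index-related cover, which must be handled carefully).

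With the extension in hand, the rest is assembling the pieces. I would let $\beta \colon \pi_1(W) \to U(k, \Q(\zeta_{q'})[t^{\pm1}])$ denote the extension of $i \circ \alpha_P$, form the twisted chain complex $C_*(W;\ft^k_\beta)$, and apply the self-annihilation argument above to the kernel $\mathcal{P} := \ker(TH_1(M_K;\ft^k_{\alpha_P}) \to TH_1(W;\ft^k_{\alpha_P}))$. One checks $\mathrm{Bl}(i \circ \alpha_P)$ vanishes on $\mathcal{P} \times \mathcal{P}$ using that a $2$-chain in $C_2(W)$ with boundary a cycle representing an element of $\mathcal{P}$ provides the cochain $w$ and element $r \in \ft$ in the defining procedure recalled in the introduction, with $w(y)/r \in \ft$ so that its class in $\Q(\zeta_{q'})(t)/\ft$ is zero. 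Then one verifies $\mathcal{P}^\perp = \mathcal{P}$ by a rank/order count: nonsingularity of $\mathrm{Bl}(i\circ\alpha_P)$ gives $|\mathcal{P}| \cdot |\mathcal{P}^\perp| = |TH_1(M_K;\ft^k_{\alpha_P})|$ (interpreted via orders of torsion $\ft$-modules, i.e. Alexander-polynomial-type invariants), while the exact sequence $H_2(W) \to H_2(W,M_K) \to H_1(M_K) \to H_1(W)$ together with Poincar\'e--Lefschetz duality over $\ft$ forces $|\mathcal{P}|^2 = |TH_1(M_K;\ft^k_{\alpha_P})|$. I expect the only subtlety beyond the extension problem to be ensuring all these homology groups are $\ft$-torsion (so that orders are defined) — this should follow from the fact that $\ft \otimes$ the relevant rational homologies vanish, which is built into the choice of Casson--Gordon representation having been arranged so that $H_*(M_K;\Q(\zeta_q)(t)^k) = 0$.
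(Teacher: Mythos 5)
Your overall strategy matches the paper's: enlarge the character from $\Z_q$ to $\Z_{q'}$ so that the metabelian representation extends over the slice exterior $W$, then show that $\mathcal{P}:=\ker\big(TH_1(M_K;\ft^k_{\a}) \to TH_1(W;\ft^k_{\a})\big)$ is self-annihilating and of the correct size. The one structural difference is that you run the extension and the choice of metaboliser through the branched covers $\Sigma_k(W)$, whereas the paper (Proposition~\ref{prop:extend}, via Lemma~\ref{lem:linkmetabolic}) works with the unbranched cyclic covers $W_k$ and the quotient $\Z\ltimes H_1(W;\Z[t^{\pm1}])/(t^k-1)$; for the slice statement both are legitimate, but the unbranched formulation is what lets the paper generalise to $2$-solutions, where there is no slice disc to branch over.

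There is, however, one genuine gap. You claim that the $\ft$-torsionness of the relevant homology of $(W,M_K)$ --- equivalently the vanishing of $H_*(W;\mathbb{F}(t)^k)$, which is what both your self-annihilation argument and your order count $|\mathcal{P}|^2=|TH_1(M_K;\ft^k_{\a})|$ actually require --- is ``built into'' the choice of Casson--Gordon representation, i.e.\ follows from $H_*(M_K;\mathbb{F}(t)^k)=0$. It does not: the long exact sequence of the pair only yields $H_2(W,M_K;\mathbb{F}(t)^k)\cong H_2(W;\mathbb{F}(t)^k)$, and the latter has no reason to vanish for a general $4$-manifold bounded by $M_K$ over which the representation extends (in the paper's $2$-solution case it has $\mathbb{F}(t)$-dimension $2nk$). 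For a slice disc exterior its vanishing is a substantive theorem --- Casson--Gordon's Lemma~4 \cite{Casson-Gordon:1986-1}, reproved in \cite{Friedl-Powell:2010-1} --- and it is exactly where the prime-power order of the character, together with the fact that $M_K\hookrightarrow W$ is a $\Z$-homology equivalence, gets used. This is the input the paper invokes to obtain exactness of $TH_2(W,M_K;\ft^k_{\a}) \to TH_1(M_K;\ft^k_{\a}) \to TH_1(W;\ft^k_{\a})$, after which Proposition~\ref{prop:metabolic} gives the metaboliser. With that citation supplied in place of your ``automatic'' claim your argument goes through; without it, neither the vanishing of the pairing on $\mathcal{P}\times\mathcal{P}$ as you run it (bounding chains in $W$ implicitly lift classes to \emph{torsion} relative classes) nor the count $|\mathcal{P}|^2=|TH_1(M_K;\ft^k_{\a})|$ is justified.
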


The extension from $q$ to $q'$ is potentially necessary in order to extend the representation over the slice exterior~$W$.
This theorem recovers the twisted Fox-Milnor condition of \cite{Kirk-Livingston:1999-2}, that twisted Alexander polynomials of slice knots factor as a norm (Lemma~\ref{lem:alexnorm}).
In order to use this theorem to go beyond the results of Kirk and Livingston, we use the following obstruction theorem, which is based on ideas of Cochran, Harvey and Leidy~\cite{Cochran-Harvey-Leidy:2009-1}.  The version with full details appears below as Theorem~\ref{thm:sliceobstruction}; in particular the theorem will be generalised to obstruct 2.5-solvability (Definition~\ref{Defn:filtration}).  To state the theorem, we should recall the Tristram-Levine signature function: let $V$ be a Seifert matrix of a knot $J$, and then define
$\sigma_J \colon S^1 \to \Z$ by $\omega \mapsto \sign\big((1-\omega)V + \ol{(1-\omega)}V^T \big)$, thinking of $S^1 \subset \mathbb{C}$.  For an algebraically slice knot $J$, $\sigma_J$ is almost everywhere zero on $S^1$.  Thus $\int_{S^1} \sigma_J(\omega)\, d \omega =0$ for algebraically slice knots.

\begin{theorem}\label{thm:main-slice-obstruction-L2-intro}
Let $R$ be a slice knot and let $\eta \in \pi_1(S^3 \sm \nu R)^{(2)}$.
Suppose that there is some prime power $k$ such that for each metaboliser $P$ for the linking form $\lambda_k(R)$, there is some Casson-Gordon type representation $\a_P$ corresponding to $P$ such that
\[ \Bl_{M_R}^{\alpha_P}(\eta, \eta) \neq 0 \text{ in } \mathbb{F}(t)/ \ft.\]
Then there is a constant $C_R>0$, depending only on the knot $R$, such that if $J$ is a knot with $\left| \int_{S^1}\sigma_J(\omega)\, d \omega \right|>C_R$,
then $K := R_{\eta}(J)$ is not slice.
\end{theorem}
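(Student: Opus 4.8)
The plan is to combine the metabolicity statement of Theorem~\ref{thm:metabolic-TBP-intro} with a non-vanishing computation for the satellite knot, using the additivity of the twisted Blanchfield pairing under satellite operations and the behaviour of Casson--Gordon representations under infection. First I would suppose, for contradiction, that $K = R_\eta(J)$ is slice, with slice exterior $W$. By the naturality of the branched cover construction, the $k$-fold branched cover $\Sigma_k(K)$ is obtained from $\Sigma_k(R)$ by surgery along the preimages of $\eta$; since $\eta \in \pi_1(S^3 \sm \nu R)^{(2)}$ and in particular $\eta$ lies in the kernel of the map to $\Z/k$, the lifts $\wt\eta_1,\dots,\wt\eta_k$ are nullhomologous, so $H_1(\Sigma_k(K);\Z)$ and $H_1(\Sigma_k(R);\Z)$ are related in a controlled way. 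Applying Theorem~\ref{thm:metabolic-TBP-intro} to the slice knot $K$ gives a metaboliser $P$ of $\lambda_k(K)$ and a corresponding metaboliser $P_R$ of $\lambda_k(R)$ (pulled back along the surgery correspondence) such that the hypothesis of Theorem~\ref{thm:main-slice-obstruction-L2-intro} provides, for $P_R$, a Casson--Gordon representation $\a_{P_R}$ with $\Bl_{M_R}^{\a_{P_R}}(\eta,\eta) \neq 0$ in $\mathbb{F}(t)/\ft$.

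The second step is the satellite formula for the twisted Blanchfield pairing. The zero-surgery $M_K$ is built from $M_R \sm \nu\eta$ and $S^3 \sm \nu J$ glued along a torus, and the representation $\a_P$ on $\pi_1(M_K)$ restricts to $\a_{P_R}$ on the $M_R$-side while on the $S^3 \sm \nu J$-side it is determined by a character value $\xi = \a_P(\mu_J)$, a root of unity. A Mayer--Vietoris argument then shows that there is a direct-sum decomposition
\[
\Bl_{M_K}^{\a_P} \;\cong\; \Bl_{M_R}^{\a_{P_R}} \;\oplus\; \Bl^{\xi}_{J},
\]
where $\Bl^{\xi}_J$ is the Blanchfield-type pairing of $J$ twisted by the one-dimensional character $\xi$, and the class of $\eta \in TH_1(M_K;\ft^k_{\a_P})$ corresponds to the class of $\eta$ in $TH_1(M_R;\ft^k_{\a_{P_R}})$ under this splitting together with a contribution from the $J$-piece measured by the Tristram--Levine signature $\sigma_J(\xi)$ (or more precisely its integral, after one averages over the relevant roots of unity). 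The key input here is that the ``size'' of the $J$-contribution, which can in principle cancel the $M_R$-contribution, is controlled by $\int_{S^1}\sigma_J(\omega)\,d\omega$; choosing $C_R$ to be the maximum over the finitely many metabolisers $P_R$ of $\lambda_k(R)$ and the finitely many admissible characters of a bound governing when cancellation could occur, the hypothesis $\left|\int_{S^1}\sigma_J(\omega)\,d\omega\right| > C_R$ forces $\Bl_{M_K}^{\a_P}(\eta,\eta) \neq 0$.

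Finally I would derive the contradiction: since $K$ is slice, Theorem~\ref{thm:metabolic-TBP-intro} says $\Bl_{M_K}^{\a_P}$ is metabolic with metaboliser the kernel of $TH_1(M_K;\ft^k_{\a_P}) \to TH_1(W;\ft^k_{\a_P})$, and because $\eta$ lies in $\pi_1(S^3\sm\nu R)^{(2)}$ it dies in $\pi_1(W)$ (this uses that $\eta$, being deep in the derived series, bounds in any slice disc exterior by the standard Stallings/Dwyer-filtration argument), hence $[\eta]$ lies in this metaboliser and so $\Bl_{M_K}^{\a_P}(\eta,\eta) = 0$, contradicting the previous paragraph. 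The main obstacle I anticipate is the second step: proving the precise direct-sum decomposition of the twisted Blanchfield pairing under infection, keeping careful track of how the torsion class $[\eta]$ sits inside $TH_1(M_K;\ft^k_{\a_P})$ and extracting the explicit signature-integral bound $C_R$; the derived-series argument that $\eta$ is nullhomotopic in $W$ and the bookkeeping over the finite set of metabolisers are comparatively standard.
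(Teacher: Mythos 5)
There are two fatal gaps, and together they show the proposal misses the actual mechanism by which the signature integral and the constant $C_R$ enter. First, the claimed ``satellite formula'' $\Bl_{M_K}^{\a_P} \cong \Bl_{M_R}^{\a_{P_R}} \oplus \Bl^{\xi}_J$ with a $J$-contribution measured by $\sigma_J$ does not exist at this level. Because $\eta \in \pi_1(S^3 \sm \nu R)^{(2)}$, the meridian of $J$ is identified with the longitude of $\eta$, which lies in the second derived subgroup, and the Casson--Gordon type representation $\a_P$ factors through a metabelian group; hence $\a_P$ is \emph{trivial} on $\pi_1(S^3 \sm \nu J)$, the twisted homology of the $J$-piece agrees with that of the unknot exterior, and the degree-one map $M_K \to M_R$ induces an isomorphism of twisted Blanchfield pairings (this is exactly the Mayer--Vietoris step in the paper's proof of Theorem~\ref{thm:sliceobstruction}). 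In other words, the twisted Blanchfield pairing literally cannot see $J$, so there is no cancellation to prevent and no way to extract a bound $C_R$ on $\left|\int_{S^1}\sigma_J(\omega)\,d\omega\right|$ from the pairing itself; moreover the pairing takes values in $\F(t)/\ft$, not $\R$, so a signature integral cannot ``control its size.''

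Second, the concluding step --- that $\eta$ ``dies in $\pi_1(W)$ by the standard Stallings/Dwyer-filtration argument,'' hence lies in the metaboliser, hence $\Bl_{M_K}^{\a_P}(\eta,\eta)=0$ --- is unjustified and in fact runs opposite to the point of the theorem. There is no such theorem: elements of the second derived subgroup need not die in a slice disc exterior (if they always did, your argument combined with the isomorphism $\Bl_{M_K}^{\a}\cong\Bl_{M_R}^{\a}$ would show that $R_\eta(U)=R$ itself is not slice, a contradiction). What one can prove (Proposition~\ref{prop:keyproposition}) is the \emph{contrapositive}: nonvanishing of $\Bl_{M_R}^{\a_P}(\eta,\eta)$ forces $\eta=\mu_J$ to survive in $\pi_1(W)/\pi_1(W)^{(3)}_{(\Q,\Z_{q^r},\Q)}$ for the slice/solution $4$-manifold $W$. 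The slice obstruction is then completed not by the Blanchfield pairing but by $L^{(2)}$ $\rho$-invariants: the amenable signature theorem gives $\rho^{(2)}(M_K,\phi)=0$ for $\phi$ into this quotient, additivity under infection gives $\rho^{(2)}(M_K,\phi)=\rho^{(2)}(M_R,\phi_R)+\rho^{(2)}(M_J,\phi_J)$, nontriviality of $\phi_J$ and $L^{(2)}$-induction give $\rho^{(2)}(M_J,\phi_J)=\int_{S^1}\sigma_J(\omega)\,d\omega$, and $C_R$ is the universal Cheeger--Gromov bound on $|\rho^{(2)}(M_R,-)|$. Your proposal omits this entire second stage, which is where both the hypothesis on $\int_{S^1}\sigma_J$ and the constant $C_R$ actually do their work.
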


The idea behind the proof is that the Blanchfield pairing condition guarantees that $\eta$ does not live in any metaboliser, and therefore does not lie in the kernel of the map induced on fundamental groups by the inclusion of the zero surgery into the slice disc exterior.  Using this ``robustness'' of $\eta$ together with the condition on the integral of the Tristram-Levine signatures of $J$ from the theorem, one can show that the $L^{(2)}$ $\rho$-invariant of $M_{R_{\eta}(J)}$ must be large, obstructing  $R_{\eta}(J)$ from being slice.   Connoisseurs might enjoy the novel use of a mixed coefficient derived series in Proposition~\ref{prop:keyproposition}.  We present some examples of the use of Theorem~\ref{thm:main-slice-obstruction-L2-intro}; details appear in Section~\ref{section:examples}.

\begin{proposition}
  Suppose the knot $J$ is such that $\left| \int_{S^1}\sigma_J(\omega)\, d \omega \right|> 10^{10}$
  (for example, $J$ is a connected sum of $10^{10}$ right handed trefoils).  Let $(R,\eta)$ be one of the $($pattern knot, infection curve$)$ pairs from Figure~\ref{Fig:88example}, Figure~\ref{Fig:cotexample} or Figure~\ref{Fig:squareknotexample}.   Then $R_{\eta}(J)$ is not slice.
\end{proposition}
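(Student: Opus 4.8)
The plan is to deduce the statement directly from Theorem~\ref{thm:main-slice-obstruction-L2-intro}. For each of the three pairs $(R,\eta)$ drawn in Figure~\ref{Fig:88example}, Figure~\ref{Fig:cotexample} and Figure~\ref{Fig:squareknotexample} we must verify the three hypotheses of that theorem: that $R$ is slice, that $\eta\in\pi_1(S^3\sm\nu R)^{(2)}$, and that there is a prime power $k$ for which \emph{every} metaboliser $P$ of the linking form $\lambda_k(R)$ admits a Casson--Gordon type representation $\a_P$ with $\Bl_{M_R}^{\a_P}(\eta,\eta)\neq 0$ in $\F(t)/\ft$. Granting these, the theorem produces a constant $C_R$ depending only on $R$; it then remains to observe that in each of the three cases $C_R$ is far smaller than $10^{10}$. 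Indeed the constant arising from the proof of Theorem~\ref{thm:main-slice-obstruction-L2-intro} is an explicit quantity assembled from the orders of the homology of the relevant branched covers and the number of Casson--Gordon representations in play, so it is tiny for knots as small as those in the figures. Hence any $J$ with $\bigl|\int_{S^1}\sigma_J(\omega)\,d\omega\bigr|>10^{10}$ satisfies $\bigl|\int_{S^1}\sigma_J(\omega)\,d\omega\bigr|>C_R$ and the theorem applies to give that $R_{\eta}(J)$ is not slice. For the parenthetical example one uses that the Tristram--Levine signature integral is additive under connected sum and that the right-handed trefoil contributes a fixed nonzero amount, so that $10^{10}$ copies suffice.

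The first two hypotheses can be read off the figures. Each $R$ is presented as a genus two ribbon knot, the band moves exhibiting a ribbon disc, so $R$ is slice. The curve $\eta$ is drawn lying on a genus two Seifert surface $F$ for $R$, and moreover as a product of commutators of the standard generators of $\pi_1(F)$; since the inclusion $F\hookrightarrow S^3\sm\nu R$ carries $\pi_1(F)$ into $\pi_1(S^3\sm\nu R)^{(1)}$, it carries $\pi_1(F)^{(1)}$ into $\pi_1(S^3\sm\nu R)^{(2)}$, so $\eta\in\pi_1(S^3\sm\nu R)^{(2)}$. This is the source of the subtlety advertised in the abstract, and it is the only place the precise position of $\eta$ on the surface matters for this step.

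The substance of the proof, and the step I expect to be the main obstacle, is the verification of the third hypothesis, which is entirely computational. I would fix a small prime power $k$ so that $\Sigma_k(R)$ has manageable homology; compute $H_1(\Sigma_k(R);\Z)$ together with $\lambda_k$ and enumerate its finitely many metabolisers $P$; and, for each $P$, build a Casson--Gordon type representation $\a_P\colon\pi_1(M_R)\to U(k,\ft)$ as in Section~\ref{section:reps}. Then feed a diagram of $R$ into Algorithm~\ref{algorithm:symm-chain-cx} to obtain an explicit $3$-dimensional symmetric Poincar\'e chain complex of $M_R$ over $\Z[\pi_1(M_R)]$, and feed this complex, the representation $\a_P$, and the class of $\eta$ in $TH_1(M_R;\ft^k_{\a_P})$ into the Maple implementation of Algorithm~\ref{algorithm:twisted-Bl-pairing}, which outputs $\Bl_{M_R}^{\a_P}(\eta,\eta)\in\F(t)/\ft$; one checks that this output is nonzero, and doing so for every metaboliser $P$ completes the verification. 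The delicate points here are bookkeeping rather than conceptual: presenting $\eta$ as an honest cycle in the twisted chain complex produced by the algorithm, tracking the change of coordinates between the surgery description of $M_R$ and the Seifert-surface description of $\eta$, and confirming that the representations chosen are amiable enough for the computation to terminate. These computations are carried out in Section~\ref{section:examples}, which completes the proof.
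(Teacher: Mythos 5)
Your overall strategy is the same as the paper's: apply the satellite obstruction theorem (Theorem~\ref{thm:main-slice-obstruction-L2-intro}, whose full form is Theorem~\ref{thm:sliceobstruction}) and verify its hypotheses for each pair $(R,\eta)$ by the computations of Section~\ref{section:examples}. However, your justification of the decisive numerical step is wrong. The constant $C_R$ is \emph{not} ``an explicit quantity assembled from the orders of the homology of the relevant branched covers and the number of Casson--Gordon representations in play'', and it is not ``tiny''. It is the universal Cheeger--Gromov bound: a constant such that $|\rho^{(2)}(M_R,\psi)|<C_R$ for \emph{every} representation $\psi$ of $\pi_1(M_R)$, which enters the proof when comparing $\rho^{(2)}(M_R,\phi_R)$ with $\rho^{(2)}(M_J,\phi_J)=\int_{S^1}\sigma_J(\omega)\,d\omega$. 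The only reason $10^{10}$ suffices is the explicit estimate of Cha (\cite{Cha:2016-CG-bounds}) quoted after Theorem~\ref{thm:sliceobstruction}: one may take $C_R \leq 10^{8}\cdot c(R)$ with $c(R)$ the crossing number, and $c(R)<10^{2}$ in all three examples, so $C_R<10^{10}$. Without that input your argument that $C_R<10^{10}$ has no support, and since the specific threshold $10^{10}$ is the content of the proposition, this is a genuine gap rather than a bookkeeping omission.

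Two smaller points. First, the paper's verification that $\eta\in\pi_1(X_R)^{(2)}$ does not go through a Seifert surface: in each figure $\eta$ is read off as an explicit commutator of visibly null-homologous words in the Wirtinger generators (e.g.\ $\eta=[g_5^{-1}g_1,\,g_7^{-1}g_3]$ for $8_8$). Your Seifert-surface argument is sound in principle (curves on $F$ are null-homologous in $X_R$, so $\pi_1(F)^{(1)}$ maps into $\pi_1(X_R)^{(2)}$), but it presumes the figures exhibit $\eta$ as a commutator in $\pi_1(F)$, which is not how they are presented. Second, the full obstruction theorem requires the nonvanishing $\Bl_{M_R}^{\alpha(k,\chi_P)}(\eta,\eta)\neq 0$ for \emph{all} extensions $\chi_b$, $b\geq a$, of the character, i.e.\ in $\Q(\zeta_{q^b})(t)/\Q(\zeta_{q^b})[t^{\pm 1}]$ for arbitrarily large $b$; the paper handles this by showing the computed value is nonzero even in $\C(t)/\C[t^{\pm1}]$ (a degree comparison for $8_8$, evaluation at $t=1$ for the Cochran--Orr--Teichner knot). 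Checking nonvanishing only for the initial representation over $\Q(\zeta_q)$, as your sketch suggests, does not verify the hypothesis as stated in Theorem~\ref{thm:sliceobstruction}.
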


The number $10^{10}$ is a power of 10 guaranteed to overcome the universal Cheeger-Gromov bound \cite{Cheeger-Gromov:1985-1}. In particular the explicit upper bound from \cite{Cha:2016-CG-bounds} is less than $10^8$ times the crossing number of $R$, and the crossing number of $R$ in all our examples is less than $10^2$.

As alluded to above, our second example in Section~\ref{section:examples} is the knot constructed in \cite[Section~6]{Cochran-Orr-Teichner:1999-1}, which was the first example of a non-slice knot with vanishing Casson-Gordon invariants.  We give a simpler proof and more generally applicable proof than theirs that this knot is not slice.

\subsection*{Conventions.} Throughout the paper we assume that all manifolds are connected, compact and oriented, unless we say explicitly otherwise.

\subsection*{Acknowledgments.}
We thank Stefan Friedl for many valuable discussions and comments on the paper.  The middle third of this paper arose from a project began several years ago thanks to discussions of the second author with Stefan.
The first third of this paper is based on material from the 2011 University of Edinburgh PhD thesis of the second author, supervised by Andrew Ranicki.
We are indebted to Taehee Kim for an enlightening discussion on the relationship of our results with others in the literature.
We would also like to thank Jae Choon Cha, Anthony Conway, Chris Davis, Min Hoon Kim, Taehee Kim, Matthias Nagel and Patrick Orson for their interest and input.
We thank the anonymous referee for a careful reading and invaluable suggestions for improving the paper.
The authors are grateful to the Hausdorff Institute for Mathematics in Bonn, in whose excellent research atmosphere part of this paper was written.
The second author is supported by an NSERC Discovery Grant.

\section{Symmetric Poincar\'{e} complexes}\label{section:Poincare-cxs-definitions}

In this section we introduce some basic homological algebra definitions, including sign conventions, and we give the precise definition of a symmetric complex.
The material of this section is due to Ranicki, primarily~\cite{Ranicki3}, and the reader looking for more details is referred to there.

\subsection{Basic chain complex constructions and conventions}\label{section:basic-chain-cx-constructions}

 Recall that $R$ denotes a ring with involution.  By convention, chain complexes consist of left $R$-modules unless otherwise stated.  Given a chain complex $C$ of left $R$-modules, let $C^t$ be the chain complex of right $R$-modules obtained by converting each left module to a right module using the involution on $R$.  That is, for  $c \in C$ and $r \in R$ the  right action of $r$ on $c$ is given by $c \cdot r := \overline{r} c$.

\begin{definition}[Tensor chain complexes]\label{Defn:signsoontensor}
Given chain complexes $(C,d_C)$ and $(D,d_D)$ of finitely generated (henceforth f.g.) projective $R$-modules, form the tensor product chain complex $C \otimes_{R} D$ with chain groups:
\[(C^t \otimes_R D)_n := \smoplus{p+q=n}{} \, C^t_p \otimes_R D_q.\]
 The boundary map
\[d_{\otimes} \colon (C^t \otimes_R D)_n \to (C^t \otimes_R D)_{n-1}\]
is given, for $x \otimes y \in C^t_p \otimes_R D_q \subseteq (C^t \otimes_R D)_n$, by
\[d_{\otimes}(x \otimes y) = x \otimes d_D(y) + (-1)^q d_C(x) \otimes y.\]
\end{definition}

\begin{definition}[$\Hom$ chain complexes]\label{defn:hom-chain-complex}
Define the complex $\Hom_R(C,D)$ by
\[\Hom_R(C,D)_n := \smoplus{q-p=n}{}\,\Hom_R(C_p,D_q)\]
with boundary map
\[d_{\Hom} \colon \Hom_R(C,D)_n \to \Hom_R(C,D)_{n-1}\]
given, for $g \colon C_p \to D_q$, by
\[d_{\Hom}(g) = d_D g + (-1)^q g d_C.\]
\end{definition}

\begin{definition}[Dual complex]\label{defn:dual-complex}
The dual complex $C^*$ is defined as a special case of Definition~\ref{defn:hom-chain-complex} with $D_0 = R$ as the only non-zero chain group. Note that $D_0=R$ is also an $R$-bimodule.  Explicitly we define $C^r := \Hom_R(C_r,R)^t$, with boundary map
$\delta=d^*_C \colon C^{r-1} \to C^{r}$
defined as
$\delta(g) = g \circ d_C.$
Using that $R$ is a bimodule over itself, the chain groups of $C^*$ are naturally right modules. But we use the involution to make them into left modules, as described in Section~\ref{section:twistedhomology}: for $f \in C^*$ and $a \in R$, let $(a \cdot f)(x) := f(x) \ol{a}$.

The chain complex $C^{-*}$ is defined to be
\[(C^{-*})_r = C^{-r};\;\; d_{C^{-*}} = (d_C)^*=\delta.\]
Also define the complex $C^{m-*}$ by:
\[(C^{m-*})_r = \Hom_R(C_{m-r},R)\]
with boundary maps
\[\partial^* \colon (C^{m-*})_{r+1} \to (C^{m-*})_{r}\]
given by
\[\partial^* = (-1)^{r+1}\delta.\]
\end{definition}

Define the dual of a cochain complex (i.e.\ the double dual) to be $C^{**}:=(C^{-*})^{-*}$.
The next proposition allows us to identify a chain complex with its double dual; its proof is a straightforward verification.

\begin{proposition}[Double dual]
For a finitely generated projective chain complex $C_*$, there is an isomorphism
$C_* \xrightarrow{\simeq} C^{**}$ given by $x \mapsto (f \mapsto \ol{f(x)}).$
\end{proposition}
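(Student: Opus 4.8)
The plan is to verify directly that the proposed map $\theta\colon C_* \to C^{**}$, $x \mapsto (f \mapsto \ol{f(x)})$, is a well-defined chain map and an isomorphism in each degree. First I would unwind the definitions: by Definition~\ref{defn:dual-complex}, $(C^{-*})_r = C^{-r} = \Hom_R(C_{-r},R)^t$ regarded as a left module via $(a\cdot f)(x) = f(x)\ol a$, so applying the construction a second time, $(C^{**})_r = \big((C^{-*})^{-*}\big)_r = \Hom_R\big((C^{-*})_{-r},R\big)^t = \Hom_R(C_r,R)^{**}$ with the appropriate left-module structure. One then checks that for fixed $x \in C_r$ the assignment $f \mapsto \ol{f(x)}$ indeed lands in $\Hom_R(C_r,R)$: it is additive, and for $a \in R$ we have $(a\cdot f)(x) = f(x)\ol a$, so $\ol{(a\cdot f)(x)} = \ol{f(x)\ol a} = a\,\ol{f(x)}$, using that involution is an anti-automorphism; comparing with how $R$ acts on the target shows the required equivariance, so $\theta(x)$ is a genuine element of the double dual with the correct module structure, and $x \mapsto \theta(x)$ is itself $R$-linear by the same bar-manipulation.

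Next I would check that $\theta$ commutes with the differentials. On $C^{-*}$ the boundary is $\delta = d_C^*$ with $\delta(g) = g\circ d_C$, and on the double dual the boundary is $\delta^* = (d_C^*)^*$. So the claim is that for $x \in C_r$ and $g \in C^{-(r-1)} = \Hom_R(C_{r-1},R)^t$ we have $\theta(d_C x)(g) = (\delta^*\theta(x))(g)$. Unwinding, $(\delta^*\theta(x))(g) = \theta(x)(\delta g)\cdot(\pm 1) = \ol{(\delta g)(x)} = \ol{g(d_C x)}$ up to the sign bookkeeping built into the $C^{-*}$ convention, while $\theta(d_C x)(g) = \ol{g(d_C x)}$; so the two agree, modulo carefully tracking the signs $(-1)^{\ast}$ that Definition~\ref{defn:dual-complex} inserts when dualizing. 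This is the one place where a slip is easy, so I would write out the degree-$r$ piece explicitly once.

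Finally, bijectivity in each degree reduces to the classical fact that for a finitely generated projective left $R$-module $P$, the double-dual evaluation map $P \to P^{**}$ is an isomorphism: it is an isomorphism for $P = R^n$ by a direct matrix computation, hence for any direct summand of a free module by naturality and additivity of $\Hom(-,R)$, and $C_r$ is f.g.\ projective by hypothesis. Composing with the bar operation (which is a bijection of sets intertwining the two module structures) preserves the isomorphism property. The main obstacle, such as it is, is purely the sign/involution bookkeeping in the chain-map check — there is no conceptual difficulty, which is why the paper calls it ``a straightforward verification''; the substantive input is just finite-generation and projectivity of the $C_r$, used exactly to make double-duality an isomorphism degreewise.
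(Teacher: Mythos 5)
Your proof is correct and is exactly the ``straightforward verification'' the paper has in mind: unwind the involuted left-module structures on the dual, check equivariance of the evaluation map, check compatibility with the differentials, and invoke reflexivity of finitely generated projective modules degreewise. The only minor remark is that the sign bookkeeping you worry about is vacuous here, since the construction $C^{-*}$ (unlike $C^{m-*}$) introduces no signs into the coboundary, so the chain-map check is immediate.
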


\begin{definition}[Slant map]\label{defn:slant-map}
The slant map is the isomorphism
\[\ba{rcl} \backslash \colon C^t \otimes_R C & \to & \Hom_R(C^{-*},C_*)\\
x \otimes y & \mapsto & \left(g \mapsto \overline{g(x)}y\right). \ea\]
\end{definition}

\begin{definition}[Transposition]
Let $C_*$ be a chain complex of projective left $R$-modules for a ring with involution~$R$.  Define the transposition map
\[\ba{rcl} T \colon C_p^t \otimes C_q &\to& C_q^t \otimes C_p\\
x \otimes y &\mapsto& (-1)^{pq} y \otimes x.\ea\]
This $T$ generates an action of $\Z_2$ on $C^t \otimes_{R} C$.
Also let $T$ denote the corresponding map on homomorphisms:
\[\ba{rcl} T \colon \Hom_R(C^p,C_q) &\to& \Hom_R(C^q,C_p)\\
\theta &\mapsto& (-1)^{pq} \theta^*. \ea\]
 \end{definition}

\subsection{Symmetric Poincar\'{e} complexes and closed manifolds}\label{section:symm-cxs-closed-mflds}

In this section we explain symmetric structures on chain complexes, following Ranicki~\cite{Ranicki3}.  Later on we will see that the chain complex of a manifold inherits a symmetric structure.

Take $M$ to be an $n$-dimensional closed manifold with $\pi:= \pi_1(M)$ and universal cover $\wt{M}$.
Let
$\wt{\Delta} \colon \wt{M} \to \wt{M} \times \wt{M}$; $y \mapsto (y,y)$
be the diagonal map on the universal cover of $M$.  This map is $\pi$-equivariant, so we can take the quotient by the action of $\pi$ to obtain
\begin{equation}\label{topoldiagonal2}
\Delta \colon M \to \wt{M} \times_{\pi} \wt{M},
\end{equation}
where $\wt{M} \times_{\pi} \wt{M} := \wt{M} \times \wt{M}/(\{(x,y) \sim (gx,gy) \,|\,g \in \pi)$.
The notion of a symmetric structure arises from an algebraic version of this map, as we now proceed to describe.

The Eilenberg-Zilber theorem \cite[Chapter~VI,~Corollary~1.4]{Br93} says that there is a natural chain equivalence $EZ  \colon C(\wt{M} \times \wt{M}) \simeq C(\wt{M}) \otimes_{\Z} C(\wt{M})$.
By a mild abuse of notation, let
\[\wt{\Delta_*} \colon C(\wt{M}) \to C(\wt{M}) \otimes_{\Z} C(\wt{M})\]
be the composition of the map induced on chain complexes by $\wt{\Delta}$ followed by $EZ$.
Take the tensor product over $\Z[\pi]$ with $\Z$ of both the domain and codomain, to obtain:
\[\Delta_0 \colon C(M) \to C(\wt{M}) \otimes_{\Z[\pi]} C(\wt{M}).\]
The map $\Delta_0$ evaluated on the fundamental class $[M]$ and composed with the slant map (Definition~\ref{defn:slant-map})
yields \[\Phi_0:= \backslash \Delta_0([M]) \in \Hom_{\Z[\pi]}(C^{n-*}(\wt{M}),C_*(\wt{M})).\]
In the case $n=3$ we have that $\Phi_0$ is a collection of $\Z[\pi]$-module homomorphisms of the form:
\[\xymatrix @C+1cm@R0.65cm{
C^0 \ar[r]^{\partial^*_1} \ar[d]^(0.45){\Phi_0}& C^1 \ar[r]^{\partial^*_2} \ar[d]^{\Phi_0} & C^2 \ar[d]^(0.45){\Phi_0} \ar[r]^{\partial^*_3} & C^3 \ar[d]^(0.45){\Phi_0}\\
C_3 \ar[r]_{\partial_3} & C_2 \ar[r]_{\partial_2} & C_1 \ar[r]_{\partial_1} & C_0}\]
A symmetric structure also consists of higher chain homotopies $\Phi_s \colon C^r \to C_{n-r+s}$ which measure the failure of $\Phi_{s-1}$ to be symmetric on the chain level. We will introduce the higher symmetric structures next, using the higher diagonal approximation maps.

\begin{definition}\label{Defn:higherdiagonalmaps}
A \emph{chain diagonal approximation} is a chain map $\Delta_0 \colon C_* \to C_* \otimes C_*$, with a collection, for $i \geq 1$, of chain homotopies $\Delta_i \colon C_* \to C_* \otimes C_*$ between $\Delta_{i-1}$ and $T\Delta_{i-1}$.  That is, the $\Delta_i$ satisfy the relations:
\[\partial \Delta_i - (-1)^i\Delta_i\partial = \Delta_{i-1} + (-1)^iT\Delta_{i-1}. \]
\end{definition}
The higher $\Delta_i$ give rise to the entire symmetric structure on a chain complex, as in the next definition.

\begin{definition}[Symmetric Poincar\'{e} chain complex]\label{Defn:Qgroups}
Given a finitely generated projective chain complex $C_*$ over a ring $R$, let $\Phi$ be a collection of $R$-module homomorphisms
\[\{\Phi_s \in \Hom_R(C^{n-r+s},C_r) \mid r \in \Z, s \geq 0\}\]
such that:
\[d_C\Phi_s + (-1)^r \Phi_s\delta_C + (-1)^{n+s-1}(\Phi_{s-1}+(-1)^sT\Phi_{s-1}) = 0 \colon C^{n-r+s-1} \to C_r\]
where $\Phi_{-1} = 0$.
Then $\Phi$, up to an appropriate notion of equivalence (see \cite[Part~I, section~1]{Ranicki3} for details) is called an $n$-dimensional symmetric structure.  We call $(C_*, \Phi)$ an $n$-dimensional symmetric \emph{Poincar\'{e}} complex if the maps $\Phi_0 \colon C^{n-r} \to C_r$ form a chain equivalence.  In particular this implies that they induce isomorphisms (the cap products) on homology:
\[[\Phi_0] \colon H^{n-r}(C) \xrightarrow{\simeq} H_r(C).\]
\end{definition}

The symmetric construction, which is the process by which a manifold gives rise to a symmetric chain complex, as in the next proposition, appears in \cite[Part~II,~Proposition~2.1]{Ranicki3}.

\begin{proposition}
  A closed oriented $n$-dimensional manifold $M$ gives rise to a symmetric Poincar\'{e} chain complex
 \[\big(C := C_*\big(\wt{M}\big), \Phi_i := \backslash\Delta_i([M])\big),\]
unique up to chain homotopy equivalence.
\end{proposition}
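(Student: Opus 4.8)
The plan is to verify that the construction $C := C_*(\wt M)$, $\Phi_i := \backslash\Delta_i([M])$ is well-defined, produces a genuine symmetric structure, is Poincar\'e, and is independent of choices up to chain homotopy equivalence. I would organise the argument around the algebraic translation of the topological diagonal map and its higher homotopies.

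First I would fix a chain diagonal approximation on the singular (or cellular) chains of $\wt M$, i.e.\ a $\pi$-equivariant chain map $\wt\Delta_0$ together with higher homotopies $\wt\Delta_i$ satisfying the relations of Definition~\ref{Defn:higherdiagonalmaps}; the existence of such a family follows by the standard acyclic models / Eilenberg--Zilber argument applied $\pi$-equivariantly on $\wt M$, since the $\wt\Delta_i$ on the $\Z[\pi]$-free chain complex $C_*(\wt M)$ only need to be constructed over models that are contractible. Descending to $M$ and evaluating on a fundamental cycle $[M] \in C_n(M;\Z)$, and then applying the slant isomorphism of Definition~\ref{defn:slant-map}, produces the maps $\Phi_i := \backslash\,\Delta_i([M]) \in \Hom_{\Z[\pi]}(C^{n-r+i},C_r)$.

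The key step is then a bookkeeping computation: one feeds the defining relations $\partial\wt\Delta_i - (-1)^i\wt\Delta_i\partial = \wt\Delta_{i-1} + (-1)^i T\wt\Delta_{i-1}$, evaluated on $[M]$, through the slant map, keeping track of the signs in Definitions~\ref{Defn:signsoontensor}, \ref{defn:hom-chain-complex}, \ref{defn:dual-complex} and the transposition $T$, and checks that the resulting identities are exactly the relations $d_C\Phi_s + (-1)^r\Phi_s\delta_C + (-1)^{n+s-1}(\Phi_{s-1}+(-1)^sT\Phi_{s-1}) = 0$ of Definition~\ref{Defn:Qgroups}. The fact that $\partial[M]=0$ is what makes $\Phi_0$ a chain map (the $s=0$ relation with $\Phi_{-1}=0$), and the chain-homotopy relations for the $\wt\Delta_i$ give the higher relations. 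For the Poincar\'e property one invokes that $[M]$ is a fundamental class: cap product with $[M]$ induces the Poincar\'e duality isomorphisms $H^{n-r}(\wt M)\cong H_r(\wt M)$, and since $\Phi_0$ is a map of finitely generated projective (indeed free) $\Z[\pi]$-complexes inducing homology isomorphisms, it is a chain equivalence; this last implication is the standard fact that a homology equivalence between bounded complexes of projectives is a chain homotopy equivalence.

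Finally, uniqueness up to chain homotopy equivalence follows from two observations: any two choices of the equivariant diagonal approximation family $\{\wt\Delta_i\}$ are related by higher homotopies (again by acyclic models), which translates into an equivalence of symmetric structures in the sense of \cite[Part~I, section~1]{Ranicki3}; and any two CW or handle structures on $M$, hence any two choices of $C_*(\wt M)$ and of fundamental cycle $[M]$, are related by a chain homotopy equivalence compatible with the diagonal up to homotopy. I expect the main obstacle to be purely the sign analysis in the slant-map translation of the higher-homotopy relations — getting $(-1)^r$, $(-1)^{n+s-1}$ and the $T$-signs to match requires care, though it is the routine verification already flagged in \cite[Part~II, Proposition~2.1]{Ranicki3}, to which one can ultimately defer.
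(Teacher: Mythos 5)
Your outline is correct and is essentially the same route the paper takes: the paper gives no independent argument, simply invoking Ranicki's symmetric construction \cite[Part~II,~Proposition~2.1]{Ranicki3}, which is precisely the equivariant acyclic-models construction of the higher diagonals, the slant-map translation with its sign bookkeeping, and the Poincar\'e duality argument you sketch (and to which you also ultimately defer).
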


\section{The symmetric Poincar\'e chain complex of zero-framed surgery on a knot}\label{section:example-knot-exteriors}

In this section, given a diagram of an oriented knot $K$ in $S^3$, we give an algorithm to construct an explicit symmetric Poincar\'{e} chain complex for the zero-framed surgery manifold $M_K$, with coefficients in $\Z[\pi]$, where $\pi = \pi_1(M_K)$.

The organisation of this section is as follows.
In Section~\ref{section:handle-decomposition-knot-exterior} we describe a handle decomposition for the zero surgery on a knot.
In Section~\ref{section:chaincomplex} we use this to explicitly describe a cellular chain complex for the universal cover of the zero surgery on the knot, that is the cellular chain complex $C_*(M_K;\Z[\pi])$.  The only part of these sections which is not well-known is the description of the boundary maps corresponding to the attaching of 3-handles, though see \cite{Igusa-Orr:2001} for similar arguments.  Nevertheless all this material is crucial for the description and justification of the formulae for the symmetric structures in Section \ref{section:trotters-formulae}, as well as necessary fixing of the notation.

Much of the material of this section is a retract of material from the PhD thesis of the second author~\cite{Powellthesis}.
With some work, the same construction could enable us to start instead with a non-split diagram of a link, and one can easily modify the construction to use any integral surgery coefficient instead of zero.  Thus this procedure can be generalised to give the symmetric chain complex for any $3$-manifold.

\subsection{A handle decomposition from the Wirtinger presentation}\label{section:handle-decomposition-knot-exterior}

A knot diagram determines a graph in $S^2$ by forgetting crossing information.
A knot diagram is \emph{reduced} if there does not exist a region in the associated graph which abuts itself at a vertex.

\begin{definition}\label{Defn:quaddecomp}
A reduced knot diagram with a nonzero number of crossings determines a \emph{quadrilateral decomposition} of $S^2$, an expression of $S^2$ as a union of 4-sided polyhedra.  This is equivalent to a graph in $S^2$ whose complementary regions each have four edges.  A reduced knot diagram determines such a graph as the dual graph to the graph defined by the knot diagram.
  Put a vertex in each region of the graph determined by the knot, and then join a pair of the new vertices with an edge if the original regions were separated by an edge in the knot diagram graph.
\begin{figure}[h!]
 \begin{center}
 \hspace{-2cm}\includegraphics [width=7cm] {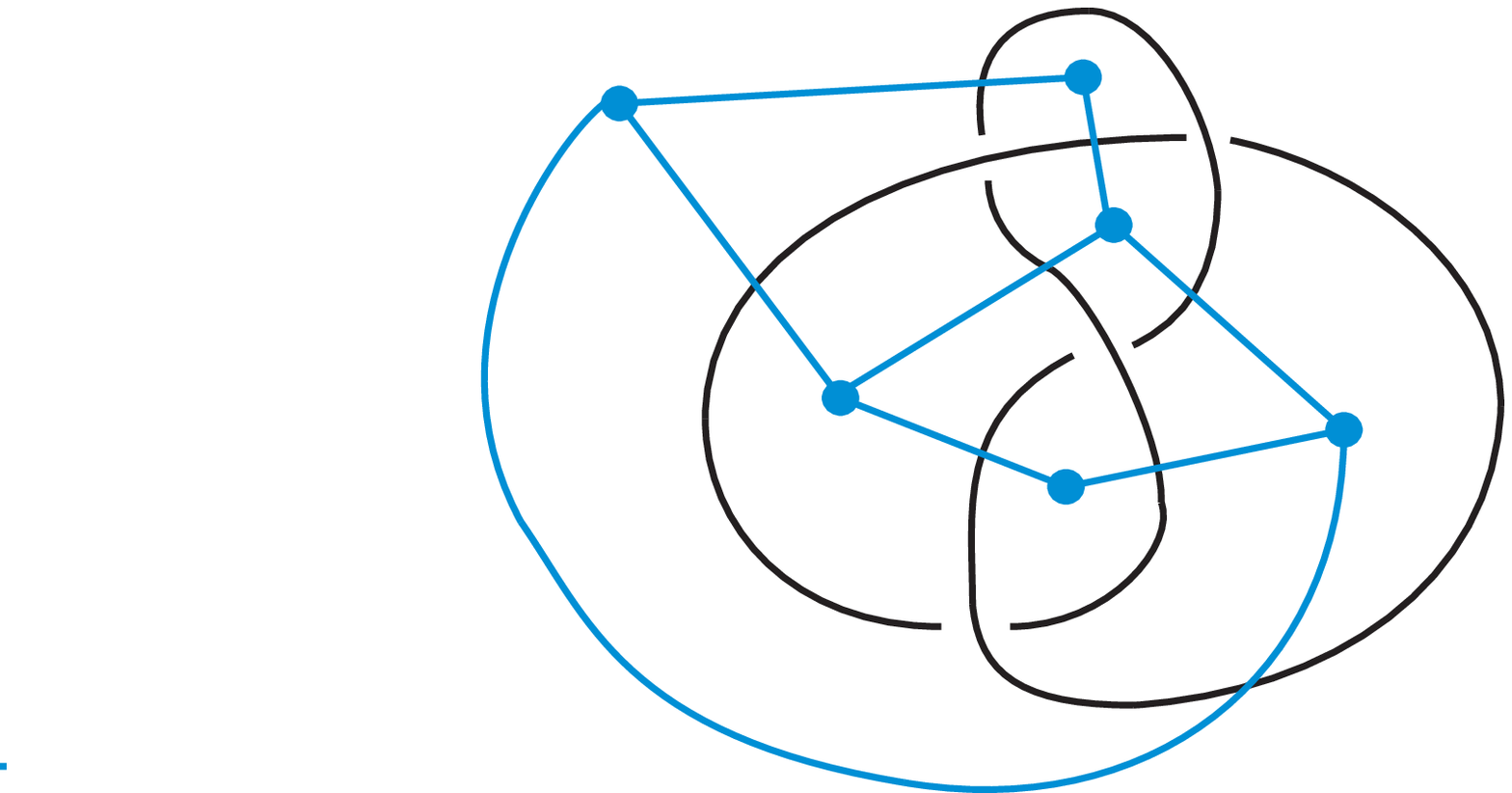}
 \caption {Quadrilateral decomposition for a diagram of the figure eight.}
 \label{Fig:quaddecomp}
 \end{center}
\end{figure}
  Each region complementary to the dual graph then has a single crossing in its interior, and since the original graph is four-valent, each region is a quadrilateral.
\end{definition}

For a knot $K \subset S^3$, we denote the \emph{knot exterior} $S^3 \sm \nu K$ by $X_K$ and the zero-surgery of $S^3$ along $K$ by $M_K$. Next, we show that one can construct a handle decomposition for the zero surgery using a number of handles proportional to the crossing number. The proof uses an explicit construction, which will enable us later to algorithmically produce the symmetric chain complex.

\begin{construction}\label{Thm:includingboundary}
Given a reduced diagram for a knot $K \subset S^3$, with $c \geq 3$ crossings, there is a handle decomposition of the zero-surgery $M_K$
 with the following handles:
\[M_K\,\, = \,\,h^0\, \cup\, \smcup{i=1}{c}\, h_i^1 \,\cup\, \smcup{j=1}{c+1}\,h_j^2 \,\cup\, \smcup{k=1}{2}\,h_k^3. \]
\end{construction}

We need to fix some conventions before we begin the construction. Choose an enumeration of the crossings in the diagram, and therefore of the regions of the quadrilateral decomposition satisfying the following condition.  For $i=1,\dots,c-1$, from crossing $i$, walk along the over-strand in the direction of the orientation.  The next over-crossing arrived at must be numbered $i+1$.  For $i=c$, the next crossing must be the crossing numbered~$1$.
 We will use the following terminology during the proof.

\begin{definition}
Let $\eps_j \in \{-1,+1\}$ be the sign of the $j$th crossing of a knot diagram.  The writhe of the diagram is $\Wr := \sum_{j=1}^c \, \eps_j.$
\end{definition}

\noindent\textit{Description of Construction~\ref{Thm:includingboundary}.}
Divide $S^3$ into an upper and lower hemisphere: $S^3 \cong D_-^3 \cup_{S^2} D_+^3$.  Let the knot diagram be in $S^2$, and arrange the knot itself to be close to its image in the diagram in $S^2$ but all contained in $D_+^3$.  Let $D_-^3$ be $h^0$, the 0-handle.  Attach $1$-handles which start and end at the $0$-handle and go over the knot, one for each edge of the quadrilateral decomposition of $S^2$.  The feet of each $1$-handle should  be contained in small discs around the vertices of the quadrilateral decomposition.

There are $c$ regions and therefore $2c$ edges and currently $2c$ 1-handles.  Now, for each crossing, attach a 2-handle which goes between the strands of the knot, so that the 1-handles that go over the under crossing strand and this 2-handle can be amalgamated into a single 1-handle by handle cancellation. There are now $c$ 1-handles.  Enumerate the 1-handles, labelling them $h^1_1,\dots,h^1_c$.  Figure~\ref{Fig:bigcrossing3} shows the final configuration on 1-handles at each crossing.  In Figure~\ref{Fig:bigcrossing3}, the 1-handles associated to crossing $i$ are labelled $h^1_{i_1}$, $h^1_{i_2}$ and $h^1_{i_3}$.  This defines, for each $i=1,\dots,c$, three numbers $i_1$, $i_2$ and $i_3$.  We choose our enumeration so that $1_1=1$.

\begin{figure}[h]
  \begin{center}
    {
    \psfrag{C}{$h^1_{i_1}$}
    \psfrag{D}{$h^1_{i_2}$}
    \psfrag{B}{$h^1_{i_3}$}
    \includegraphics[width=9cm]{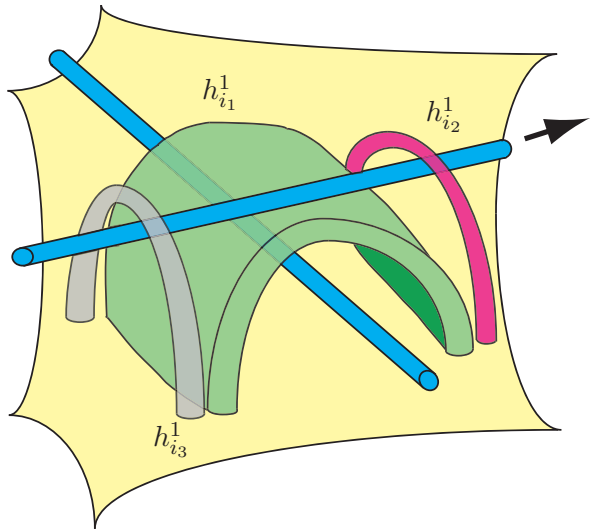}}
  \caption{The 1-handles.}
  \label{Fig:bigcrossing3}
  \end{center}
\end{figure}

The next step is to attach 2-handles.  For each crossing, and therefore region of the quadrilateral decomposition of $S^2$, we glue a 2-handle on top of the knot, with boundary circle going around the 1-handles according to the boundary of the region of $S^2$, as shown in Figure \ref{Fig:bigcrossingfinish}.

\begin{figure}[h]
  \begin{center}
    \includegraphics[width=10cm]{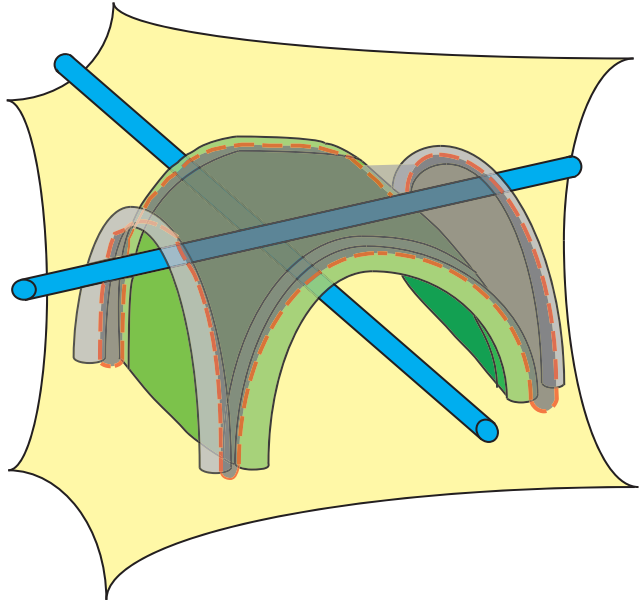}
  \caption{The 2-handle attachment.}
  \label{Fig:bigcrossingfinish}
  \end{center}
\end{figure}

Finally, after a 2-handle is attached over each crossing of the knot, we have $c$ 2-handles, and the upper boundary of the 2-skeleton is again homeomorphic to $S^2$.  This means that we can attach a 3-handle $h^3_1$ to fill in the rest of $S^3$. This completes the description of a handle decomposition of $X_K$.
We pause briefly to observe that the handle decomposition of $X_K$ constructed so far, in particular the attaching maps of the 2-handles, can be used to read off the well known Wirtinger presentation of the fundamental group.

\begin{proposition}[Wirtinger presentation]\label{prop:wirtingerpresentation}
Define relators
\[r_i :=
\begin{cases}
g_{i_2}^{-1}g_{i_1}^{-1}g_{i_3}g_{i_1} & \text{if crossing }i \text{ is of sign }+1;\\
g_{i_2}^{-1}g_{i_1}g_{i_3}g_{i_1}^{-1} & \text{if crossing }i \text{ is of sign }-1.
\end{cases}\]
The $r_i$ give rise to a presentation for the fundamental group of $X_K$
\[\pi_1(X_K) \cong \langle \; g_1,\dots,g_{c}\,|\, r_1,\dots,r_c\,\rangle.\]
\end{proposition}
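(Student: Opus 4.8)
The plan is to read the presentation off directly from the handle decomposition of $X_K$ built above, using the standard fact that the fundamental group of a connected handlebody assembled from a single $0$-handle depends only on the $1$- and $2$-handles: there is one generator per $1$-handle, one relator per $2$-handle, and the relator attached to a $2$-handle is the word in the generators spelled out by its attaching circle as it runs over the $1$-handles (handles of index $\geq 3$ do not affect $\pi_1$). This is a direct application of van Kampen's theorem, or equivalently of the algebraic chain-complex description of the handle decomposition that will be set up in the next section. Concretely, the $0$-handle together with the $1$-handles $h^1_1,\dots,h^1_c$ is homotopy equivalent to a wedge of $c$ circles; orienting the core of $h^1_i$ so that the dual loop is the meridian $g_i$ of the corresponding strand of the knot, we obtain the free group $\langle g_1,\dots,g_c\rangle$, and it remains only to identify the $c$ relators coming from the $2$-handles attached over the crossings.

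First I would fix, once and for all, the local picture at the $i$th crossing from Figure~\ref{Fig:bigcrossing3}: the over-strand contributes the single $1$-handle $h^1_{i_1}$, while the two halves of the broken under-strand contribute $h^1_{i_2}$ and $h^1_{i_3}$, labelled compatibly with the orientation of the knot and with the crossing sign $\eps_i$. Then I would trace the attaching circle of the $2$-handle $h^2_i$ glued over that crossing, as drawn in Figure~\ref{Fig:bigcrossingfinish}: running along the boundary of the associated quadrilateral region of the quadrilateral decomposition of $S^2$, the circle crosses the $1$-handles in the cyclic order $h^1_{i_2}, h^1_{i_1}, h^1_{i_3}, h^1_{i_1}$, and reading off the orientations (which are governed by $\eps_i$) yields the word $g_{i_2}^{-1}g_{i_1}^{-1}g_{i_3}g_{i_1}$ when $\eps_i=+1$ and $g_{i_2}^{-1}g_{i_1}g_{i_3}g_{i_1}^{-1}$ when $\eps_i=-1$, up to cyclic permutation and inversion — neither of which changes the normal closure. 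This is precisely the relator $r_i$.

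Finally I would observe that the presentation $\langle g_1,\dots,g_c \mid r_1,\dots,r_c\rangle$ so obtained is literally the Wirtinger presentation determined by the diagram: the $g_i$ are meridians of the arcs of the knot (between consecutive under-crossings), and $r_i$, rewritten as $g_{i_2} = g_{i_1}^{\mp 1} g_{i_3} g_{i_1}^{\pm 1}$, is the standard conjugation relation at a crossing. Since $X_K = S^3 \sm \nu K$ and the $3$-handle $h^3_1$ leaves $\pi_1$ unchanged, this computes $\pi_1(X_K)$.

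The only genuinely diagram-dependent — hence the only nontrivial — step is the middle one: checking that the attaching circle of $h^2_i$ spells exactly $r_i$, with the exponents keyed correctly to the crossing sign $\eps_i$ and to the labelling $i_1,i_2,i_3$ of the three $1$-handles in Figure~\ref{Fig:bigcrossing3}. This is a careful but routine bookkeeping of orientations in Figures~\ref{Fig:bigcrossing3} and~\ref{Fig:bigcrossingfinish}; everything else is formal.
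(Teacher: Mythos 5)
Your proposal is correct and follows essentially the same route as the paper: the presentation is read off the handle decomposition of Construction~\ref{Thm:includingboundary}, with the $1$-handles giving meridional generators, the attaching circles of the $2$-handles over the crossings giving exactly the Wirtinger relators (the orientation bookkeeping being exactly what Figures~\ref{Fig:bigcrossing3} and~\ref{Fig:bigcrossingfinish} encode), and the $3$-handle not affecting $\pi_1$. The paper asserts this with no more detail than you provide, deferring the full verification to the figures and the second author's thesis, so your level of justification matches the paper's.
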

One of the relators could be cancelled, but we do not want to do this, for reasons related to constructing the symmetric structure later.

Now we complete our decomposition of $M_K$ by attaching one more 2-handle, $h^2_{c+1}= h^2_s$, and another 3-handle, $h^3_2= h^3_s$, to the boundary of $X_K$.
To attach the final 2-handle, we need to see how the longitude lives in our handle decomposition.  Look again at Figure \ref{Fig:bigcrossingfinish}, and imagine the longitude as a curve following the knot, just underneath it.
Since the writhe of the diagram is potentially non-zero, in order to have the zero-framed longitude, we take it to wind $(-\Wr)$ times around the knot, underneath the tunnel created by the 1-handle $h^1_1$.  Then deform the longitude towards the 0-handle $h^0_o$, everywhere apart from underneath $h^1_1$. We see that at the over-strand of crossing $j$, the longitude follows the 1-handle $h^1_{j_1}$, respecting the orientation if $\eps_j = 1$ and opposite to the orientation if $\eps_j=-1$.  As it follows under-strands, we deform it to the 0-handle, so these have no contribution to the longitude as a fundamental group element.  However, within the tunnel underneath $h^1_1$, we act differently, and instead deform the longitude outwards to see that it follows $h^1_1$, $(-\Wr)$ times.  A word for the longitude, as an element of $\pi_1(X_K)$, in terms of the Wirtinger generators, is
\begin{align}\label{eqn:longitudeword}
\ell = g_1^{-\Wr}g_{k_1}^{\eps_k}g_{(k+1)_1}^{\eps_{k+1}}\dots g_{(k+c-1)_1}^{\eps_{k+c-1}}.\end{align}
Here $k$ is the number of the crossing reached first as an over crossing, when starting on the under crossing strand of the knot which lies in region 1; the indices $k, k+1,\dots,k+c-1$ are to be taken mod $c$, with the exception that we prefer the notation $c$ for the equivalence class of $0 \in \Z_c$. Finally, after attaching this 2-handle to $\partial X_K= T^2$, we have a boundary $S^2$ that can be capped off with a 3-handle $h^3_2= h^3_s$ to fill in the rest of $M_K$.  This completes Construction~\ref{Thm:includingboundary}.\qed

\subsection{A cellular chain complex of the universal cover of a knot exterior}\label{section:chaincomplex}

In this section we use our handle decomposition of a zero-surgery $M_K$ from the previous section to write down a cellular chain complex $C_*(M_K;\Z[\pi])$ of the universal cover, where $\pi:= \pi_1(M_K)$.  Note that a handle decomposition gives rise to a CW structure on a homotopy equivalent space, and by a slight abuse of notation we refer to the cells by the same symbols.  We could have worked with a cell decomposition from the outset, but we find handles easier to visualise and therefore find it easier to verify that we obtain a chain complex for the correct space.  We only ever work with symmetric chain complexes up to chain equivalence, so nothing is lost by passing to a homotopy equivalent space.

We will use the free differential calculus and the notion of an identity of the presentation to give the formulae for the boundary maps.
An element of $\pi_1(M_K)$ is represented by a word $w$ in $F$, the free group on $g_1,\dots,g_c$.  This in turn determines a path in the 1-skeleton of the universal cover $\wt{M_K}^{(1)}$, which in the case $w=r_i$ is a lift of the attaching circle of a 2-handle $h^2_i$.  The free differential calculus, due to Fox \cite{Fox2}, is a formalism that tells us which chain this path is in $C_1(M_K;\Z[\pi])$.

\begin{definition}\label{freederivative}
The \emph{free derivative} with respect to a generator $g_i$ of a free group $F$ is a map $\frac{\partial}{\partial g_i} \colon F \to \Z[F]$ defined inductively, using the following rules:
\[\lmfrac{\partial (1)}{\partial g_j} = 0; \;\;\; \lmfrac{\partial g_i}{\partial g_j} = \delta_{ij}; \;\;\; \lmfrac{\partial(uv)}{\partial g_j} = \lmfrac{\partial u}{\partial g_j} + u \lmfrac{\partial v}{\partial g_j}.\]
Extend this linearly to make the free derivative into a function $\Z[F] \to \Z[F]$.
\end{definition}

\begin{definition}\label{identitypresentation}
Let $G$ be a group with presentation $\langle g_1, \dots, g_c | r_1, \dots r_d \rangle.$
Let $F$ be the free group with generators $g_1, \dots, g_c$.
Let $P$ be the free group on letters $\rho_1,\dots,\rho_{d}$, and let $\psi\colon P \ast F \to F$ be the homomorphism such that $\psi(\rho_i)=r_i$ and $\psi(g_j)=g_j$.
 An \emph{identity of the presentation} is a word in $\ker(\psi) \leq P \ast F$ that can be written as a product of words of the form $w \rho_j^{\eps} w^{-1}$, where $w \in F$, $j \in \{1, \dots, d\}$, and $\eps \in\{\pm1\}$.
\end{definition}

\begin{conventions}\label{conventions2}
Our chain groups are based free left $\Z[\pi]$-modules.  We denote the module freely generated by a basis $e_1,\dots,e_m$ by $\ll e_1,\dots,e_m \rr$.
We define module homomorphisms only on the basis elements of a free module, and use the left $\zp$ module structure to define the map on the whole module.  This has the effect, in the non-commutative setting, that when we want to formally represent elements of our based free modules as vectors with entries in $\Z[\pi]$ detailing the coefficients, then the vectors are written as row vectors, and the matrices representing a map must be multiplied on the right.  This is because the order of multiplication of two matrices should be preserved when multiplying elements to calculate the coefficients.
\end{conventions}

The handle decomposition from Section~\ref{section:handle-decomposition-knot-exterior} gives rise to the chain complex described in detail in the next theorem.
The set up is as follows.
Let $K$ be a knot with zero-surgery $M_K$ and suppose we have a reduced knot diagram for $K$ with $c \geq 3$ crossings.  Denote the free group on the letters $g_1,\dots,g_c$ by $F=F(g_1, \dots ,g_c)$, and let $\ell \in F$ be the word for the longitude defined in Construction~\ref{Thm:includingboundary}. Recall that
\[\ell = g_1^{-\Wr}g_{k_1}^{\eps_k}g_{(k+1)_1}^{\eps_{k+1}}\dots g_{(k+c-1)_1}^{\eps_{k+c-1}},\]
where $k$ is the number of the crossing reached first as an over crossing, when starting on the under crossing strand of the knot which lies in region 1; the indices $k, k+1,\dots,k+c-1$ are to be taken mod $c$, using the representative $c$ for the equivalence class of $0 \in \Z_c$.  The sign of crossing $j$ is $\eps_j$ and $\Wr$ is the writhe of the diagram, which is the sum of the $\eps_j$.

There is a presentation $\pi = \pi_1(M_K) = \langle\,g_1,\dots,g_c, \,|\,r_1,\dots,r_c, r_s=\ell\,\rangle$
with the Wirtinger relators $r_1,\dots,r_c \in F(g_1,\dots,g_c)$ read off from the knot diagram, and $\ell$ as above.

\begin{theorem}\label{Thm:mainchaincomplex}
The cellular chain complex $C_*(M_K;\Z[\pi])$ corresponding to the handle decomposition of Construction~\ref{Thm:includingboundary} is given below.  The correspondence of the free module basis elements to the handles from Construction~\ref{Thm:includingboundary} is also given.  More precisely, each handle corresponds to a cell in a CW complex that is homotopy equivalent to $M_K$, and the cells correspond to basis elements.  Recall from Conventions~\ref{conventions2} that matrices multiply on row vectors on the right.
\[\underset{\cong \langle h^3_1,h^3_{s} \rangle}{\underbrace{ \bigoplus_{2}{}\,\Z[\pi] }}\,\,
\xrightarrow{\partial_3}\,\,
\underset{\cong \langle h^2_1,\dots,h^2_c,h^2_s \rangle }{\underbrace{\bigoplus_{c+1}{}\,\Z[\pi] }}\,\, \xrightarrow{\partial_2}\,\,
\underset{\cong \langle h^1_1,\dots,h^1_c \rangle }{\underbrace{\bigoplus_{c}{}\,\Z[\pi]  }}\,\, \xrightarrow{\partial_1}\,\,
\underset{\cong \langle h^0 \rangle}{\underbrace{ \Z[\pi] }}
\]
where:
\[\ba{rcl} \partial_3 &=& \left(
               \begin{array}{cccccc}
                 \hfill w_1 &  & \hdots &  &\hfill w_c & 0  \\
                 -u_1 &  & \hdots &  & -u_c & g_1-1 \\
               \end{array}
             \right);\\ & & \\
\partial_2 &=& \left(
                 \begin{array}{ccccc}
                   \left(\partial r_1/\partial g_1\right) &  & \hdots &  & \left(\partial r_1/\partial g_c\right)\\
                    &  &  &  &  \\
                   \vdots &  & \ddots &  & \vdots \\
                    &  &  &  &    \\
                   \left(\partial r_c/\partial g_1\right) &  & \hdots &  & \left(\partial r_c/\partial g_c\right) \\
                   \left(\partial \ell/\partial g_1\right) &  & \hdots &  & \left(\partial \ell/\partial g_c\right)\\
                 \end{array}
               \right);\text{ and} \\ & & \\
\partial_1 &=& \left(
               \begin{array}{ccccccc}
                 g_1 - 1 &  & \hdots &  & g_c - 1\\
               \end{array}
             \right)^T. \ea\]

The words $u_{k+i}$ in $\partial_3$ are given by $g_1^{1-\Wr}$ followed by the next $i$ letters in the word for the longitude:
\[u_{k+i} = g_1^{1-\Wr}g_{k_1}^{\eps_k}g_{(k+1)_{1}}^{\eps_{k+1}}\dots g_{(k+i)_1}^{\eps_{k+i}}.\]

To determine the words $w_i$ arising in $\partial_3(h^3_1)$, consider the quadrilateral decomposition of the knot diagram (Definition \ref{Defn:quaddecomp}).  At each crossing $i$, we have an  edge that we always list first in the relation, $g_{i_2}$.  Choose the vertex, call it $v_i$, which is at the end of $g_{i_2}$ (corresponding to the handle $h^1_{i_2}$ in Figure~\ref{Fig:bigcrossing3}).  For crossing $i$, choose a path in the 1-skeleton of the quadrilateral decomposition from $v_1$ to $v_i$.  This yields a word $w_i$ in $g_1,\dots,g_c$.  Then the component of $\partial_3(h^3_1)$ along $h^2_i$ is $w_i$.
\end{theorem}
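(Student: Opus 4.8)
The plan is to verify directly that the claimed sequence of $\Z[\pi]$-module maps arises as the cellular chain complex of the universal cover of the CW structure dual to the handle decomposition built in Construction~\ref{Thm:includingboundary}, and that it is indeed a chain complex (i.e.\ $\partial_i\partial_{i+1}=0$). The bases correspond to handles as indicated: one $0$-cell $h^0$, the $c$ one-handles $h^1_1,\dots,h^1_c$, the $c+1$ two-handles $h^2_1,\dots,h^2_c,h^2_s$, and the two three-handles $h^3_1,h^3_s$. First I would treat $\partial_1$: each $1$-handle $h^1_j$ is a loop based at $h^0$ representing the Wirtinger generator $g_j$, and the standard formula for the boundary of a $1$-cell in the universal cover (the lift runs from the base point to its $g_j$-translate) gives the column vector with entries $g_j-1$. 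Next, $\partial_2$ on the Wirtinger $2$-handles $h^2_i$: the attaching circle of $h^2_i$ reads off the relator $r_i$ in the $1$-skeleton, and by the fundamental identity of Fox calculus relating a word to the chain it traces out, the coefficient along $h^1_j$ is exactly the free derivative $\partial r_i/\partial g_j$; similarly the row of $\partial_2$ for $h^2_s$ records the longitude relator $r_s=\ell$, yielding the entries $\partial\ell/\partial g_j$. That $\partial_1\partial_2=0$ is then the classical fact that $\sum_j(\partial r/\partial g_j)(g_j-1)=r-1$, which vanishes in $\Z[\pi]$ since each $r_i$ and $\ell$ is trivial in $\pi$.

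The substantive part is $\partial_3$. The three-handle $h^3_s$ caps off the torus boundary $\partial X_K$ after the longitudinal $2$-handle $h^2_s$ is attached; its attaching $2$-sphere sweeps across $h^2_s$ once and across the meridional $2$-handle machinery of $X_K$ in a controlled way, producing the $h^2_s$-coefficient $g_1-1$ (the meridian acting on the capping cell) and coefficients $-u_i$ along the $h^2_i$, where $u_{k+i}$ is the prefix of the longitude word as stated. The three-handle $h^3_1$ fills in the complementary $3$-ball in $S^3$ after all $c$ Wirtinger $2$-handles are attached; its boundary $2$-sphere is tiled by the quadrilateral regions of the diagram, so it crosses each $h^2_i$ exactly once, with the coefficient $w_i\in\Z[\pi]$ determined by a path in the $1$-skeleton of the quadrilateral decomposition from the chosen base vertex $v_1$ to the vertex $v_i$ at the end of $g_{i_2}$ (this path-dependence is exactly the ambiguity in lifting a $2$-cell of the universal cover to a preferred sheet, and any two choices differ by something in the image of $\partial$, so the complex is well-defined up to the allowed equivalence). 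The $h^3_1$-coefficient along $h^2_s$ is $0$ because $h^3_1 \subset X_K$ does not touch the longitudinal handle.

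I would then verify $\partial_2\partial_3=0$. The first row forces $\sum_i w_i\,(\partial r_i/\partial g_j)=0$ for each $j$: this is precisely the statement that the product $\prod_i w_i\rho_i^{\pm1}w_i^{-1}$ (with signs according to crossing sign) is an \emph{identity of the presentation} in the sense of Definition~\ref{identitypresentation} — geometrically, going around the boundary of the $2$-sphere $\partial h^3_1$ through all the quadrilateral regions gives a nullhomotopic loop — and then applying the free derivative, using $\partial(wrw^{-1})/\partial g_j = w(\partial r/\partial g_j)$ modulo the augmentation ideal times $(r-1)$, kills each term. The second row $\sum_i (-u_i)(\partial r_i/\partial g_j) + (g_1-1)(\partial\ell/\partial g_j)=0$ encodes the analogous identity expressing the longitude, via the sequence of over-crossings traversed, as a product of conjugates of Wirtinger relators — this is where the precise bookkeeping of the $u_{k+i}$ as longitude-word prefixes is needed. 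The main obstacle is this last verification together with nailing down the geometric origin of the $-u_i$ and the sign conventions: one must track carefully how the $3$-handle $h^3_s$ sweeps across the $2$-handles, and reconcile the orientation of the capping sphere with the chosen lift to the universal cover. I expect that comparing the homology of the resulting complex with the known $H_*(M_K;\Z[\pi])$ (in particular $H_0\cong\Z$, $H_1$ the Alexander module data, $H_2$ and $H_3$ matching Poincaré duality for the closed $3$-manifold $M_K$) provides a useful consistency check, but the direct handle-by-handle argument is the right route since it is what Section~\ref{section:trotters-formulae} will build on.
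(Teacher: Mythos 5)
Your proposal takes essentially the same route as the paper: there, too, $\partial_1$ and $\partial_2$ are read off from the handle decomposition (with the handle-by-handle details deferred to the second author's thesis), and $\partial_3$ is derived from the correspondence between $3$-cells and identities of the presentation, namely $s_1=\prod_{k}w_{j_k}r_{j_k}w_{j_k}^{-1}$ for $h^3_1$ and $s_2=\big(\prod_{j}u_{k+j}r_{k+j}^{-1}u_{k+j}^{-1}\big)\cdot\big(g_1 r_s g_1^{-1}\big)\cdot r_s^{-1}$ for $h^3_s$, from which the two rows of $\partial_3$ follow exactly as you describe. Two small corrections to your sketch: in the identity for $h^3_1$ every relator occurs with exponent $+1$ (the crossing sign is already built into the Wirtinger relator $r_i$ and into the choice of the vertex $v_i$ at the end of $g_{i_2}$), so no signs ``according to crossing sign'' appear, in agreement with the row $(w_1,\dots,w_c,0)$; and the path-independence of $w_i$ is not merely an ``up to equivalence'' matter --- any two paths from $v_1$ to $v_i$ differ by a loop in the $1$-skeleton of the quadrilateral decomposition of $S^2$, which is a product of conjugates of Wirtinger relators and hence trivial in $\pi$, so the coefficients in $\Z[\pi]$ are literally well defined.
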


\begin{proof}
  There exists a choice of basing for the handles given in Construction~\ref{Thm:includingboundary} such that the boundary maps in the associated cellular chain complex are as given.  We refer to \cite{Powellthesis} for full details.  Here, we only elaborate on the boundary maps of the 3-cells.

Note that there is a correspondence between 3-cells and identities of a presentation.
The boundary of a 3-cell is a 2-sphere, which is attached to a collection of 2-cells.  Remove one copy of one of these 2-cells from the boundary. The remainder is a disc that consists of a union of 2-cells.  This union of 2-cells together with a choice of basing path determines a product of conjugates of relators.  Adding the inverse of the relator corresponding to the 2-cell that we removed yields an identity of the presentation.  Conversely, an identity of a presentation gives rise to a map of $S^2$ into the presentation 2-complex, to which one can attach a 3-cell.

In particular, the 3-cells of $M_K$, $h^3_1$ and $h^3_{2}$, correspond to the following identities of the presentation of $\pi_1(M_K)$, whence the $\partial_3$ map above is derived:
\begin{equation}\label{equation:identity-s-o}
  s_1 = \smprod{k=1}{c} \, w_{j_k}r_{j_k}w_{j_k}^{-1} = 1;
\end{equation}
\begin{equation}\label{equation:identity-s-partial}
 s_{2} = \bigg(\,\tmprod{j=0}{c-1}\,u_{k+j}r^{-1}_{k+j}u_{k+j}^{-1}\bigg) \cdot \big( g_1 r_s g_1^{-1} \big) \cdot r_s^{-1} = 1.
\end{equation}
The set $\{j_k \mid k =1,\dots,c\}$ is equal to $\{1,\dots,c\}$, but we will not be concerned with the precise order.
\end{proof}

\begin{remark}\label{Rmk:relativefundclass}
Passing to $\Z$ coefficients, the 3-dimensional chain
\[[M_K] := -h^3_1 - h^3_2 \in C_3(M_K;\Z) = \Z \otimes_{\Z[\pi]} C_3(M_K;\Z[\pi])\]
represents a cycle in $C_3(M_K;\Z)$.
 This is our choice of \emph{fundamental class} for the zero surgery. We shall use the image of this class under a diagonal chain approximation map in Section \ref{section:trotters-formulae} to derive the symmetric structure on the chain complex.
\end{remark}

Note that the  chain complex that we have constructed is algorithmically extractable from a knot diagram.

\subsection{Formulae for the diagonal chain approximation map}\label{section:trotters-formulae}

Trotter \cite{Trotter} gave explicit formulae, which we shall now exhibit, for a choice of diagonal chain approximation map on the 3-skeleton of a $K(\pi,1)$, given a presentation of $\pi$ with a full set of identities for the presentation. (A set of identities for a presentation is called \emph{full} if  the corresponding CW complex has trivial second homotopy group or, equivalently, if any other identity is a product of conjugates of identities in this set.)
Note that for any nontrivial knot $K$, the zero-surgery $M_K$ is an Eilenberg-MacLane space $K(\pi_1(M_K),1)$, by work of Gabai~\cite[Corollary~5]{Gabai:1986-2}.

Let $\pi$ be a group with a presentation $\langle \,g_1,\dots,g_a \mid r_1,\dots,r_b\,\rangle$ with a full set of identities
 \[\big\{s_m = \prod_{k=1}^{d_m} \, w_{j_k^m} r_{j_k^m}^{\eps_{j_k^m}} (w_{j_k^m})^{-1}\big\}_{m=1}^e\] for the presentation.
 Let $Y$ be a model for $K(\pi,1)$, and suppose that $Y$ has a CW structure that corresponds to the presentation and identities. Let $\wt{Y}$ be the universal cover of $Y$.
Let $\{h^j_i\}$ be the basis elements of the $\Z[\pi]$-modules $C_j(\wt{Y})$, for $(0 \leq j \leq 3)$, where each $h_i^1$ correspond to a $j$-cell of $Y$, and the $1$-cells correspond to the $a$ generators of $\pi$, $2$-cells correspond to the $b$ relations, and $3$-cells correspond to the $e$ identities.  Let $\a \colon F(g_1,\dots,g_a) \to C_1(\wt{Y})$ be given by $\alpha(v) = \sum_i \, \frac{\partial v}{\partial g_i} h^1_i$, using the Fox derivative (Definition \ref{freederivative}). Let $\g \colon F \to C_1(\wt{Y}) \otimes C_1(\wt{Y})$ be a map with $\g(1)=\g(g_i)=0$ that satisfies
\begin{equation}\label{gammadef}
\g(uv)=\g(u) +u\g(v) + \a(u) \otimes u\a(v).
\end{equation}
Note that in fact $\g$ is entirely determined by the choice of $\g(g_i)$ and Equation~\ref{gammadef}, as proven by Trotter in~\cite[page~472]{Trotter}.
In particular note that with $u=g_i$ and $v=g_i^{-1}$, Equation~\ref{gammadef} implies that $\g(g_i^{-1})= g_i^{-1} h^1_i \otimes g_i^{-1} h^1_i$.

\begin{example}
We give the result of the calculation of $\g$ for a typical word which arises in the Wirtinger presentation of the knot group:
\begin{eqnarray*}
\g(g_i^{-1}g_kg_jg_k^{-1}) & = & (g_i^{-1}h^1_i \otimes g_i^{-1}h^1_i) - (g_i^{-1}h^1_i \otimes g_i^{-1}h^1_k) + (h^1_k \otimes h^1_k) -\\ & & (g_i^{-1} g_k h^1_j \otimes h^1_k) + (g_i^{-1} h^1_k - g_i^{-1}h^1_i) \otimes (g_i^{-1}g_k h^1_j - h^1_k).
\end{eqnarray*}
\end{example}

\begin{theorem}\label{trotterdelta0}\cite{Trotter}
  A chain diagonal approximation map $\Delta_0 \colon C(\wt{Y}) \to C(\wt{Y}) \otimes_{\Z} C(\wt{Y})$ can be defined on the $3$-skeleton $\wt{Y}^{(3)}$ as follows.
\begin{eqnarray*}
\Delta_0(h^0) & = & h^0 \otimes h^0 \\
\Delta_0(h^1_i) & = & h^0 \otimes h^1_i + h^1_i \otimes g_i h^0 \\
\Delta_0(h^2_j) & = & h^0 \otimes h^2_j + h^2_j \otimes h^0 - \g(r_j) \\
\Delta_0(h_m^3) & = & h^0 \otimes h^3_m + h^3_m \otimes h^0 + \tmsum{k=1}{d_m} \, \eps_{j_k^m} \left(\a(w_{j_k^m}) \otimes w_{j_k^m} h^2_{j_k^m} + w_{j_k^m} h^2_{j_k^m} \otimes \a(w_{j_k^m})\right)\\
& & + \tmsum{k=1}{d_m} \delta_{j_k^m} w_{j_k^m} (h^2_{j_k^m} \otimes \alpha(r_{j_k^m})) - \tmsum{1\, \leq\, \ell \, <\, k\, \leq \, d_m}{} \eps_{j_{\ell}^m} w_{j_{\ell}^m} h^2_{j_{\ell}^m} \otimes \eps_{j_k^m} w_{j_k^m} \a(r_{j_k^m})
\end{eqnarray*}
where $\delta_{i} = \frac{1}{2}(\eps_{i} - 1)$.
\end{theorem}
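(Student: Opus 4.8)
The theorem asserts that an explicit formula defines (the restriction to the $3$-skeleton of) a chain diagonal approximation map, so my plan is a reduction followed by a direct, if lengthy, verification — essentially Trotter's.

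\textbf{Reduction.} Since $Y$ is a $K(\pi,1)$, the cellular chain complex $C_*(\wt{Y})$ is a free $\Z[\pi]$-resolution of $\Z$, and the target $D_* := C_*(\wt{Y})\otimes_{\Z}C_*(\wt{Y})$, equipped with the diagonal $\pi$-action, is as well: each chain group $C_p(\wt{Y})\otimes_{\Z}C_q(\wt{Y})$ is $\Z[\pi]$-free, because $\Z[\pi]\otimes_{\Z}\Z[\pi]$ with the diagonal action is free on $\{\, 1\otimes g \mid g\in\pi\,\}$, and $D_*$ is acyclic over $\Z$ by the Eilenberg--Zilber theorem together with the contractibility of $\wt{Y}\times\wt{Y}$. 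By the comparison theorem of homological algebra, any $\Z[\pi]$-linear chain map $C_*(\wt{Y})\to D_*$ covering $\id_{\Z}$ is chain homotopic to the topological diagonal $\wt{\Delta}_*\circ EZ$ of Section~\ref{section:symm-cxs-closed-mflds}, hence is a chain diagonal approximation map; and a $\Z[\pi]$-linear map defined through degree $3$ which commutes with $\partial$ in those degrees extends to a chain map in all degrees. So it suffices to check that the displayed formulas are $\Z[\pi]$-linear (which is built in, once one recalls that $\alpha$ is the Fox derivative of Definition~\ref{freederivative} and $\gamma$ is the well-defined function determined by $\gamma(1)=\gamma(g_i)=0$ and \eqref{gammadef}), are compatible with the augmentations in degree $0$ (immediate from $\Delta_0(h^0)=h^0\otimes h^0$), and satisfy $d_{\otimes}\Delta_0 = \Delta_0\partial$ in degrees $1$, $2$ and $3$.

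\textbf{The engine, and the low degrees.} The verification of the last point runs on three facts: the Fox fundamental formula, which here reads $\partial\alpha(w)=(w-1)h^0$ for $w\in F$; the Leibniz-type rules $\alpha(uv)=\alpha(u)+u\alpha(v)$ and \eqref{gammadef}, from which one deduces $\alpha(w^{-1})=-w^{-1}\alpha(w)$, $\gamma(w^{-1})=w^{-1}\alpha(w)\otimes w^{-1}\alpha(w)-w^{-1}\gamma(w)$, and, by induction on the number of factors, formulas for $\alpha$ and $\gamma$ of a product of conjugates of relators; and the triviality of the relators $r_j$ in $\pi$ and of the identities $s_m$ in $F$. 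Degree $0$ is trivial; in degree $1$ one expands $d_{\otimes}\Delta_0(h^1_i)$ with Definition~\ref{Defn:signsoontensor} and $\partial h^1_i=(g_i-1)h^0$ and checks that it equals $(g_i-1)(h^0\otimes h^0)=\Delta_0(\partial h^1_i)$. In degree $2$, one has $d_{\otimes}(h^0\otimes h^2_j+h^2_j\otimes h^0)=h^0\otimes\alpha(r_j)+\alpha(r_j)\otimes h^0$, while a short computation from \eqref{gammadef} gives $d_{\otimes}\gamma(r_j)=h^0\otimes\alpha(r_j)+\alpha(r_j)\otimes h^0-\Delta_0(\alpha(r_j))-\alpha(r_j)\otimes(1-r_j)h^0$; the last term vanishes because $r_j$ is trivial in $\pi$, whence $d_{\otimes}\Delta_0(h^2_j)=\Delta_0(\alpha(r_j))=\Delta_0(\partial h^2_j)$.

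\textbf{Degree $3$: the main obstacle.} Here $\partial h^3_m=\sum_{k}\eps_{j_k^m}\,w_{j_k^m}h^2_{j_k^m}$ (the boundary of the $3$-cell read off the identity $s_m$, as in Theorem~\ref{Thm:mainchaincomplex}), so $\Delta_0(\partial h^3_m)=\sum_{k}\eps_{j_k^m}\,w_{j_k^m}\bigl(h^0\otimes h^2_{j_k^m}+h^2_{j_k^m}\otimes h^0-\gamma(r_{j_k^m})\bigr)$, and this must be matched against $d_{\otimes}\Delta_0(h^3_m)$, computed by applying Definition~\ref{Defn:signsoontensor} to each of the five groups of terms in the formula for $\Delta_0(h^3_m)$ and using $\partial\alpha(w)=(w-1)h^0$ and $\partial h^2_j=\alpha(r_j)$. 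The organising device is the Leibniz expansion of the identity $0=\gamma(\psi(s_m))=\gamma\bigl(\prod_{k}w_{j_k^m}r_{j_k^m}^{\eps_{j_k^m}}w_{j_k^m}^{-1}\bigr)$, which is exactly what produces the terms $\alpha(w_{j_k^m})\otimes w_{j_k^m}h^2_{j_k^m}$, $w_{j_k^m}h^2_{j_k^m}\otimes\alpha(w_{j_k^m})$, and the double sum over $\ell<k$ in the formula for $\Delta_0(h^3_m)$; the correction coefficient $\delta_i=\tfrac12(\eps_i-1)$ appears precisely because, when $\eps_i=-1$, $\gamma(r_i^{-1})$ differs from $-\gamma(r_i)$ by the quadratic term $\alpha(r_i)\otimes\alpha(r_i)$ (again using that $r_i$ is trivial in $\pi$), and $d_{\otimes}$ applied to the $\delta_i$-term contributes exactly this; finally, all the would-be error terms, the degree-$3$ analogues of $\alpha(r_j)\otimes(1-r_j)h^0$, vanish because every partial product of conjugates of relators acts trivially on $C_*(\wt{Y})$. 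Assembling these cancellations with the correct signs and ordering is the bulk of the proof, and is where I expect the difficulty to lie; everything else is routine. This is Trotter's argument~\cite{Trotter}, which I would cite rather than reproduce in full — for our later purposes only the existence of \emph{some} chain diagonal approximation is needed, and the value of this particular one is that it is explicit and algorithmically computable.
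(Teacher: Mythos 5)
Your proposal is correct and, at bottom, takes the same route as the paper: the essential content --- the verification that the displayed formulas commute with the boundary maps, above all the degree-$3$ cancellation --- is deferred to Trotter \cite{Trotter}, exactly as in the paper's own proof. Your added scaffolding is sound: the degree-$1$ and degree-$2$ checks come out right under the convention of Definition~\ref{Defn:signsoontensor} (your expression for $d_{\otimes}\g(r_j)$ agrees with what Equation~\ref{gammadef} and the Fox fundamental formula give, using that $r_j=1$ in $\pi$), and your comparison-theorem reduction is a reasonable substitute for the paper's appeal to Davis \cite{Davis} for the higher $\Delta_i$, though ``chain homotopic to the topological diagonal'' yields a chain diagonal approximation in the sense of Definition~\ref{Defn:higherdiagonalmaps} only after transporting the higher homotopies across that chain homotopy, or by running the same freeness-plus-acyclicity argument once more. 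One caveat: the single substantive point of the paper's proof is that Trotter's implicit sign convention for the boundary of $C^t\otimes C$ differs from Definition~\ref{Defn:signsoontensor}, so his pages 475--476 cannot be cited verbatim for the statement as written; the paper reworks his computation under its own convention and records that the only change needed is the minus sign in front of $\g(r_j)$, which the statement already incorporates. Your degree-$2$ computation implicitly confirms that sign, but if you leave degree $3$ to an as-is citation of \cite{Trotter} you should at least note that the convention discrepancy has been checked not to force any further sign changes there.
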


\begin{proof}
See \cite[pages 475--6]{Trotter}, where Trotter shows that these are indeed chain maps i.e.\ that $\Delta_0 \circ \partial = d \circ \Delta_0.$
Trotter does not state his sign conventions explicitly; however, careful inspection of his calculations shows that his convention for the boundary map of $C^t \otimes C$ disagrees with ours.  We therefore undertook to rework his proof using our sign convention.  It turned out that the only change required in the formulae was a minus sign in front of $\g(r_j)$, which alteration we have already made for the statement of the theorem.
\end{proof}

We do not have explicit formulae for the higher diagonal maps $\Delta_i$, for $i=1,2,3$, but a result of J.~Davis \cite{Davis} ensures that they exist for a $K(\pi,1)$.
For our purposes we only require explicit knowledge of $\Delta_0$.

Now we specialise to the case $Y=M_K$, which as noted above is a 3-dimensional Eilenberg-Maclane space $K(\pi_1(M_K),1)$ for $K \neq U$.  Recall from Section~\ref{section:basic-chain-cx-constructions} that we obtain symmetric structure maps as \[\Phi_0 = \backslash \Delta_0([M_K]) \in \Hom_{\Z[\pi]}(C^{3-*}(M_K;\Z[\pi]),C_{*}(M_K;\Z[\pi])).\]
In practice, in our application to twisted Blanchfield pairings below, we shall use the map $\Phi_0^*$ instead of $\Phi_0$, since this is a much simpler map in Trotter's formulae of Theorem~\ref{trotterdelta0}. Therefore next we describe the map $\Phi_0^* \colon C^2 \to C_1$ explicitly.
From Trotter's formulae, we have that an identity of the presentation $\prod_k \, w_{k} r_{k}^{\eps_{k}} w_{k}^{-1}$ corresponding to a $3$-handle $h^3$ gives rise to a term in $\Delta_0([M_K])$ of the form
\[\smsum{k}{} \Big(\smsum{i}{} \eps_k \smfrac{\partial w_k}{\partial g_i} h^1_i \otimes w_k h^2_k \Big).\]
We obtain the following matrix over $\Z [\pi]$.
\[\Phi_0^*= \ol{\Phi_0}^T= \left(\begin{array}{ccccc}
 -w_1^{-1}\frac{\partial w_1}{\partial g_1} + u_1^{-1}\frac{\partial u_1}{\partial g_1} & & \hdots & & -w_1^{-1}\frac{\partial w_1}{\partial g_c} + u_1^{-1}\frac{\partial u_1}{\partial g_c} \\
 & & & &  \\
 \vdots & & \ddots & & \vdots  \\
 & & & &  \\
 -w_c^{-1}\frac{\partial w_c}{\partial g_1} + u_c^{-1}\frac{\partial u_c}{\partial g_1} & & \hdots & & -w_c^{-1}\frac{\partial w_c}{\partial g_c} + u_c^{-1}\frac{\partial u_c}{\partial g_c} \\
 g_1^{-1} & & \hdots & & 0
 \end{array} \right)\]

Hopefully the following simple example illustrating our conventions will be useful to the reader.

\begin{example}
Consider the diagram of the left handed trefoil, with arcs labelled $g_1, g_2,$ and $g_3$ as on the left of Figure~\ref{Fig:trefoilexample}. Label the crossings as in Construction~\ref{Thm:includingboundary}.
  Note that the requirement that the crossing labelled $1$ must have $g_1$ as its under crossing strand determines the rest of the crossing labels.

\begin{figure}[h]
  \begin{center}
  \includegraphics[height=3cm]{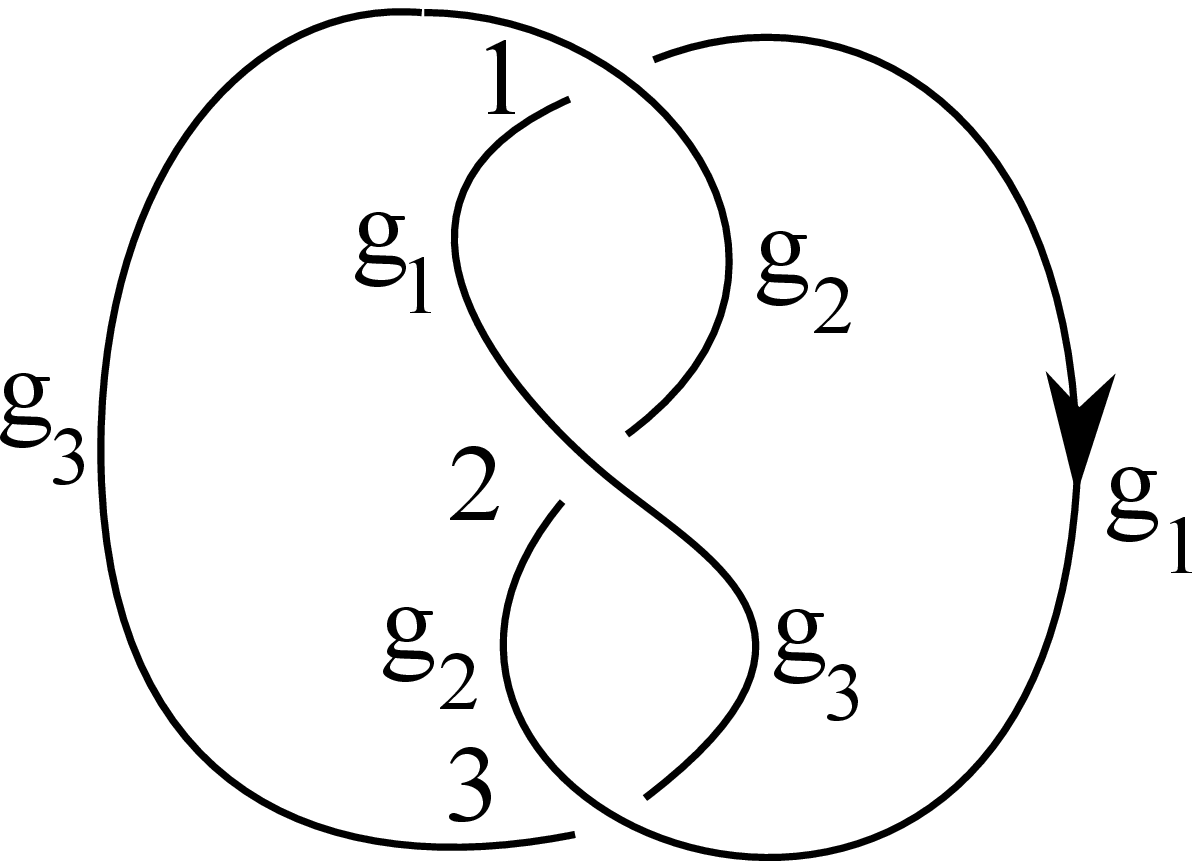}
  \qquad \qquad
   \includegraphics[height=4cm]{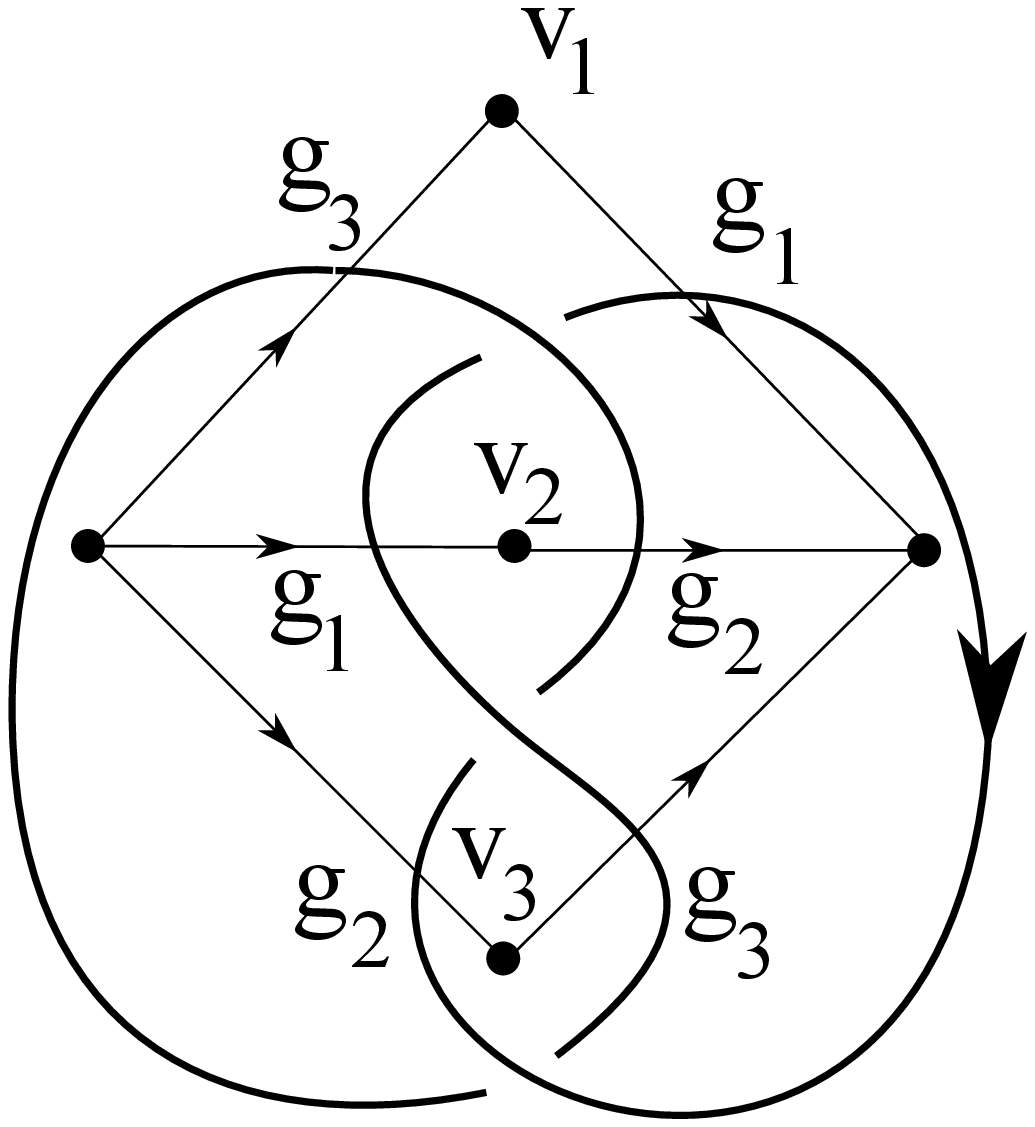}
  \caption{Computing $C_*(M_K, \Z[\pi_1(M_K)])$ for $K$ the left handed trefoil.}
  \label{Fig:trefoilexample}
  \end{center}
\end{figure}

We therefore have Wirtinger relations $r_1= g_3^{-1}g_1 g_2 g_1^{-1}$, $r_2= g_1^{-1} g_2 g_3 g_2^{-1}$, and $r_3= g_2^{-1} g_3 g_1 g_3^{-1}$,  as in Proposition~\ref{prop:wirtingerpresentation}. Since our diagram has $\Wr=-3$, we have that the zero-framed longitude is $\ell= g_1^3 g_3^{-1} g_1^{-1} g_2^{-1}$. Follow the formulae of Theorem~\ref{Thm:mainchaincomplex} to obtain words $u_1=g_1^4 g_3^{-1}$, $u_2= g_1^4 g_3^{-1} g_1^{-1} $, and $u_3=g_1^4 g_3^{-1}g_1^{-1} g_2^{-1}$.
Finally, we need to compute the words $w_i$. Consider the right side of Figure~\ref{Fig:trefoilexample}, which gives the quadrilateral decomposition corresponding to our diagram. Following the instructions of Theorem~\ref{Thm:mainchaincomplex}, we label the vertices that correspond to each crossing as shown.
 Note that in this example the vertices associated to crossings are mutually distinct, though this is not generally the case. Choose arbitrary paths from $v_1$ to $v_i$ to yield the words $w_1=1$, $w_2=g_3^{-1} g_1$, and $w_3= g_3^{-1} g_2$.
It is now a straightforward exercise in Fox calculus to write down explicit matrix representatives for $\partial_3, \partial_2, \partial_1$, and $\Phi_0^*$ according to the formulae of Theorem~\ref{Thm:mainchaincomplex} and the above equations.
\end{example}

\section{Definition of the twisted Blanchfield pairing}\label{section:defn-TBF}

\subsection{Twisted homology and cohomology groups}\label{section:twistedhomology}
Let $Y$ be a connected topological space with a base point $y_0$.  Write $\pi=\pi_1(Y,y_0)$ and denote  the universal cover of $Y$ by $p\colon \wt{Y}\to Y$. Let $Z\subset Y$ be a subspace of $Y$ and write $\wt{Z}=p^{-1}(Z)$.
The group $\pi$ acts on $C_*(\wt{Y})$ on the left. Thus we view $C_*(\wt{Y},\wt{Z})$ as a chain complex of free left $\Z[\pi]$-modules.

Let $R$ be a commutative domain with involution and let $Q$ be its field of fractions.
Here an involution on a ring $R$ is an additive self map $a \mapsto \ol{a}$ with $\ol{a\cdot b} = \ol{b}\cdot \ol{a}$, $\ol{1}=1$ and $\ol{\ol{a}}=a$.
For example, given a group $G$  we will always view $\Z[G]$ as a ring equipped with the involution $\ol{\sum_{g\in \pi}n_gg}=\sum_{g\in \pi}n_gg^{-1}$.  A left $R$-module $N$ becomes a right $R$-module using the involution, via the action $n \cdot a := \ol{a}n$ for $r\in R, n \in N$.  Denote this right module by $N^t$.  A similar statement holds with left and right switched.  We use the same notation $N^t$ in both instances.
Modules will be left modules by default.

Let $N$ be a $(R,\Z[\pi])$-bimodule.
We write
\[ \ba{rcl} C_*(Y,Z;N)&=&N\otimes_{\Z[\pi]} C_*(\wt{Y},\wt{Z}),\\
C^*(Y,Z;N)&=&\hom_{\Z[\pi]}(C_*(\wt{Y},\wt{Z})^t,N).\ea\]
These are chain complexes of left $R$-modules.
We denote the corresponding homology and cohomology modules by $H_*(Y,Z;N)$ and $H^*(Y,Z;N)$, respectively. As usual we drop the $Z$ from the notation when $Z=\emptyset$.

We will make use of the following observation.
If $\varphi\colon \pi\to \Gamma$ is a homomorphism and if  $N$ is a $(R,\Z[\Gamma])$-bimodule, then we can view $N$ as an $(R,\Z[\pi])$-module via~$\varphi$.
Furthermore, if $q\colon \what{Y}\to Y$ denotes the covering corresponding to $\ker(\varphi)$, then for  $\what{Z}=q^{-1}(Z)$ the projection map $\wt{Y}\to \what{Y}$ induces canonical isomorphisms
\[ \ba{rclcl} C_*(Y,Z;N)&=&N\otimes_{\Z[\pi]} C_*(\wt{Y},\wt{Z})&\cong & N\otimes_{\Z[\Gamma]} C_*(\what{Y},\what{Z}),\\
C^*(Y,Z;N)&=&\hom_{\Z[\pi]}(C_*(\wt{Y},\wt{Z})^t,N)&\cong&\hom_{\Z[\Gamma]}(C_*(\what{Y},\what{Z})^t,N)\ea\]
of chain complexes of left $R$-modules.


\subsection{The evaluation map}\label{section:kronecker}

We continue with the notation from the previous section.
Let $N$ and $N'$ be $(R,\Z[\pi])$-bimodules. Let $S$ be an $(R,R)$-bimodule.  Furthermore let
\[\ll -,-\rr \colon N\times N'\to S\]
be a pairing with the property that
\[ \ll ng,n'\rr =\ll n,n'g^{-1}\rr \]
for all $n\in N$, $n'\in N'$ and $g\in \pi$. Assume the pairing $\ll -,-\rr$ is sesquilinear with respect to $R$, in the sense that
\[ \ll rn,sn'\rr =r\ll n,n'\rr \ol{s}\]
for all $n\in N$, $n'\in N'$ and $r,s\in R$.
The datum of such a pairing $\ll -,-\rr$ is equivalent to the datum of an
 $(R,R)$-bimodule homomorphism  $\Theta \colon N \otimes_{\Z[\pi]} (N')^t \to S$,
 where we remind the reader that $(N')^t$  means that the $R$ and the $\Z[\pi]$ module structures have both been involuted.
It is straightforward to verify that
\[ \ba{rcl}\kappa\colon \Hom_{\op{right-}\Z[\pi]}\big(C_*(\wt{Y},\wt{Z})^t,N\big)&\to& \op{Hom}_{\op{left-}R}(N'\otimes_{\Z[\pi]} C_*(\wt{Y},\wt{Z}),S)^t\\
f&\mapsto & (n'\otimes \sigma)\mapsto  \ll n', f(\sigma) \rr\\
\ea\]
is a well-defined isomorphism of chain complexes of left $R$-modules.
The isomorphism of chain complexes above induces an isomorphism
\[\kappa \colon H^i(Y,Z;N)\xrightarrow{\cong} H_i(\op{Hom}_{\op{left} R}(N' \otimes_{\Z[\pi]} C_*(\wt{Y},\wt{Z}),S)^t)\]
of left $R$-modules.
Finally we also consider the evaluation map
\[\ev \colon H_i(\op{Hom}_{\op{left} R}(N'\otimes_{\Z[\pi]} C_*(\wt{Y},\wt{Z}),S)^t)\to \op{Hom}_{\op{left} R}(H_i(Y,Z;N'),S)^t\]
of left $R$-modules.

We will use the following special case.
Given a representation $\a\colon \pi\to \op{GL}(R,d)$, let $R^d_\alpha$ be the $R$-module $R^d$  equipped with the right $\Z[\pi]$-module structure given by right multiplication by $\alpha(g)$ on row vectors. Furthermore let $\alphas$ be the representation given by $\alphas(g)=\ol{\alpha(g^{-1})^T}$. Finally let $Q$ be the quotient field of $R$.

Now we view $Q/R \otimes_{R} R^d_{\alpha}$ as a right $\Z[\pi]$-modules where $\Z[\pi]$ acts on the second term.
The map
\begin{equation}\label{equation:pairing-for-evaluation-map}
 \ba{rcl} \ll-,-\rr \colon Q/R \otimes_{R} R^d_{\alpha} \times R^d_{\alphas} & \to & Q/R\\
(p \otimes v,w) & \mapsto & v\ol{w^T}\cdot p\ea
\end{equation}
has the desired properties, with $N = Q/R \otimes_R R^d_{\alpha}$, $N' = R^d_{\alphas}$ and $S=Q/R$. In addition, if $\a\colon \pi\to U(k)$ is a unitary representation, then $\a=\alphas$.

\subsection{The Bockstein map}\label{section:bockstein}
Let $R$ be a commutative domain, let $N$ be an $(R,\Z[\pi])$-bimodule, and let $Q$ be the quotient field of $R$.  Note that $R$, $Q$ and $Q/R$ are $(R,R)$-bimodules.
View $Q \otimes_R N$ and $Q/R \otimes_{R} N$ as $(R,\Z[\pi])$-bimodules.

Now let $C_*$ be a chain complex of free right $\Z[\pi]$-modules.
There exists a short exact sequence
\[0\to   \Hom_{\Z[\pi]}(C_*,R \otimes_{R} N)
\to \Hom_{\Z[\pi]}(C_*, Q \otimes_{R} N)\to \Hom_{\Z[\pi]}(C_*,Q/R \otimes_{R} N) \to 0\]
of $ R$-modules.\
As usual we can identify $R \otimes_{R} N$ with $N$.
The short exact sequence above gives rise to a long exact sequence
\begin{align*}
 \dots  \to &H_i(\Hom_{\Z[\pi]}(C_*, R \otimes_{R} N))
\to H_i(\Hom_{\Z[\pi]}(C_*, Q \otimes_{R} N))\to \\
&\to  H_i(\Hom_{\Z[\pi]}(C_*, Q/R \otimes_{R} N)) \xrightarrow{BS}   H_{i+1}(\Hom_{\Z[\pi]}(C_*,R \otimes_{R} N))
\to \dots
\end{align*}
The coboundary map $BS$ in this long exact sequence is called the \emph{Bockstein map}.

For example, continuing with earlier notation,  let $Y$ be a connected topological space with base point $y_0$ and $\pi=\pi_1(Y,y_0)$. Then for the $(R,\Z[\pi])$-module $R^d_\alpha $, and with $C_* = C_*(\wt{Y})^t$, we have $\op{BS} \colon H^1(Y; Q/R \otimes_{R} R^d_\alpha) \to H^2(Y;R^d_\alpha)$.

\subsection{Definition of the twisted Blanchfield pairing of a 3-manifold}\label{section:blanchfield}
In the following let $M$ be a closed 3-manifold and write $\pi=\pi_1(M)$. Let $\a\colon \pi\to \op{GL}(R,d)$ be a representation over a commutative domain $R$. As above denote the quotient field of $R$ by $Q$.

\emph{Assume that $H_1(M;R^d_\alpha)$ is $ R$-torsion}. By Poincar{\'e} duality and universal coefficients, this is equivalent to saying that  $H^*(M;Q \otimes_{R} R^d_\alpha)=0$, which in turn implies that the  Bockstein map $\op{BS} \colon H^1(M; Q/R \otimes_{R} R^d_\alpha)\to H^2(M;R^d_\alpha)$ is an isomorphism.
Let $\Psi$ be the composition of the following maps
\[ \xymatrix@C1.2cm@R0.5cm{H_1(M;R^d_\alpha)\ar@/_2pc/[rddr]_\Psi\ar[r]\ar[r]^-{\PD^{-1}}_-{\cong} & H^2(M;R^d_\alpha)\ar[r]^-{\BS^{-1}}_-\cong& H^1(M; Q/R \otimes_R R^d_\alpha) \ar[d]^-(0.45){\kappa}\\
&& H_1(\hom_{R}(C_*(M;R_{\alphas}^d),Q/ R)^t)\ar[d]^(0.45){\ev}\\
&& \hom_{R}(H_1(M;R^d_{\alphas}),Q/ R)^t.}\]
From the adjoint of $\Psi$ we obtain a form
\[ \ba{rcl} \Bl \colon H_1(M;R^d_{\alphas})\times H_1(M;R^d_{\alpha})&\to& Q/ R\\
(a,b)&\mapsto &\Psi(b)(a)\ea\]
which is referred to as the \emph{twisted Blanchfield pairing} of $(M,\alpha)$.
This pairing is sesquilinear over $ R$, in the sense that $\Bl(pa,qb)=p\Bl(a,b)\ol{q}$ for any $a,b\in H_1(M;R^d_\alpha)$ and $p,q\in  R$.

If $\a$ is unitary, that is if $\a=\alphas$, then we obtain a pairing
\[ \ba{rcl} \Bl^{\alpha}\colon H_1(M;R^d_\alpha)\times H_1(M;R^d_{\alpha})&\to& Q/ R\\
(a,b)&\mapsto &\Psi(b)(a).\ea\]

\begin{prop}
Suppose that $M$ is a closed 3-manifold, $\alpha$ is unitary, and that $R$ is a PID.
Then $\Bl^{\alpha}$ is nonsingular and hermitian.
\end{prop}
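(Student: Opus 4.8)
The plan is to derive both properties from the algebraic description of $\Bl^\alpha$ as the composite $\Psi$ of the four maps appearing in the defining diagram, exploiting that over a PID each individual map is an isomorphism (or close to it) and that Poincar\'e duality and the evaluation map carry their own symmetry.

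First I would address \emph{nonsingularity}. The hypothesis that $M$ is closed together with the assumption (standing throughout Section~\ref{section:blanchfield}) that $H_1(M;R^d_\alpha)$ is $R$-torsion means $H^*(M;Q\otimes_R R^d_\alpha)=0$, so the Bockstein map $\op{BS}$ is an isomorphism, and Poincar\'e duality $\PD^{-1}$ is an isomorphism since $M$ is a closed manifold. The map $\kappa$ was shown in Section~\ref{section:kronecker} to be an isomorphism on the chain level, hence on homology. So the only map in the composite $\Psi$ that is not manifestly an isomorphism is the evaluation map $\ev$. The key point is that $R$ is a PID and all homology modules in sight are finitely generated torsion $R$-modules; for such modules the universal coefficient spectral sequence (or directly the structure theorem plus $\Ext_R(R/(r),Q/R)\cong R/(r)$) shows $H_1(\hom_R(C_*(M;R^d_{\alphas}),Q/R))\cong \hom_R(H_1(M;R^d_{\alphas}),Q/R)$, i.e. $\ev$ is an isomorphism and the higher $\Ext$ contributions vanish because $Q/R$ is an injective $R$-module (it is divisible, and over a PID divisible equals injective). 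Composing, $\Psi$ is an isomorphism onto $\hom_R(H_1(M;R^d_{\alphas}),Q/R)^t$, which is exactly the statement that $\Bl^\alpha$ is nonsingular.

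Next, \emph{hermitian symmetry}. Here I would use that $\alpha$ is unitary, so $\alpha=\alphas$ and the two homology groups in the pairing literally coincide; sesquilinearity over $R$ was already recorded just before the proposition. It remains to show $\Bl^\alpha(a,b)=\ol{\Bl^\alpha(b,a)}$. The cleanest route is the standard one: $\Bl^\alpha$ is, up to the identification above, the Bockstein of the cup-product/linking form on $M$, and symmetry of the Blanchfield form is inherited from the symmetry of Poincar\'e duality on the closed oriented $3$-manifold $M$ — concretely, the fact that the cap product with $[M]$ fits into a commutative square with its own dual (the $\Phi_0$ versus $\Phi_0^*$ discussion of Section~\ref{section:trotters-formulae}), twisted by the sign $(-1)^{r(n-r)}$ which for $n=3$, $r=1$ is $+1$. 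One packages this by chasing a representative: given classes $[x],[y]\in H_1(M;R^d_\alpha)$, pick cochain-level lifts through $\PD^{-1}$ and $\BS^{-1}$, write $\Bl^\alpha([x],[y])=\tfrac1r\,\langle w,y\rangle$ for suitable $w$ with $\partial^* w = r\cdot\PD^{-1}(x)$, and then verify that interchanging the roles of $x$ and $y$ changes the value by the involution, using sesquilinearity of the pairing $\langle-,-\rangle$ from \eqref{equation:pairing-for-evaluation-map} and the symmetry of the symmetric structure map $\Phi_0$ modulo a chain homotopy (the higher $\Phi_1$). This is exactly the argument by which the classical Blanchfield pairing is shown hermitian, transcribed into the twisted setting.

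I expect the main obstacle to be the symmetry statement rather than nonsingularity: nonsingularity is a formal consequence of ``PID $+$ torsion $+$ $Q/R$ injective,'' but proving $\Bl^\alpha$ hermitian requires genuinely using that the chain-level diagonal approximation $\Delta_0$ can be chosen so that $T\Delta_0$ is chain homotopic to $\Delta_0$ (which is part of the symmetric-structure data, Definition~\ref{Defn:higherdiagonalmaps}), and tracking how that chain homotopy interacts with the Bockstein and the evaluation map without disturbing the class in $Q/R$. Alternatively, if the paper prefers to avoid an explicit chain-level chase, one can cite the general fact that the symmetric Poincar\'e structure on $C_*(M;\Z[\pi])$ induces, after tensoring with $R^d_\alpha$ and passing to the $R$-torsion part, a $(-1)^0=+1$-symmetric linking form in the sense of Ranicki's $L$-theory, and that this abstract symmetry specializes to the hermitian property of $\Bl^\alpha$; I would lean on whichever of these the surrounding text has set up, most likely the explicit route since the whole point of the paper is to make these structures computable.
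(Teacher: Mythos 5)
Your nonsingularity argument is essentially identical to the paper's: the paper also observes that $\PD^{-1}$, $\BS$ and $\kappa$ are isomorphisms and that the remaining point is the evaluation map, which is an isomorphism because $R$ is a PID, so $Q/R$ is injective and the $\Ext^1_R(H_1(M;R^d_{\alpha}),Q/R)$ term in the universal coefficient theorem vanishes. Where you diverge is the hermitian property: the paper does not prove it at all but simply cites \cite{Powell:2016-1}, whereas you sketch the underlying chain-level argument (symmetry of $\Phi_0$ up to the chain homotopy $\Phi_1$, traced through the Bockstein and evaluation maps). That sketch is pointed in the right direction --- it is essentially what the cited reference carries out --- but as written it is a plan rather than a proof: the decisive step, ``verify that interchanging the roles of $x$ and $y$ changes the value by the involution,'' is exactly the delicate computation in which one must track how the homotopy $\Phi_1$ interacts with the choice of $w$ with $\partial^* w = r\cdot \PD^{-1}(x)$ and check that the discrepancy lands in $R$, so that it dies in $Q/R$; none of that bookkeeping is done. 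So either complete that chain-level chase in detail, or do what the paper does and quote \cite{Powell:2016-1} (your own closing remark anticipates this second option, which is in fact the route the paper takes). The trade-off is clear: the citation keeps the proposition short, while your explicit route, if completed, would make the paper more self-contained and meshes well with the chain-level definition of the pairing in Definition~\ref{defn:chain-level-blanchfield} and Proposition~\ref{Prop:chainlevelBlanchfield}.
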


\begin{proof}
  The proposition was proved in \cite{Powell:2016-1}.  In particular we refer to this paper for the fact that $\op{Bl}^{\alpha}$ is hermitian.  Nonsingularity follows from the fact that each of the maps in the above diagram are isomorphisms. In particular, the composition $\ev \circ \kappa$ is an isomorphism by the universal coefficient theorem: since $R$ is a PID, $Q/R$ is an injective $R$-module and so higher $\Ext$ terms such as $\Ext^1_R(H_1(M;R^d_{\alpha}),Q/R)$ terms vanish.
\end{proof}

\section{The twisted Blanchfield pairing via the symmetric chain complex}\label{section:algebraic-defn-TBF}

In order to compute the twisted Blanchfield pairing explicitly, we will use a formula describing the pairing of two homology classes in terms of chain level representatives.

Let $\pi$ be a finitely presented group, let $R$ be a commutative domain with involution, let $Q$ be its field of fractions, and let $V$ be an $(R,\Z[\pi])$-bimodule.
Let $(C_*,\Phi) $ be a 3-dimensional symmetric Poincar\'{e} chain complex.

Define $V^* := \Hom_R(V,R)^t$, the $R$-dual, converted into a left $R$-module using the involution.
The right $\Z[\pi]$-module structure of $V^*$ is defined via $(f\cdot g)(v) = f(v\ol{g})$, where $f \in V^*$, $g \in \Z[\pi]$ and $v \in V$.
After tensoring a chain complex and its dual with $V$, the boundary, coboundary and symmetric structure maps $f=\partial, \partial^*$ or $\Phi_s$ become $\Id \otimes f$, however we usually omit $\Id \otimes$ from the notation.

For future reference we record the following elementary lemma.

\begin{lemma}\label{lem:isoduals}
Let $R$ and $A$ be rings with involution.
Let $V$ be an $(R,A)$-bimodule and let $W$ be a free finitely generated $A$-module.  Define $V^* := \Hom_R(V,R)^t$ and $W^* := \Hom_{A}(W,A)^t$.  Define the right $A$-module structure on $V^*$ by $(f\cdot s)(v) = f(v\ol{s})$, where $f \in V^*$, $v \in V$ and $s \in A$.  Then the map
\[ \ba{rcl} (V^*\otimes_{A} W^*)&\to & (V\otimes_A W)^* \\
(\phi\otimes f)&\mapsto & (v\otimes w\mapsto \phi(v\cdot f(w))\ea \]
is an isomorphism of left $R$-modules.
\end{lemma}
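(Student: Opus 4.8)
The plan is to exhibit an explicit inverse map and check that both composites are the identity. First I would verify that the stated map $\Theta\colon V^*\otimes_A W^* \to (V\otimes_A W)^*$ is well-defined: one must check that $(v\otimes w)\mapsto \phi(v\cdot f(w))$ is additive in each variable, that it is $R$-linear in the sense required for an element of $(V\otimes_A W)^* = \Hom_R(V\otimes_A W, R)^t$, and — crucially — that it respects the tensor relation $va\otimes w \sim v\otimes aw$ for $a\in A$. The latter is exactly where the definition of the right $A$-action on $V^*$ enters: $\phi(va\cdot f(w)) = (\phi\cdot \ol{a})(v\cdot f(w))$ must match what one gets from $v\otimes af(w) = v\otimes f(aw)$ (using that $f\in W^*$ is a left module map after the involution twist), so this is a short computation with the involution. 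I would also check that $\Theta$ is additive in $\phi\otimes f$ and is compatible with the $A$-balancing on the source, i.e.\ $\phi\cdot s\otimes f$ and $\phi\otimes s\cdot f$ have the same image; again this reduces to unwinding $(f\cdot s)(w) = f(w\ol{s})$ against $(\phi\cdot s)(v) = \phi(v\ol{s})$.

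Next, to produce the inverse, I would use that $W$ is free and finitely generated: fix an $A$-basis $w_1,\dots,w_n$ of $W$ with dual basis $w_1^*,\dots,w_n^*\in W^*$. Given $\Lambda\in (V\otimes_A W)^*$, define $\Lambda\mapsto \sum_{i=1}^n \lambda_i \otimes w_i^*$, where $\lambda_i\in V^*$ is the map $v\mapsto \Lambda(v\otimes w_i)$. One checks $\lambda_i$ is genuinely $R$-linear (from $R$-linearity of $\Lambda$) so it lies in $V^*$. Then one verifies the two composites: applying $\Theta$ to $\sum_i\lambda_i\otimes w_i^*$ and evaluating on $v\otimes w$ with $w = \sum_j w_j a_j$ recovers $\sum_j \lambda_i(v\cdot \delta_{ij}a_j$-type terms$)$ which collapses to $\Lambda(v\otimes w)$ by $A$-linearity of $\Lambda$ in the $W$-slot; conversely, starting from $\phi\otimes f$, form $\Theta(\phi\otimes f)$, extract $\lambda_i\colon v\mapsto \phi(v\cdot f(w_i))$, and observe $\sum_i\lambda_i\otimes w_i^* = \sum_i (\phi\cdot \text{(scalar } f(w_i)))\otimes w_i^* = \phi\otimes\big(\sum_i f(w_i)w_i^*\big) = \phi\otimes f$, the last step using the $A$-balancing of the tensor product and the expansion $f = \sum_i f(w_i)w_i^*$ valid for $f$ on a free module. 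Finally I would note $\Theta$ is manifestly a left $R$-module homomorphism: the left $R$-action on $V^*\otimes_A W^*$ comes from $V^*$, and $\Theta(r\phi\otimes f)$ evaluated on $v\otimes w$ gives $(r\phi)(v\cdot f(w)) = r\cdot\phi(v\cdot f(w))$, matching the left $R$-action on $(V\otimes_A W)^*$.

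The main obstacle — really the only subtle point, since the rest is bookkeeping — is keeping the involution twists straight across the several module structures in play: $V^*$ carries a right $A$-action defined through the involution, $W^*$ is the involuted dual of $W$, and the target is itself an involuted $\Hom$-module. I would organize the verification so that each balancing/linearity check is stated as a single identity between elements of $R$ and resolved by one application of $\ol{a\cdot b}=\ol b\,\ol a$, rather than tracking left-versus-right conventions globally. Since the statement says the proof is a straightforward verification, I would present the inverse map explicitly and leave the (entirely mechanical) identities to the reader, perhaps spelling out only the tensor-balancing check as the representative case.
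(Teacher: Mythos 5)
Your proposal is correct and follows the route the paper intends: the lemma is recorded there without proof, as an elementary verification, and your explicit dual-basis inverse is the standard way to supply it (well-definedness via the $A$-balancing, inverse $\Lambda\mapsto\sum_i\lambda_i\otimes w_i^*$ with $\lambda_i(v)=\Lambda(v\otimes w_i)$, both composites the identity). Two of the deferred involution identities should be stated in their corrected form, since as written they are off by a conjugation: with $(\phi\cdot s)(v)=\phi(v\ol{s})$ on $V^*$ and $(a\cdot g)(w)=g(w)\ol{a}$ for the involuted left $A$-action on $W^*=\Hom_A(W,A)^t$, the functional extracted from $\Theta(\phi\otimes f)$ is $\lambda_i=\phi\cdot\ol{f(w_i)}$ (not $\phi\cdot f(w_i)$), and the dual-basis expansion reads $f=\sum_i\ol{f(w_i)}\cdot w_i^*$ (not $\sum_i f(w_i)w_i^*$); the two conjugations then cancel across the balancing, $\sum_i\lambda_i\otimes w_i^*=\sum_i\phi\otimes\ol{f(w_i)}\cdot w_i^*=\phi\otimes f$, exactly as you predicted the bookkeeping would resolve. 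Two further cosmetic points: since $W$ is a free left $A$-module, expand $w=\sum_j a_jw_j$ with coefficients on the left; and the target balancing check needs no involution at all, only left $A$-linearity of $f$ and associativity of the right action on $V$, namely $\phi\big((va)\cdot f(w)\big)=\phi\big(v\cdot(af(w))\big)=\phi\big(v\cdot f(aw)\big)$. With these adjustments the verification is complete.
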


Suppose that the right $\zpx$-module structure  on $V$ is given by a representation $\a \colon \pi \to \Aut(V)$. Suppose also that $V$ has an $R$-sesquilinear, nonsingular inner product $\ll \cdot, \cdot \rr$.
Then $\a$ is called \emph{unitary} (with respect to $\a$) if $\ll v\a(g),w\a(g)\rr = \ll v,w \rr$ for all $v,w \in V$ and $g \in \pi$. (Equivalently, $\a$ is unitary if the inner product satisfies the conditions of Section~\ref{section:kronecker}.)

The data of an inner product with respect to which $\a$ is unitary is equivalent to an isomorphism of $(R,\zpx)$-bimodules $\Theta \colon V \toiso V^*$.  From now on we require that $V$ is always equipped with such an isomorphism $\Theta$, or equivalently we require that the representation $\a$ be unitary. For example, when $V=R^n$ with the standard hermitian inner product $\ll v,w \rr= v \cdot \ol{w}$ the requirement that $\a \colon \pi \to \GL(V)$ is unitary coincides with the usual notion of a unitary representation.
In this case the inner product $\ll \cdot, \cdot \rr$ is equivalent to the pairing in equation (\ref{equation:pairing-for-evaluation-map}), which has the same notation.

For an $R$-module $N$, let $TN$ denote the maximal torsion submodule $\{n \in N \,|\, rn=0 \text{ for some } r \in R\sm\{0\}\}$.
The next definition is based on~\cite[Page~185]{Ranicki:1981-1}.  See also \cite[Proposition~3.4.1]{Ranicki:1981-1} for the precise relationship between symmetric complexes and linking pairings.

\begin{definition}[Chain level twisted Blanchfield pairing]\label{defn:chain-level-blanchfield}
The twisted Blanchfield pairing of a 3-dimensional symmetric chain complex $(C_*, \Phi)$,
\[\wt{\Bl} \colon TH^2(V \otimes_{\Z[\pi]} C) \times TH^2(V \otimes_{\Z[\pi]} C) \to Q/R,\]
is defined as follows.  For $[x], [y] \in TH^2(V \otimes_{\Z[\pi]} C)$, let
\[\wt{\Bl}([y],[x]) = \smfrac{1}{s} \ol{z(\Phi_0(x))}\]
where $x, y \in V \otimes_{\Z[\pi]} C^2$, $z \in V \otimes_{\Z[\pi]} C^1$ and $\partial^*(z) = sy$ for some  $s \in R\sm\{0\}$.
To evaluate $z$ on $\Phi_0(x)$ use the image of $z$ under the isomorphisms
\[V \otimes_{\Z[\pi]} C^1 \xrightarrow{\Theta \otimes \Id} V^* \otimes_{\zpx} C_1^* \xrightarrow{\text{Lemma }\ref{lem:isoduals}} (V \otimes C_1)^*\]

For a symmetric \emph{Poincar\'{e}} complex, we can also define the Blanchfield pairing on homology:
\[\Bl \colon TH_1(V \otimes_{\Z[\pi]} C) \times TH_1(V \otimes_{\Z[\pi]} C) \to Q/R\]
via $\Bl([u],[v]) := \wt{\Bl}([\Phi_0]^{-1}([u]),[\Phi_0]^{-1}([v]))$.
\end{definition}

\begin{proposition}\label{Prop:chainlevelBlanchfield}\cite{Powell:2016-1}~
\begin{enumerate}[font=\normalfont]
\item[(i)] The twisted Blanchfield pairing of Definition~\ref{defn:chain-level-blanchfield} is well-defined and sesquilinear in $R$.
\item[(ii)] If the $\Z[\pi]$ structure on $V$ is unitary, then the twisted Blanchfield pairing of a symmetric complex is hermitian, i.e.\ $\wt{\Bl}([y],[x])= \ol{\wt{\Bl}([x],[y])}$.
\item[(iii)] Whenever $C_*=C_*(M, R_{\a}^d)$ is the symmetric Poincar{\'e} chain complex of a closed 3-manifold, the above definition of a twisted Blanchfield pairing coincides with that of Definition~\ref{section:blanchfield}.  Hence when $R$ is a PID, the pairing is nonsingular.
\end{enumerate}
\end{proposition}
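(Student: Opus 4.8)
The plan is to treat (i) and (ii) as the purely algebraic content, verified directly from the symmetric structure relations of Definition~\ref{Defn:Qgroups} --- these are also the arguments given in \cite{Powell:2016-1} --- and to prove (iii) by tracing a single homology class through both the chain-level recipe of Definition~\ref{defn:chain-level-blanchfield} and the homological recipe $\Psi = \ev\circ\kappa\circ\BS^{-1}\circ\PD^{-1}$ of Section~\ref{section:blanchfield}, checking that the two produce the same element of $Q/R$. Nonsingularity over a PID is then not a separate computation: it is inherited from the corresponding statement in Section~\ref{section:blanchfield} the moment the two pairings are identified.

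For (i), the first step I would isolate is the lemma: if $\zeta \in V\otimes_{\Z[\pi]}C^1$ is a cocycle and $w \in V\otimes_{\Z[\pi]}C_1$ is a cycle whose homology class is $R$-torsion, then, after passing through $\Theta\otimes\Id$ and Lemma~\ref{lem:isoduals}, $\zeta(w)=0$ in $R$. Indeed $\zeta$ vanishes on boundaries by adjointness of $\partial$ and $\partial^*$, so $\zeta(w)$ depends only on $[w]$; and if $r[w]=0$ with $r\neq 0$ then $r\,\zeta(w)=\zeta(rw)=0$, forcing $\zeta(w)=0$ since $R$ is a domain. Because $\Phi_0$ is a chain map it carries the torsion class $[x]\in TH^2$ to a torsion class in $H_1$, so the lemma applies to $w=\Phi_0(x)$. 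Granting this, independence of $\wt{\Bl}([y],[x])$ of the choices of $z$ and of $s$ is immediate, since any two valid choices differ by a cocycle evaluated on $\Phi_0(x)$; independence of the representative of $[y]$ follows since replacing $y$ by $y+\partial^*(w)$ forces replacing $z$ by $z+sw$, which changes the answer by $w(\Phi_0(x))\in R$; and independence of the representative of $[x]$ follows from $\Phi_0\partial^* = \pm\partial\Phi_0$, since a coboundary change in $x$ alters $\Phi_0(x)$ by $\pm\partial\Phi_0(v)$, on which $z$ evaluates to $\pm(\partial^*z)(\Phi_0 v)=\pm s\cdot y(\Phi_0 v)\in sR$. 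Sesquilinearity over $R$ is read off directly, using that $\Phi_0=\Id_V\otimes\Phi_0$ is $R$-linear and that the outer conjugation supplies the involution in the first variable.

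For (ii) the plan is to bring in the first higher homotopy: the $n=3$ instance of the relation in Definition~\ref{Defn:Qgroups} reads $\Phi_0 - T\Phi_0 = d_C\Phi_1 + (-1)^r\Phi_1\delta_C$, so I would substitute $\Phi_0(x) = (T\Phi_0)(x) \pm (d_C\Phi_1 + \Phi_1\delta_C)(x)$ into $\wt{\Bl}([y],[x]) = \tfrac1s\,\ol{z(\Phi_0(x))}$. Using that $\partial^*(z)=sy$, that $z$ is being evaluated on a cycle, and that $\Theta$ lets one re-express $(T\Phi_0)(x)$ as an honest evaluation, the correction terms land in $R$ and the leftover is exactly $\ol{\wt{\Bl}([x],[y])}$. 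This is the one place where the symmetric structure beyond $\Phi_0$, and the unitarity of $V$ (the presence of $\Theta$), are genuinely needed; the detailed version of this computation is in \cite{Powell:2016-1}.

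For (iii), let $M$ be a closed oriented $3$-manifold and $(C_*,\Phi)$ its symmetric Poincar\'e complex, with $C_*=C_*(\wt M)$ and $V=R^d_\a$ ($\a$ unitary, as required throughout Section~\ref{section:algebraic-defn-TBF}). The first step is Ranicki's symmetric construction (\cite[Part~II]{Ranicki3}; see Section~\ref{section:symm-cxs-closed-mflds}), which identifies $[\Phi_0]$ with the cap product $-\cap[M]$, so $[\Phi_0]^{-1}=\PD^{-1}$ on twisted homology. I would then trace $[u]\in TH_1(M;R^d_\a)$: pick a cocycle $x$ representing $\PD^{-1}[u]\in TH^2$ and $z$, $s\neq 0$ with $\partial^*(z)=s\,x$; then $\tfrac1s z$, reduced mod $R$, is a cocycle in $C^1(M;(Q/R)^d_\a)$ with Bockstein $[x]$, i.e.\ it represents $\BS^{-1}\PD^{-1}[u]$. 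Applying $\Theta$ and Lemma~\ref{lem:isoduals} to $z$ is exactly the isomorphism $\kappa$ of Section~\ref{section:kronecker} for the pairing~(\ref{equation:pairing-for-evaluation-map}), and evaluating the result on a cycle representative of a class $[v]$ is the map $\ev$; since $\Phi_0(x')$ is such a cycle representative of $[v]$ when $[x']=[\Phi_0]^{-1}[v]$, the chain-level quantity $\tfrac1s\,\ol{z(\Phi_0(x'))}$ equals $(\ev\circ\kappa\circ\BS^{-1}\circ\PD^{-1}[u])([v])=\Psi([u])([v])$, which is the homological twisted Blanchfield pairing. Nonsingularity over a PID then follows from the proposition of Section~\ref{section:blanchfield}. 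The main obstacle here is not conceptual but a matter of bookkeeping: one must confirm that the sign conventions fixed in Section~\ref{section:Poincare-cxs-definitions} (for the Eilenberg--Zilber map, the slant map, and the boundary operator on $C^t\otimes C$) really make $\backslash\Delta_0([M])$ the classical cap product, and one must thread the involution and the distinction between $\a$ and $\alphas$ consistently through $\kappa$, $\ev$, and the outer conjugation in $\wt{\Bl}$ --- it is easy to emerge from this off by a conjugate or a sign.
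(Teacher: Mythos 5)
The paper itself offers no argument for this proposition: it is stated with a citation to \cite{Powell:2016-1} (and Remark~\ref{remark:phi0dual} merely records a by-product of the proof given there), so there is no in-paper proof to compare against; your proposal is a sound reconstruction of the argument in that reference. Your key lemma for (i) (a cocycle in $V\otimes_{\Z[\pi]}C^1$ evaluates to zero in $R$ on any cycle whose class is $R$-torsion) is exactly the right engine for well-definedness, your use of $\Phi_1$ for (ii) is where the hermitian property genuinely lives, and your plan for (iii) --- identify $[\Phi_0]$ with $-\cap[M]$ via the symmetric construction, observe that $\tfrac1s z \bmod R$ is a $(Q/R)$-coefficient cocycle whose Bockstein is $[x]$, and match $\Theta$ plus Lemma~\ref{lem:isoduals} with $\kappa$ and the evaluation with $\ev$ --- is the correct route, with nonsingularity then imported from the homological side. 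Two bookkeeping points deserve care, both of the ``off by a conjugate'' type you flag. First, in the independence-of-$[x]$ step the correction term is not $s\,y(\Phi_0 v)\in sR$: with the paper's convention $(a\cdot f)(x)=f(x)\ol{a}$ the evaluation of $\partial^*z=sy$ on $\Phi_0(v)$ is $\ol{s}\,y(\Phi_0 v)$, and it is precisely the outer conjugation in $\wt{\Bl}=\tfrac1s\ol{z(\Phi_0(x))}$ that converts this to $s\,\ol{y(\Phi_0 v)}$ and hence to an element of $R$ after dividing by $s$; without tracking that outer bar the term $\tfrac{\ol{s}}{s}R$ would not visibly lie in $R$. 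Second, in (iii) the chain-level recipe feeds the \emph{first} argument through the ``find $z,s$'' (Bockstein-type) process and evaluates on $\Phi_0$ of the second, whereas Definition of Section~\ref{section:blanchfield} sets $\Bl(a,b)=\Psi(b)(a)$, feeding the \emph{second} argument through $\Psi$; the two agree on the nose only after the conjugations inside the pairing~(\ref{equation:pairing-for-evaluation-map}) used in $\kappa$ and the outer bar in $\wt{\Bl}$ are threaded through (otherwise one lands at $\ol{\Psi([v])([u])}$, i.e.\ agreement only up to swapping arguments via hermitianness). Neither point damages the argument --- and for the paper's applications only nonvanishing matters --- but they are exactly the computations that \cite{Powell:2016-1} carries out and that your sketch leaves implicit.
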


Note that, in the conventions of Definition~\ref{defn:dual-complex}, we have $\partial^* = \delta \colon C^1 \to C^2$, in particular there is no minus sign here.  Thus the two definitions of the twisted Blanchfield pairing agree, and not just up to sign.  On the other hand, the sign depends on the choice of fundamental class of $M$, and in applications we will make this choice somewhat arbitrarily. Indeed, in Remark~\ref{Rmk:relativefundclass}, we already did so.  In addition, in our application to knot concordance, we only ever need to show that a Blanchfield pairing is nonvanishing, so we never need to determine its sign.

\begin{remark}\label{remark:phi0dual}
The proof of the above proposition in \cite{Powell:2016-1} also shows that one may use $\Phi_0^*$ instead of $\Phi_0$ to compute the twisted Blanchfield pairing. This may be exploited in computations, when the map $\Phi_0^*\colon C^2 \to C_1$ may be simpler than $\Phi_0 \colon C^2 \to C_1$.  For example this is the case for the formulae in Theorem~\ref{trotterdelta0}.
\end{remark}

\subsection{Computing the twisted Blanchfield pairing}\label{section:computing-TBP}

Now we state our algorithm for computing twisted Blanchfield pairings of the knot 0-surgery $M_K$, starting from a reduced diagram with $c$ crossings for $K$ and a unitary map $\a: \pi \to \Aut_R(V)$ for some commutative domain $R$ and $V=R^k_{\a}$.

From Theorem~\ref{Thm:mainchaincomplex}, we have  explicit formulae for $C_*(M_K, \Z[\pi_1(M_K)])$:
\[\underset{C_3 \cong \langle h^3_1,h^3_{s} \rangle}{\underbrace{ \bigoplus_{2}{}\,\Z[\pi] }}\,\,
\xrightarrow{\partial_3}\,\,
\underset{C_2 \cong \langle h^2_1,\dots,h^2_c,h^2_s \rangle }{\underbrace{\bigoplus_{c+1}{}\,\Z[\pi] }}\,\, \xrightarrow{\partial_2}\,\,
\underset{C_1 \cong \langle h^1_1,\dots,h^1_c \rangle }{\underbrace{\bigoplus_{c}{}\,\Z[\pi]  }}\,\, \xrightarrow{\partial_1}\,\,
\underset{C_0 \cong \langle h^0 \rangle}{\underbrace{ \Z[\pi] }},
\]
where with respect to the above bases we have
\[\ba{rcl}
           \partial_3=  \left(\begin{array}{cc}
            w_1 & -u_1 \\
            \vdots & \vdots \\
            w_c & -u_c \\
            0 & g_1-1
            \end{array}\right)^T,

\partial_2 = \left(
                 \begin{array}{ccccc}
                   \left(\partial r_1/\partial g_1\right) &  & \hdots &  & \left(\partial r_1/\partial g_c\right)\\
                    &  &  &  &  \\
                   \vdots &  & \ddots &  & \vdots \\
                    &  &  &  &    \\
                   \left(\partial r_c/\partial g_1\right) &  & \hdots &  & \left(\partial r_c/\partial g_c\right) \\
                   \left(\partial \ell/\partial g_1\right) &  & \hdots &  & \left(\partial \ell/\partial g_c\right)\\
                 \end{array}
               \right),
\partial_1 = \left(
               \begin{array}{c}
                 g_1 - 1 \\
                  \\
                 \vdots \\
                 \\
                  g_c - 1\\
               \end{array}
             \right). \ea\]
  The elements $\{r_i\}$ were defined in Proposition~\ref{prop:wirtingerpresentation}, the $\{u_i\}$ and the $\{w_i\}$ were defined in Theorem~\ref{Thm:mainchaincomplex}, and $\ell$ was given in Equation~\ref{eqn:longitudeword}.

 We also have the relevant piece of the symmetric structure map, given by
 \[\Phi_0^*= \ol{\Phi_0}^T= \left(\begin{array}{ccccc}
 -w_1^{-1}\frac{\partial w_1}{\partial g_1} + u_1^{-1}\frac{\partial u_1}{\partial g_1} & & \hdots & & -w_1^{-1}\frac{\partial w_1}{\partial g_c} + u_1^{-1}\frac{\partial u_1}{\partial g_c} \\
 & & & &  \\
 \vdots & & \ddots & & \vdots  \\
 & & & &  \\
 -w_c^{-1}\frac{\partial w_c}{\partial g_1} + u_c^{-1}\frac{\partial u_c}{\partial g_1} & & \hdots & & -w_c^{-1}\frac{\partial w_c}{\partial g_c} + u_c^{-1}\frac{\partial u_c}{\partial g_c} \\
 g_1^{-1} & & \hdots & & 0
 \end{array} \right): C^2 \to C_1.\]

Form the chain complexes $Y_*:= V \otimes_{\zpx} C_*(M_K, \Z[\pi])$, so $H_*(Y)=H_*(M_K,V).$
It is natural to take our original bases for $C_*(M_K, \Z[\pi])$, choose an $R$-basis for $V$, and work with the bases thereby obtained for $Y_*$ and, dually, $Y^*$. We then represent each boundary, coboundary or symmetric structure map as a matrix with respect to these bases and by a mild abuse of notation continue to refer to them by the same names.
 Note that for example the matrix representative for  $\partial_2: Y_2 \to Y_1$ is a $\dim(V)\cdot (c+1) \times \dim(V) \cdot c$ matrix with entries in $R$, by our convention that matrices act on row vectors from the right.

So we have explicit formulae for all maps in the following diagram.
\[\xymatrix @C+1cm{
Y^0 \ar[r]^{\partial^*_1} & Y^1 \ar[r]^{\partial^*_2} & Y^2 \ar[d]^{\Phi_0^*} \ar[r]^{\partial^*_3} & Y^3 \\
Y_3 \ar[r]_{\partial_3} & Y_2 \ar[r]_{\partial_2} & Y_1 \ar[r]_{\partial_1} & Y_0}\]
%
%
%
Now suppose that we would like to compute the twisted Blanchfield pairing on elements $[x],[y] \in TH_1(Y)$. Recall that we must first find chain level representatives of $(\Phi_0^*)^{-1}([x])$ and $(\Phi_0^*)^{-1}([y])$ in $H^2(Y)$. That is, recalling once more that all maps act on the right (Conventions~\ref{conventions2}), we must find $A,B \in Y^2$ and $a,b \in Y_2$ such that
\begin{equation}\label{equation:liftingchains}
A \cdot \Phi_0^*= x+ a \cdot \partial_2 \quad \text{ and } \quad B \cdot \Phi_0^*= y+ b \cdot \partial_2.
\end{equation}
We then need $Z \in Y^1$ and $s \in R$ such that
$Z\cdot \partial_2^*= s A.$
Having these, we can compute
\begin{equation}\label{equation:computing-form}
\Bl([x], [y])\,\,=\,\, \wt{\Bl}([A], [B])\,\,=\,\, \smfrac{1}{s}\ol{ Z(B \cdot \Phi_0^*)} \in Q/R.
\end{equation}

\subsection{Computational considerations}\label{section:matrix-moves-computing-TBF}

Directly finding the various $A,B, a, b, Z$, and $s$ as outlined above is generally quite difficult, and in practice one should instead change basis via the following algorithm, assuming now that $R$ is a Euclidean domain. Note that in fact these computations are made reasonable by use of a computer algebra system such as Maple, or any program capable of straightforward column and row operations on matrices. We provide one such program on our personal research websites (see the end of the paper for the url addresses at the time of writing), The program is also available from the authors on request.  We also used the computer to evaluate Fox derivatives and calculate the image of our matrices under the representations that we use.

First we need to simplify the matrices representing the coboundary maps so that we can compute the cohomology $H^2(Y)$.
We may perform row and column operations to obtain a matrix in reduced echelon form, but whose pivots need only be nonzero, and need not be the entry~$1$.  In the case of a nonsingular square matrix we obtain a diagonal matrix.  The Gaussian algorithm cannot be used in the standard way since division is not permitted.  Nevertheless we can perform operations until the $(1,1)$ entry contains the greatest common divisor of the $1$st row and the $1$st column.  This can then be used to clear the other entries of that row and column using further row and column operations.  Proceed inductively to arrange that the $(i,i)$ entry contains the greatest common divisor of the $i$th row and the $i$th column, and then clear using that entry.

Let $\delta_1$, $\delta_2$, $\delta_3$ and $F_0^*$ be matrix representatives for $\partial_1^*$, $\partial_2^*$, $\partial_3^*$ and $\Phi_0^*$ respectively, with respect to our original basis.  Explicit matrices were given in the previous section.    Perform the simplification process above on $\delta_3$ first.  Then perform the process on $\delta_2$.  One can then read off a simple presentation for the cohomology $H^2(Y) = H^2(M_K;V)$.    However during the process of row and column operations we must perform simultaneous operations on the four matrices above, since change of basis on $C^1$ changes the domain of $\delta_2$ and the codomains of $\delta_1$ and $F_0^*$, while change of basis on $C^2$ changes the domains of $\delta_3$ and $F_0^*$, and the codomain of $\delta_2$. For example, a change of basis of $C^2$ resulting in the operation $\delta_3 \mapsto L\delta_3$, where $L$ is an elementary matrix in $[\GL(R),GL(R)]$, should be accompanied by the changes $\delta_2 \mapsto \delta_2L^{-1}$ and $F_0^* \mapsto LF_0^*$.  Here recall again that matrices act on the right, as described in Conventions~\ref{conventions2}.

At the end of this process, the matrix $\delta_2$ has become a diagonal matrix, perhaps with zero entries, plus a column of zeroes.  Note that this also gives such a nearly-diagonal form for the matrix representing $\partial_2\colon Y_2 \to Y_1$. This substantially eases the computations required by Equation~\ref{equation:liftingchains},  while making those of Equation~\ref{equation:computing-form} immediate.

\section{Twisted Blanchfield pairings and knot concordance}\label{section:TBFs-and-knot-concordance}

In this section, we obtain conditions on twisted Blanchfield forms of slice knots, namely that they must be metabolic for certain representations.  Moreover, the metabolisers correspond to potential discs.

We pause to establish notational conventions for the rest of the paper.
Let $K$ be an oriented knot in $S^3$. There are associated 3-manifolds $X_K= S^3 \smallsetminus \nu(K)$, the knot exterior, and $M_K= S^3_0(K)$, the 3-manifold obtained by zero framed surgery along $K$.

For $k \in \N$, we let $X_k= X_k(K)$ and $M_k=M_k(K)$ denote the canonical cyclic $k$-fold covers of $X_K$ and $M_K$, respectively. We also let $\Sigma_k= \Sigma_k(K)$ denote the $k$-fold cyclic branched cover of $S^3$ along $K$.  Observe also that if $W$ is any $4$-manifold with $\partial W= M_K$ and $H_1(M_K;\Z) \to H_1(W;\Z)$ an isomorphism, then $W$ has a canonical $k$-fold cyclic cover $W_k$ for $k \in \N$.

The results of this section are particularly indebted to discussions with Stefan Friedl.

\subsection{The solvable filtration}\label{section:filtration}

For the convenience of the reader, we briefly recall the definition of the solvable filtration of knot concordance.  Our slice obstructions also obstruct knots from lying in certain stages of the filtration, so we will state our theorems in terms of the filtration, in order to give the strongest possible statements.

\begin{defn}[Derived series]~
Let $G$ be a group.
\begin{enumerate}[font=\normalfont]
\item[(a)] The \emph{$m^{th}$ derived subgroup} of $G$ is defined recursively via $G^{(0)}:=G$ and $G^{(m)}:= [G^{(m-1)}, G^{(m-1)}]$ for $m \geq 1$.
\item[(b)] Let $\mathcal{S}= (S_i)_{i \in \N}$, where each $S_i$ is either $\Q$ or $\Z_n$ for some $n \in \N$. Define $G_{\mathcal{S}}^{(0)}= G$ and define the \emph{$(m+1)^{th}$ $\mathcal{S}$-local derived subgroup} of $G$ iteratively by
\[G_{\mathcal{S}}^{(m+1)}= \ker\left\{G_\mathcal{S}^{(m)} \to G_\mathcal{S}^{(m)}/ [G_\mathcal{S}^{(m)},G_\mathcal{S}^{(m)}] \to \left(G_\mathcal{S}^{(m)}/ [G_\mathcal{S}^{(m)},G_\mathcal{S}^{(m)}]\right) \otimes S_{m+1} \right\}.
\]
 \end{enumerate}
\end{defn}

 We have the following generalisation of slice disc exteriors.

\begin{defn}\label{Defn:filtration}\cite{Cochran-Orr-Teichner:1999-1}
A knot $K$  is called \emph{$m$-solvable} for $m \in \N \cup \{0\}$ if there exists a compact, oriented, spin $4$-manifold $W$ with $\partial W = M_K$  satisfying the following conditions.
\begin{enumerate}
\item The inclusion induced map $H_1(M_K;\Z) \to H_1(W;\Z)$ is an isomorphism.
\item There is a collection of embedded surfaces $L_1, \dots, L_g$ and $D_1, \dots, D_g$ with trivial normal bundles forming a basis for $H_2(W;\Z)$ that are pairwise disjoint, except that $L_i$ and $D_i$ intersect transversally in a single point, for each $i-1,\cdots,g$.
\item $\im\left(\pi_1(L_i) \to \pi_1(W)\right)$ and $\im\left(\pi_1(D_i) \to \pi_1(W)\right)$ are both contained in $\pi_1(W)^{(m)}$ for $i=1, \dots, g$.
\end{enumerate}
We call $W$ an \emph{$m$-solution} for $K$.
We say that $K$ is \emph{$m.5$-solvable} if $M_K$ bounds some $m$-solution $W$ with the additional property that $\im\left(\pi_1(L_i) \to \pi_1(W)\right)$ is contained in $\pi_1(W)^{(m+1)}$.  In this case we call $W$ an $m.5$-solution.
\end{defn}
\noindent We remark that a slice disc exterior is an $m$- and $m.5$-solution for all $m\geq 0$.

\subsection{Branched covers and linking forms}\label{section:branched}

Note that $H_1(\Sigma_k;\Z)$ has a natural $\Z_k$ action, and we can therefore view $H_1(\Sigma_k;\Z)$ as a $\Z[\Z_k]$-module.
If $H_1(\Sigma_k;\Z)$ is finite, then there exists  a nonsingular linking form
\[\lambda_k\colon H_1(\Sigma_k;\Z)\times H_1(\Sigma_k;\Z)\to \Q/\Z\]
with respect to which $\Z_k$ acts via isometries.

We say that $P\subset H_1(\Sigma_k;\Z)$ is a \emph{metaboliser} of the linking form if $P$ is a $\Z[\Z_k]$-submodule of $H_1(\Sigma_k;\Z)$ such that $\lambda_k(P,P)=0$ and such that $|P|^2=|H_1(\Sigma_k;\Z)|$. We emphasise that in our definition of a metaboliser we require that $P$ is not just a subgroup, but a  $\Z[\Z_k]$-submodule of $H_1(\Sigma_k;\Z)$ --- that is, $P$ is preserved by the map on homology induced by the covering transformation.

We recall the following well-known lemma (see for example \cite{Casson-Gordon:1986-1}, \cite{Go78} or \cite[Section~2.5]{Friedl:2003-4} in the case when $K$ is slice).

\begin{lemma}\label{lem:linkmetabolic}\cite[Proposition~9.7]{Cochran-Orr-Teichner:1999-1}
Let $K$ be an oriented knot with 1-solution $W$ and $k$ be a prime power.
Let $P':= \ker\{H_1(X_k; \Z) \to H_1(M_k;\Z)\to H_1(W_k;\Z)\}$ and let $P$ be the image of $P'$ under the inclusion induced map $H_1(X_k; \Z) \to H_1(\Sigma_k; \Z)$. Then $P$ is a metaboliser of $\lambda_k$.
\end{lemma}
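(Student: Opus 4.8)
The plan is to show that $P$, the image in $H_1(\Sigma_k;\Z)$ of the kernel $P'$ of the composite $H_1(X_k;\Z) \to H_1(M_k;\Z) \to H_1(W_k;\Z)$, is a metaboliser for $\lambda_k$. There are three things to verify: that $P$ is a $\Z[\Z_k]$-submodule, that $\lambda_k(P,P)=0$, and that $|P|^2 = |H_1(\Sigma_k;\Z)|$. The $\Z[\Z_k]$-invariance is immediate from naturality, since every map in sight is equivariant for the deck transformation.

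For the size count, I would first relate $H_1(X_k;\Z)$, $H_1(M_k;\Z)$ and $H_1(\Sigma_k;\Z)$: filling in the torus $\partial X_k$ with solid tori, $M_k$ is obtained by zero-surgery and $\Sigma_k$ by the branched fill, so there is a surjection $H_1(X_k;\Z) \twoheadrightarrow H_1(\Sigma_k;\Z)$ with kernel generated by the lift of a meridian (and the core of the filling solid torus gives the branch locus), while $H_1(M_k;\Z) \cong H_1(\Sigma_k;\Z) \oplus \Z$ with the $\Z$ detected by the lifted meridian. The main input is a half-lives-half-dies argument for the $4$-manifold $W_k$: since $\partial W_k = M_k$ and the hypotheses of a $1$-solution force $H_1(M_k;\Q) \to H_1(W_k;\Q)$ to have half-dimensional image (indeed $H_1(M_K;\Z) \to H_1(W;\Z)$ an isomorphism plus the $k$-fold cover structure controls the rational homology), one gets that $\ker(H_1(M_k;\Q) \to H_1(W_k;\Q))$ is half-dimensional, and a Bockstein/torsion-linking-form argument (this is exactly \cite[Proposition~9.7]{Cochran-Orr-Teichner:1999-1} / the classical Casson--Gordon argument) upgrades this to the statement that the kernel of $H_1(M_k;\Z) \to H_1(W_k;\Z)$, intersected appropriately with the torsion, has order the square root of $|TH_1(M_k;\Z)|$. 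Transporting this statement along the identifications above yields $|P|^2 = |H_1(\Sigma_k;\Z)|$; here one must be careful that the extra $\Z$ summand in $H_1(M_k)$ and the meridian kernel in $H_1(X_k)$ are accounted for, i.e.\ that passing from $M_k$ to $\Sigma_k$ does not change the relevant torsion order, which holds because the meridian lift is nontorsion in $H_1(M_k)$ and dies in $\Sigma_k$.

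For the vanishing $\lambda_k(P,P)=0$, I would use the standard principle that a class bounding in a $4$-manifold pairs trivially under the boundary linking form. Concretely, elements of $P$ are represented by $1$-cycles in $\Sigma_k$ that, after pushing into $M_k$ and then into $W_k$, bound $2$-chains; one computes $\lambda_k$ via such a bounding $2$-chain in $W_k$ and the intersection pairing, and the fact that $H_1(M_K;\Z)\to H_1(W;\Z)$ is an isomorphism (so that rationally $H_2(W_k)$ injects into $H_2(W_k,M_k)$ in the right way, and the linking form of $M_k$ is presented by the intersection form of $W_k$) forces the pairing of two such classes to vanish in $\Q/\Z$. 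One subtlety is again the passage between $M_k$ and $\Sigma_k$: the branching introduces the solid torus core, but its contribution to linking numbers with classes in $P$ is trivial since those classes come from $X_k$.

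The step I expect to be the main obstacle is the order count $|P|^2 = |H_1(\Sigma_k;\Z)|$: the half-lives-half-dies statement is standard over $\Q$, but getting it exactly on the nose at the level of finite abelian groups, and correctly bookkeeping the $\Z$ summand of $H_1(M_k)$ versus the branched-cover relation $H_1(X_k) \twoheadrightarrow H_1(\Sigma_k)$, requires care. Since this is precisely the content of \cite[Proposition~9.7]{Cochran-Orr-Teichner:1999-1}, the cleanest route is to cite that result and simply note that our $P'$ and $P$ are defined to match theirs, and that $k$ being a prime power is exactly the hypothesis needed for the torsion linking form argument to go through (ruling out the failure of half-lives-half-dies for non-prime-power covers). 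So in the write-up I would state that the lemma is \cite[Proposition~9.7]{Cochran-Orr-Teichner:1999-1} essentially verbatim, indicate the identifications $H_1(X_k) \to H_1(M_k) \to H_1(W_k)$ versus $H_1(X_k) \to H_1(\Sigma_k)$, and refer the reader there for the detailed homological bookkeeping.
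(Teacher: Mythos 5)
Your proposal ends up in the same place as the paper: the paper gives no independent proof of this lemma, simply stating it with the citation to \cite[Proposition~9.7]{Cochran-Orr-Teichner:1999-1} (and pointing to Casson--Gordon, Gordon, and Friedl for the slice case), which is exactly the route you recommend. Your sketch of the equivariance, the half-lives-half-dies order count, and the vanishing of the linking form on classes bounding in $W_k$ is a reasonable outline of the standard argument behind that cited result, so there is nothing to correct.
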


\subsection{Metabelian representations}\label{section:reps}

Let $H$ be a $\zt$-module. Let $n\in \N$ and denote by $\zeta_n$ a primitive $n$th root of unity.
Given a  character $\chi\colon H\to H/(t^k-1)\to \Z_n$, define $\alpha(k,\chi)$ to be the representation given by
 $$ \begin{array}{rcl}  \Z\ltimes H &\to& \Z\ltimes H/(t^{k}-1) \to \GL(k,\Z[\zeta_n]\tpm) \\
  & & \\
    (j,h) &\mapsto &
\begin{pmatrix}
 0& \dots &0&t \\
 1&0& \dots &0 \\
\vdots &\ddots &  & \vdots\\
    0&\dots &1&0 \end{pmatrix}^j
\begin{pmatrix}
\zeta_q^{\chi(h)} &0& \dots&0 \\
 0&\zeta_q^{\chi(th)} & \dots& 0\\
\vdots&&\ddots & \vdots\\ 0&0&\dots &\zeta_1^{\chi(t^{k-1}h)} \end{pmatrix}. \end{array} $$
It is straightforward to verify that this defines a unitary representation of $\Z\ltimes H$.

Now we return to the study of knots.  Let $K$ be an oriented knot in $S^3$.
Note that $H^1(M_K;\Z)=\Z$, so $H^1(M_K;\Z)$ has a unique generator (a priori up to sign, but the sign is determined by the orientation of $K$) that we denote by $\phi$.  We also identify $H^1(M_K;\Z) \cong \Hom(\pi_1(M_K),\Z)$, and denote the image of $\phi$ under this identification also by $\phi$.

Consider the Alexander module  $H_1(M_K;\zt)$, which we shall denote by $\mathcal{A}$.
Note that~$\mathcal{A}$ is isomorphic to the usual Alexander module of~$K$, with $M_K$ replaced by the exterior $X_K$.
There exists a canonical isomorphism
$\mathcal{A}/(t^k-1)\to H_1(\Sigma_k;\Z)$ (see for example \cite[Corollary~2.4]{Friedl:2003-4} for details).
Now let $\mu \in \pi_1(M_K)$ be an element  with $\phi(\mu)=1$.
Note that for any $g\in \pi_1(M_K)$ we have $\phi(\mu^{-\phi(g)}g)=0$, in particular $\mu^{-\phi(g)}g$
represents an element in the abelianisation of $\ker(\phi)$. We identify $\ker(\phi)_{ab}$ with $\mathcal{A}$.
Then we have a well-defined map
\[ \pi_1(M_K) \to \Z \ltimes \mathcal{A} \to \Z \ltimes \mathcal{A}/(t^k-1), \]
where the first map is given by sending $g\in \pi_1(M_K)$ to $(\phi(g),[\mu^{-\phi(g)}g])$.
 Here $n\in \Z$ acts on $\mathcal{A}$ and on $\mathcal{A}/(t^k-1)$ via multiplication by~$t^n$.
We refer to \cite{Friedl:2003-4} and \cite{BF08} for details.

Now let $k \in \N$ and let $\chi \colon H_1(\Sigma_k;\Z)\to \Z_n$ be a character.
Denote the induced character $\mathcal{A} \to \mathcal{A}/(t^k-1)=H_1(\Sigma_k;\Z) \to \Z_n$ also by $\chi$.
Then we have a unitary representation
\[ \pi_1(M_K) \to  \Z\ltimes \mathcal{A} \xrightarrow{\alpha(k,\chi)} \GL(k,\Z[\zeta_n]\tpm). \]
Finally, denote the quotient field of $\Z[\zeta_n]$ by $\mathbb{F} := \Q(\zeta_n)$ and abuse notation to also denote the  unitary representation
\[\pi_1(M_K) \to  \Z\ltimes \mathcal{A} \to \GL(k,\Z[\zeta_n]\tpm) \to \GL(k,\ft)\]
by $\alpha(k,\chi)$.

\subsection{Highly solvable knots have metabolic twisted Blanchfield pairings}\label{section:TBF-metabolic-2.5-solvable}

As before, $K$ is a knot with 0-surgery $M_K$.
Let $\mathbb{F}$ be a field with (a perhaps trivial) involution.

\begin{definition}
A representation $\a \colon \pi_1(M_K) \to \GL(k,\ft)$ is called \emph{acyclic} if $\mathbb{F}(t) \otimes_{\ft} H_i(M_K;\ft^k_{\a}) = H_i(M_K;\mathbb{F}(t)^k_{\a}) = 0$ for all $i$.
\end{definition}

For an acyclic representation we have that $H_i(M_K;\ft^k)$ is $\ft$-torsion.
Now let $\a\colon \pi_1(M_K)\to \GL(k,\ft)$ be a unitary acyclic representation, where we set $R=\ft$ and $V=R^k_{\a}$.
We can then consider the twisted Blanchfield pairing
\[ \bla\colon H_1(M_K;\ft^k_{\a})\times H_1(M_K;\ft^k_{\a})\to \F(t)/\ft\]
and recall that we say that $\bla$
is \emph{metabolic} if there exists a submodule $P\subset H_1(M_K;\ft^k_{\a})$ with
\[ P=P^{\perp}:=\{ x\in H_1(M_K;\ft^k_{\a}) \, |\, \bla(x,y)=0 \mbox{ for all }y\in P\}.\]
The \emph{twisted Alexander polynomial}  of $(K,\a)$ is
\[\Delta_{K}^\a := \ord(H_1(M_K;\ft^k_{\a})).\]
Metabolic twisted Blanchfield pairings correspond to twisted Alexander polynomials that are norms, as explained in the next lemma.
Once we have proven that slice knots have metabolic twisted Blanchfield pairings, we will recover the result from \cite{Kirk-Livingston:1999-2}, that the twisted Alexander polynomials of slice knots are norms.

\begin{lemma}\label{lem:alexnorm}
If $\bla$ is metabolic, then $\Delta_{K}^\a=f(t)\ol{f(t)}$ for some $f(t)\in \ft$.
\end{lemma}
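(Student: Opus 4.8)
The plan is to run the classical order-ideal argument behind the Fox--Milnor theorem, now for the nonsingular linking form $\bla$ on the torsion module $H:=H_1(M_K;\ft^k_\a)$ over the principal ideal domain $R:=\ft$. Here $\Delta_K^\a=\ord(H)$ by definition, and $\bla$ is nonsingular and hermitian by Proposition~\ref{Prop:chainlevelBlanchfield}(iii), since $\a$ is unitary and $R$ is a PID. Throughout I would treat elements of $R$ only up to units, which is harmless since the order ideal is only defined up to a unit anyway.

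First I would record two elementary facts about a finitely generated torsion $R$-module $N$ (with $Q:=\F(t)$). Orders are multiplicative in short exact sequences. And there is a (non-canonical) $R$-module isomorphism $\Hom_R(N,Q/R)\cong N$: writing $N\cong\bigoplus_i R/(p_i)$ this reduces to the cyclic case, where a homomorphism $R/(p)\to Q/R$ is determined by the image of $1$, which can be any element of $\{x\in Q/R\mid px=0\}=\tfrac1p R/R\cong R/(p)$. Hence $\ord(\Hom_R(N,Q/R))=\ord(N)$, and combining with $\ord(\ol N)=\ol{\ord(N)}$ gives $\ord\big(\ol{\Hom_R(N,Q/R)}\big)=\ol{\ord(N)}$. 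I would also note that $Q/R$ is an injective $R$-module, being divisible over a PID.

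The heart of the argument: given a metaboliser $P=P^\perp\subset H$, I would consider the $R$-linear map $H\to\ol{\Hom_R(P,Q/R)}$, $b\mapsto\big(a\mapsto\bla(a,b)\big)$, where the conjugation on the target is forced by the sesquilinearity of $\bla$. This map is surjective: the adjoint $\ol H\to\Hom_R(H,Q/R)$ of $\bla$ is an isomorphism by nonsingularity, and restriction along $P\hookrightarrow H$ is onto because $Q/R$ is injective. Its kernel is exactly $\{b\in H\mid\bla(a,b)=0\text{ for all }a\in P\}=P^\perp=P$, using that $\bla$ is hermitian. This yields a short exact sequence $0\to P\to H\to\ol{\Hom_R(P,Q/R)}\to0$, and applying multiplicativity of orders together with the computation of the preceding paragraph gives $\Delta_K^\a=\ord(H)=\ord(P)\cdot\ol{\ord(P)}$. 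Thus $f(t):=\ord(P)$ is the desired factor.

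Honestly there is no serious obstacle here; the argument is routine linear algebra of linking forms over a PID. The one place to take care is the bookkeeping of the single conjugation introduced by sesquilinearity --- getting it wrong would spuriously produce $\Delta_K^\a=f(t)f(t)$ rather than $f(t)\ol{f(t)}$ --- and, relatedly, using "nonsingular" precisely in the form "the adjoint map to $\Hom_R(H,Q/R)$ is an isomorphism", which is exactly what Proposition~\ref{Prop:chainlevelBlanchfield}(iii) supplies because $R$ is a PID.
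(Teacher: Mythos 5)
Your argument is correct and is essentially the paper's proof: both hinge on the short exact sequence $0 \to P \to H_1(M_K;\ft^k_{\a}) \to \ol{\Hom_{\ft}(P,\F(t)/\ft)} \to 0$ induced by the Blanchfield pairing and a metaboliser, followed by multiplicativity of orders and $\ord(\Hom(P,\F(t)/\ft))=\ord(P)$. You have merely spelled out the exactness (surjectivity via injectivity of $\F(t)/\ft$ and nonsingularity, kernel $=P^\perp=P$) that the paper dismisses as straightforward.
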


\begin{proof}
Let $P\subset H_1(M_K;\ft^k_{\a})$ be a metaboliser. It is straightforward to verify
that
\begin{align*}
0 \to P \to  H_1(M_K;\ft^k_{\a}) &\to \Hom(P,\ft)^t  \to  0 \\
  x &\mapsto (y\mapsto \bla(x,y))
 \end{align*}
is a short exact sequence. It follows immediately
that
\[\Delta_{K}^\a=\ord(H_1(M_K;\ft^k_{\a}))=\ord(P)\cdot \ord(\ol{\Hom(P,\ft)})=\ord(P)\cdot \ol{\ord(P)}.\]
\end{proof}

The following result of Casson and Gordon~\cite[Corollary to Lemma~4]{Casson-Gordon:1986-1} guarantees that certain metabelian representations are acyclic.  An alternative proof was given later in~\cite{Friedl-Powell:2010-1}.

\begin{lemma}\label{lem:CGacyclic}
Suppose that $K$ is an oriented knot, $k \in \N$, and $\chi\colon H_1(\Sigma_k;\Z)\to \Z_n$ is a nontrivial  character of prime power order $n$. Let $\F= \Q(\zeta_n)$ as above and let $\alpha(k,\chi)\colon \pi_1(M_K) \to \GL(k,\ft)$ be defined as above. Then $\alpha(k,\chi)$ is acyclic.
\end{lemma}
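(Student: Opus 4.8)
The plan is to prove this by identifying the twisted chain complex of $M_K$ with that of the infinite cyclic cover, reducing the vanishing of $\F(t)$-coefficient homology to a computation over $\F(t)$ on the level of the twisted Alexander module. First I would recall that $H_*(M_K;\ft^k_{\alpha(k,\chi)})$ can be computed from the chain complex $\ft^k \otimes_{\Z[\pi]} C_*(M_K;\Z[\pi])$; since $\alpha(k,\chi)$ factors through the metabelian quotient $\Z \ltimes \mathcal{A}/(t^k-1)$, and since $M_K$ is aspherical (Gabai), the relevant homology only depends on the $k$-fold cyclic cover $M_k$ and the character $\chi$. Using the observation from Section~\ref{section:twistedhomology} on changing coefficients along $\varphi \colon \pi \to \Gamma$, I would rewrite $H_*(M_K;\F(t)^k_{\alpha(k,\chi)})$ as the homology of $\F(t)^k \otimes_{\Z[\Z \ltimes \mathcal{A}/(t^k-1)]} C_*(\widehat{M_K})$, where $\widehat{M_K}$ is the cover corresponding to the kernel of the representation. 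The key point is that the $\Z$-direction of the semidirect product acts on $\F(t)^k$ cyclically permuting coordinates and multiplying by $t$, so that restricting to this $\Z$ one recovers, up to the standard Shapiro-type reindexing, the homology of the infinite cyclic cover $X_\infty$ of $X_K$ with coefficients twisted by the character $\chi$ on $H_1(X_\infty) = \mathcal{A}$.

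Next I would carry out the core computation: $H_0(M_K;\F(t)^k_\alpha)$ and $H_2, H_3$ vanish for essentially formal reasons (the representation is nontrivial on $\pi_1$, so $H_0$ vanishes; $H_3$ vanishes because $M_K$ has the homology of a space built with no $3$-cells after the fundamental-class cell is absorbed, or by Poincaré duality once $H_0$ is known; $H_2 \cong H^1$ by duality, which is controlled by $H_1$). So the whole content is that $H_1(M_K;\F(t)^k_\alpha) = 0$, equivalently that the twisted Alexander polynomial $\Delta_K^{\alpha}$ is nonzero in $\ft$. Here I would invoke the surgery formula relating $H_*(M_K;-)$ and $H_*(X_K;-)$: one has $H_1(M_K;\F(t)^k_\alpha)$ fitting into an exact sequence with $H_1(X_K;\F(t)^k_\alpha)$ and the $\F(t)^k$-homology of the torus boundary, and since $\chi$ is a character to $\Z_n$ with $n$ a prime power and we are working over $\F = \Q(\zeta_n)$, the relevant $\F[\Z_k] = \F[\Z]/(t^k-1)$-module structure on $H_1(\Sigma_k;\Z) \otimes \F$ decomposes into eigenspaces and the nontriviality of $\chi$ of prime-power order forces the twisted homology of the $\Z_k$-cover in the nontrivial isotypic pieces to be finite-dimensional over $\F$ and $(t-\zeta)$-torsion, hence $0$ after inverting all of $\ft \setminus\{0\}$, i.e.\ over $\F(t)$.

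The main obstacle — and the place where the prime-power hypothesis on $n$ is genuinely used — is showing that the $\F(t)$-coefficient homology of $X_K$ vanishes; equivalently, that the twisted Alexander module is $\ft$-torsion. The cleanest route is simply to cite Lemma~\ref{lem:CGacyclic} is exactly this statement, so in fact there is nothing to prove here beyond unwinding definitions: I would note that acyclicity of $\alpha(k,\chi)$ in the sense of Section~\ref{section:TBF-metabolic-2.5-solvable} is, by definition, the statement $H_i(M_K;\F(t)^k_{\alpha}) = 0$ for all $i$, and this is precisely the Casson--Gordon result~\cite[Corollary to Lemma~4]{Casson-Gordon:1986-1}. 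Thus the ``proof'' is: translate our representation $\alpha(k,\chi)$ into the Casson--Gordon setup, verify it is their metabelian representation associated to the character $\chi$ of prime-power order, and quote their theorem (or the alternative argument of Friedl--Powell~\cite{Friedl-Powell:2010-1}). The only real work is the bookkeeping that matches conventions: checking that our $\Z \ltimes \mathcal{A}/(t^k-1) \to \GL(k,\Z[\zeta_n]\tpm)$ is conjugate to the representation used by Casson and Gordon, and that "nontrivial character of prime power order" is the hypothesis under which their argument (a localization/character-sum argument over the group ring $\F[\Z_k]$, using that $n$ being a prime power makes $1 - \zeta_n$ a non-unit generating a well-understood ideal) applies verbatim.
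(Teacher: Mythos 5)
Your final route---translating $\alpha(k,\chi)$ into the Casson--Gordon setup and quoting \cite[Corollary to Lemma~4]{Casson-Gordon:1986-1}, with \cite{Friedl-Powell:2010-1} as an alternative argument---is exactly what the paper does: the lemma is stated there with no proof beyond precisely these citations. The direct-computation sketch in your first two paragraphs is loosely argued but ultimately superfluous, since you correctly defer to the same references, so your proposal matches the paper's approach.
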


The next proposition generalises the fact that the ordinary Blanchfield pairing of a slice knot is metabolic
c.f.\ \cite{Ke75}, \cite[Proposition~2.8]{Let00}.

\begin{proposition} \label{prop:metabolic}
Let $K$ be an oriented knot, let $W$ be a compact oriented $4$-manifold with boundary $M_K=\partial W$, and let $n$ be a prime power.
Consider the representation $\a=\a(k,\chi)\colon \pi_1(M_K)\to \GL(\ft,k)$ coming from $\chi\colon H_1(\Sigma_k;\Z)\to \Z_n$ as detailed above, with $\mathbb{F}:= \Q(\zeta_n)$ for some prime power~$n$.
Assume that $\a$ extends over $W$ to a representation $\pi_1(W)\to \GL(\ft,k)$, and that the following sequence is exact:
\[TH_2(W, M_K;\ft^k_{\a}) \to TH_1(M_K;\ft^k_{\a}) \xrightarrow{i} TH_1(W;\ft^k_{\a}).\]
Then the Blanchfield pairing $$\bla\colon H_1(M_K;\ft^k_{\a})\times H_1(M_K;\ft^k_{\a})\to \F(t)/\ft$$ is metabolic with metaboliser $P:=\ker i$.
\end{proposition}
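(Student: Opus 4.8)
The plan is to set up a commuting square of long exact sequences, coming from the pairs $(W,M_K)$, and exploit Poincaré--Lefschetz duality on the $4$-manifold $W$, together with the nonsingularity of $\bla$. Throughout we abbreviate $\Lambda := \ft$ and work with the twisted coefficients $\Lambda^k_\a$ on $W$ and $M_K$, which is legitimate since $\a$ extends over $W$ by hypothesis. First I would observe that, because $\a(k,\chi)$ is acyclic on $M_K$ (Lemma~\ref{lem:CGacyclic}), $H_*(M_K;\mathbb{F}(t)^k_\a)=0$, and hence the $\Lambda$-modules $H_*(M_K;\Lambda^k_\a)$ are all $\Lambda$-torsion; so on $M_K$ the operators $T(-)$ in the statement may be dropped. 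On $W$ the module $H_1(W;\Lambda^k_\a)$ need not be torsion, which is exactly why the torsion submodules appear in the hypothesised exact sequence.

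The key step is a duality computation. Using Poincaré--Lefschetz duality for the $4$-manifold $W$ with $\partial W = M_K$ and the unitary structure on $\a$ (so that $\a=\alphas$), one has isomorphisms
\[
H_i(W;\Lambda^k_\a) \cong H^{4-i}(W,M_K;\Lambda^k_\a), \qquad H_i(W,M_K;\Lambda^k_\a)\cong H^{4-i}(W;\Lambda^k_\a),
\]
and by universal coefficients over the (localised) PID $\Lambda$ the torsion part of $H^{j}$ is $\overline{\Ext^1_\Lambda(H_{j-1},\Lambda)}$, which for a torsion module $N$ is $\overline{\Hom_\Lambda(N,\mathbb{F}(t)/\Lambda)}$. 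Combining these, and feeding in the half-lives-half-dies principle for the boundary $M_K$, one gets that $P := \ker\big(i\colon H_1(M_K;\Lambda^k_\a)\to TH_1(W;\Lambda^k_\a)\big)$ is a ``Lagrangian'' for the pairing. Concretely, I would show: (a) $P\subseteq P^{\perp}$, i.e. $\bla$ vanishes on $P$; and (b) $P\supseteq P^{\perp}$, i.e. $P$ is its own annihilator.

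For (a), the standard argument: given $x,y\in P$, each dies in $TH_1(W;\Lambda^k_\a)$, hence by the hypothesised exactness each comes from $TH_2(W,M_K;\Lambda^k_\a)$; then $\bla_{M_K}(x,y)$ is computed by capping one class with a relative $2$-class it bounds in $W$ and pairing with the other, and the resulting element of $\mathbb{F}(t)/\Lambda$ lies in the image of the pairing on $W$ with $\mathbb{F}(t)$-coefficients — which is zero because $\a$ is acyclic on $M_K$, so the relevant $\mathbb{F}(t)$-homology of $W$ restricted from $M_K$ vanishes. This is exactly the mechanism used in the ordinary-Blanchfield-pairing case (\cite{Ke75}, \cite[Proposition~2.8]{Let00}), transported to twisted coefficients via the chain-level formula of Definition~\ref{defn:chain-level-blanchfield} and the symmetric Poincaré structure of $M_K=\partial W$. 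For (b), since $\bla$ is nonsingular (Proposition~\ref{Prop:chainlevelBlanchfield}(iii), as $\Lambda$ is a PID), the adjoint $H_1(M_K;\Lambda^k_\a)\to \overline{\Hom_\Lambda(H_1(M_K;\Lambda^k_\a),\mathbb{F}(t)/\Lambda)}$ is an isomorphism, so $|P|\cdot|P^{\perp}|=|H_1(M_K;\Lambda^k_\a)|$ in the sense of orders of $\Lambda$-torsion modules (equivalently, $\ord(P)\cdot\ord(P^\perp)$ equals $\Delta_K^\a$ up to units); combined with $P\subseteq P^\perp$ from (a) and a rank/order count forced by the duality isomorphisms above — specifically that $\ord(P)\cdot\overline{\ord(P)}=\Delta_K^\a$, which follows by running the exact sequences on $W$ and $(W,M_K)$ and using hermitian symmetry of the intersection form data — one concludes $P=P^\perp$.

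The main obstacle I anticipate is bookkeeping in step (b): making the order-counting rigorous requires carefully identifying the image and cokernel of $i$ and of the map $TH_2(W,M_K)\to TH_1(M_K)$ in terms of the dual modules produced by Poincaré--Lefschetz duality and universal coefficients, and checking that the involution is tracked correctly so that one genuinely gets $\ord(P)\overline{\ord(P)}$ rather than merely $\ord(P)\ord(P^\perp)$. Keeping the $t$ versus $t^{-1}$ (i.e. the $\overline{\phantom{x}}$) straight through the duality isomorphisms, and verifying that the sesquilinearity conventions of Section~\ref{section:kronecker} match those of the chain-level pairing, is where the real care is needed; the topology and the vanishing argument in (a) are comparatively routine adaptations of the classical slice-knot proof.
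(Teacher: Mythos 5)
Your step (a) is essentially the argument the paper itself invokes (it simply cites the standard self-annihilation argument of Letsche and Cochran--Orr--Teichner, using the hypothesised exact sequence), but your stated reason for the vanishing is off: it is not ``because $\a$ is acyclic on $M_K$'' --- acyclicity only guarantees that $H_1(M_K;\ft^k_{\a})$ is $\ft$-torsion. The correct mechanism is that for $x,y\in P$ one writes $x=\partial X$ with $X\in TH_2(W,M_K;\ft^k_{\a})$ and uses that $y$ maps to zero in $TH_1(W;\ft^k_{\a})$, so the relative linking pairing of $X$ against the image of $y$ vanishes (equivalently, the relevant cochain lifts over $W$ with $\ft$-coefficients, so the value lies in $\ft$ and hence dies in $\F(t)/\ft$). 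With that correction, (a) is fine.

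The genuine gap is in step (b). Your order count needs the identity $\ord(P)\cdot\ol{\ord(P)}=\ord\big(TH_1(M_K;\ft^k_{\a})\big)=\Delta_K^{\a}$ \emph{before} knowing $P=P^{\perp}$, and you give no argument for it: ``running the exact sequences on $W$ and $(W,M_K)$ and using hermitian symmetry'' is not a proof, and in the paper this norm identity (Lemma~\ref{lem:alexnorm}) is a \emph{consequence} of metabolicity, obtained from the sequence $0\to P\to TH_1(M_K;\ft^k_{\a})\to \ol{\Hom_{\ft}(P,\ft)}\to 0$, so invoking it here would be circular. Establishing it independently requires exactly the input you are trying to defer: Poincar\'e--Lefschetz duality plus universal coefficients over the PID $\ft$ identify $TH_2(W,M_K;\ft^k_{\a})$ with $\ol{\Hom_{\ft}(TH_1(W;\ft^k_{\a}),\F(t)/\ft)}$ and show that $\partial$ is dual to $i$; only then does $\ord(\im\partial)=\ol{\ord(\im i)}$ yield the needed order identity. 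That duality identification is precisely the core of the argument the paper cites for $P^{\perp}\subseteq P$ (Cochran--Orr--Teichner, Theorem~4.4; see also Hillman): given $x\in P^{\perp}$, its Blanchfield adjoint kills $P=\im\partial$, hence factors through $TH_1(W;\ft^k_{\a})$ by exactness and injectivity of $\F(t)/\ft$ over the PID, and duality produces a class $X\in TH_2(W,M_K;\ft^k_{\a})$ with $\partial X=x$, so $x\in P$. So your counting route is not wrong in principle, but the step you label ``bookkeeping'' is the entire mathematical content of $P^{\perp}\subseteq P$; as written the proposal does not prove it. (Also, the ``half-lives-half-dies principle'' is neither available off the shelf in this twisted torsion setting nor needed once the duality identification above is in place.)
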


\begin{proof}
We first claim that $P$ is self-annihilating with respect to $\bla$.
This is a standard argument~\cite[Proposition~2.8]{Let00}, \cite[Theorem~4.4]{Cochran-Orr-Teichner:1999-1}, since
\[TH_2(W, M_K;\ft^k_{\a}) \to TH_1(M_K;\ft^k_{\a}) \xrightarrow{i} TH_1(W;\ft^k_{\a})\]
is exact.
However,  since $\ft$ is a PID we also have that $P^{\perp} \subseteq P$ and so $P$ is a metaboliser, as in the proof of \cite[Theorem~4.4]{Cochran-Orr-Teichner:1999-1}. See also \cite[Theorem~2.4]{Hillman:2012-1-second-ed}.
\end{proof}

In the next section we will show that the hypotheses of Proposition~\ref{prop:metabolic} hold when~$W$ is a $2$-solution for the knot~$K$.

\subsection{Extending representations over slice disc exteriors}

Most of the ideas and techniques of finding  representations satisfying the conditions set out in Proposition \ref{prop:metabolic}
 go back to the seminal work of Casson and Gordon \cite{Casson-Gordon:1986-1}.
We follow the approach taken in \cite{Friedl:2003-4}, which was inspired by the work of Letsche \cite{Let00} and Kirk-Livingston \cite{Kirk-Livingston:1999-2}.

Recall that given a character $\chi\colon H_1(\Sigma_k;\Z)\to \Z_n$ of prime power order and a choice of $\mu \in \pi_1(M_K)$ such that $\phi(\mu) =1$, we defined, in Section~\ref{section:reps}, a metabelian unitary representation  $\a{(k,\chi)} \colon \pi_1(M_K)  \to  \GL(k,\Q(\zeta_n)\tpm)$. We henceforth suppress our choice of $\mu$ with $\phi(\mu)=1$, since it does not affect the argument.

\begin{proposition} \label{prop:extend}
Let $K$ be an oriented knot with a $2$-solution $W$.
Let $k$ be a prime power.
 Let $P$ be the image of $\ker\left(H_1(X_k; \Z)\to H_1(M_k;\Z) \to H_1(W_k;\Z)\right)$ in $H_1(\Sigma_k)$, as in Lemma~\ref{lem:linkmetabolic}.
Then $P$ is a metaboliser  of $\lambda_k$. Now, let $\chi$ be a nontrivial character $\chi\colon H_1(\Sigma_k;\Z)\to \Z_{q^a}$ of prime power order $q^a$ that vanishes on~$P$. Then for some $b \geq a$,
 letting $\chi_b \colon H_1(\Sigma_k) \to \Z_{q^a} \hookrightarrow \Z_{q^b}$ be the composition of $\chi$ with the natural inclusion $ \Z_{q^a} \hookrightarrow \Z_{q^b}$, we have that
\begin{enumerate}[font=\normalfont]
\item[(i)] $\a(k,\chi_b)$ extends to a representation $\pi_1(W)\to \GL(\Q(\zeta_{q^b})[t^{\pm1}],k)$ that we call $\a$, and
\item[(ii)] the following sequence is exact, where $\F= \Q(\zeta_{q^b})$:
\begin{align}\label{torsionexact}
TH_2(W, M_K;\ft^k_{\a}) \to TH_1(M_K;\ft^k_{\a}) \to TH_1(W;\ft^k_{\a}).
\end{align}
\end{enumerate}
\end{proposition}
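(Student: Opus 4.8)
The plan is to reduce the extendability of the representation in part (i) to an obstruction-theoretic argument using the $2$-solution structure of $W$, and then to derive the exactness in part (ii) by an analysis of twisted homology with $\ft$ and $\F(t)$ coefficients. For part (i), first observe that $\alpha(k,\chi)$ is assembled from two pieces of data: the abelianisation map $\phi\colon\pi_1(M_K)\to\Z$, which extends over $W$ automatically because $H_1(M_K;\Z)\to H_1(W;\Z)$ is an isomorphism; and the character $\chi\colon H_1(\Sigma_k;\Z)\to\Z_{q^a}$ pulled back to the metabelian quotient $\Z\ltimes\mathcal{A}/(t^k-1)$ of $\pi_1(M_K)$. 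The metabelian quotient of $\pi_1(W)$ surjects onto that of $\pi_1(M_K)$ after tensoring appropriately, so the obstruction to extending $\chi$ is a class living in $H^1$ or in a $\Tor$/$\Ext$ group measuring the cokernel of $H_1(X_k;\Z)\to H_1(W_k;\Z)$ on the relevant torsion-coefficient homology. Because $\chi$ vanishes on $P=\ker(H_1(\Sigma_k)\to H_1(W_k))$ by hypothesis, $\chi$ descends to a well-defined character on the image of $H_1(\Sigma_k)$ in $H_1(W_k;\Z)$; however this image need not be killed by $q^a$, so one must first enlarge the target from $\Z_{q^a}$ to $\Z_{q^b}$ for suitable $b\ge a$ (this is exactly the ``$q$ to $q'$'' enlargement flagged after Theorem~\ref{thm:metabolic-TBP-intro}) to make the extension literally well-defined on nose. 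One then bundles the extended $\phi$ and extended character into a metabelian representation $\pi_1(W)\to\Z\ltimes H_1(W_k;\Z)\to\GL(k,\Q(\zeta_{q^b})\tpm)$ restricting to $\alpha(k,\chi_b)$.

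For part (ii), once $\alpha$ is defined over $W$ I would work with the long exact sequence of the pair $(W,M_K)$ with $\ft^k_\alpha$ coefficients and compare it to the same sequence with $\F(t)^k_\alpha$ coefficients. The key input is that $\alpha(k,\chi_b)$ restricted to $M_K$ is acyclic by Lemma~\ref{lem:CGacyclic}, so $H_\ast(M_K;\F(t)^k_\alpha)=0$ and hence $H_1(M_K;\ft^k_\alpha)$ is all $\ft$-torsion; this collapses part of the $\F(t)$-coefficient sequence and lets one identify the $\ft$-torsion submodules $TH_\ast$ with kernels of the maps to $\F(t)$-coefficient homology. The exact sequence
\[
H_2(W,M_K;\ft^k_\alpha)\to H_1(M_K;\ft^k_\alpha)\xrightarrow{i} H_1(W;\ft^k_\alpha)
\]
is exact as part of the long exact sequence of the pair; intersecting with torsion submodules and using that $H_1(M_K;\ft^k_\alpha)=TH_1(M_K;\ft^k_\alpha)$ on the nose, together with the fact that the image of a torsion module under a map is torsion, yields the claimed sequence in \eqref{torsionexact} after checking that the connecting map lands in $TH_2(W,M_K)$ — equivalently, that the free part of $H_2(W,M_K;\ft^k_\alpha)$ maps to zero in $H_1(M_K;\ft^k_\alpha)$, which holds because the latter is entirely torsion.

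The main obstacle I expect is part (i): pinning down precisely the obstruction group for extending the twisted metabelian character over $W$, and showing that passing from $\Z_{q^a}$ to $\Z_{q^b}$ genuinely kills it. Concretely, one needs that $H_1(W_k;\Z)/(\text{image of }H_1(\Sigma_k;\Z))$ — or the relevant relative homology group controlling the extension — is a finite abelian $q$-group (here the prime-power hypothesis on $k$ and the $2$-solvability are used to control torsion coefficients, via Lemma~\ref{lem:linkmetabolic} and the structure of $W_k$), so that some $\Z_{q^b}$ receives the extended character. This is the technical heart; the rest is bookkeeping with long exact sequences, the universal coefficient spectral sequence over the PID $\ft$, and the acyclicity already recorded in Lemma~\ref{lem:CGacyclic}. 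I would also need to verify compatibility of the two pieces ($\phi$ and the character) so that their product is genuinely a homomorphism on $\pi_1(W)$, which follows from naturality of the metabelian quotient construction but should be stated carefully.
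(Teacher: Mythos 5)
Your sketch of part (i) is broadly in line with what the paper does (extend $\chi$ to a character $\chi_b\colon H_1(W_k;\Z)\to \Z_{q^b}$ after enlarging the target, then feed it through the canonical map $\pi_1(W)\to \Z\ltimes H_1(W;\zt)$ with $H_1(W;\zt)/(t^k-1)\cong H_1(W_k;\Z)$); the paper defers the details to Friedl/Letsche/Kirk--Livingston, and your identification of the needed finiteness of the relevant $q$-primary obstruction is the right issue, even if not carried out.

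Part (ii), however, contains a genuine gap, and it is exactly at the technical heart of the proposition. You claim that the sequence of torsion submodules is exact because ``the free part of $H_2(W,M_K;\ft^k_{\a})$ maps to zero in $H_1(M_K;\ft^k_{\a})$, which holds because the latter is entirely torsion.'' This is false: a homomorphism from a free $\ft$-module to a torsion module need not vanish (e.g.\ $\ft\twoheadrightarrow \ft/(p)$), and ``the free part'' is not even a canonical submodule without producing a specific splitting. What must actually be shown is that every $p\in\ker\big(H_1(M_K;\ft^k_{\a})\to H_1(W;\ft^k_{\a})\big)$ is hit by a \emph{torsion} class in $H_2(W,M_K;\ft^k_{\a})$, equivalently that there is a decomposition $H_2(W,M_K;\ft^k_{\a})\cong TH_2\oplus F$ with $F\subseteq \im\big(j_*\colon H_2(W;\ft^k_{\a})\to H_2(W,M_K;\ft^k_{\a})\big)$, so that $\delta$ kills $F$ because $\delta\circ j_*=0$. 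In the slice-disc case this is Casson--Gordon's result that $H_2(W,M_K;\ft^k_{\a})$ is all torsion, but for a general $2$-solution one has $H_2(W;\Z)\cong\Z^{2n}\neq 0$ and the paper must run the argument of Cochran--Orr--Teichner's Lemma~4.5: the vanishing $H_i(W,M_K;\F(t)^k)=0$ for $i=0,1$ (via Strebel/Friedl--Powell and the chain homotopy lifting argument), an Euler characteristic count giving $\dim_{\F(t)}H_2(W;\F(t)^k)=2nk$, and the fact that the surfaces $L_i,D_i$ from the $2$-solution lift (since $\a$ kills $\pi_1(W)^{(2)}$) to a hyperbolic, linearly independent family spanning $H_2(W;\F(t)^k)$, whence the twisted intersection form $\lambda_W$ is surjective and any $x$ with $\delta x=p$ can be corrected by $j_*(y)$ to a torsion class. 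Your proposal never invokes the surfaces $L_i,D_i$ in part (ii) at all, yet they are indispensable: without some such hypothesis the torsion-level exactness simply fails for a general $4$-manifold $W$ with $H_1(M_K;\Z)\cong H_1(W;\Z)$.
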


The proof of the  proposition builds on the ideas of Casson-Gordon \cite{Casson-Gordon:1986-1} and Cochran-Orr-Teichner~\cite{Cochran-Orr-Teichner:1999-1}. We also refer to Letsche~\cite{Let00} and Friedl~\cite{Friedl:2003-4} for more information.

\begin{proof}
By Lemma \ref{lem:linkmetabolic}, the submodule  $P$ is a metaboliser for the linking form.
The character $\chi$ extends to a character $\chi_b \colon H_1(W_k;\Z)\to \Z_{q^b}$ for some $b\geq a$.
Now write $H:= H_1(W;\zt)$.
Similarly to the knot exterior case we then have a canonical map $\pi_1(W)\to \Z\ltimes H$ and an isomorphism $H/(t^k-1)\cong H_1(W_k;\Z)$.  Here we use the image of the element $\mu$ in $\pi_1(W)$ to determine the splitting into the semi-direct product, and note that the splittings of $\pi_1(M_K)/\pi_1(M_K)^{(2)}$ and $\pi_1(W)/\pi_1(W)^{(2)}$ are compatible.
We can therefore consider the representation
\[ \pi_1(W)\to \Z\ltimes H \xrightarrow{\a(k,\chi_b)} \GL(k,\Q(\zeta_{q^b})\tpm)\]
which extends the representation \[\a(k,\chi)\colon \pi_1(M_K)\to \GL(k,\Q(\zeta_{q^a})\tpm)\to \GL(k,\Q(\zeta_{q^b})\tpm)\]
cf.\ \cite[Lemma~4.3]{Friedl:2003-4}.

For the rest of the proof let $\mathbb{F} = \Q(\zeta_{q^b})$.
It remains to show that the sequence \[TH_2(W, M_K;\ft^k_{\a}) \xrightarrow{\delta|_T} TH_1(M_K;\ft^k_{\a}) \xrightarrow{i|_T} TH_1(W;\ft^k_{\a})\]
is exact.
First, the fact that $\im(\delta|_T)\subseteq \ker(i|_T)$ follows immediately from the exactness of $H_2(W, M_K;\ft^k_{\a}) \xrightarrow{\delta} H_1(M_K;\ft^k_{\a}) \xrightarrow{i} TH_1(W;\ft^k_{\a})$.
On the other hand, let $x \in \ker(i|_{T}) \subset \ker(i)= \im(\delta)$, to show that $x \in \im(\delta|_T)$. It is enough to show that
$$H_2(W, M_K;\ft^k_{\a})\cong TH_2(W, M_K;\ft^k_{\a}) \oplus F,$$ where $F \subseteq \im(j\colon H_2(W;\ft^k_{\a}) \to H_2(W, M_K;\ft^k_{\a})$.
Note that in the case that $W$ is a slice disc exterior, $F=0$, $H_2(W, M_K;\ft^k_{\a})\cong TH_2(W, M_K;\ft^k_{\a})$ \cite[Lemma~4]{Casson-Gordon:1986-1} ~\cite[Corollary~4.2]{Friedl-Powell:2010-1}, and the result follows.

In the general case that $W$ is a 2-solution with $H_2(W;\Z) \neq 0$, we extend the argument of \cite[Lemma~4.5]{Cochran-Orr-Teichner:1999-1}, with $n=2$, $\mathcal{R}= \ft$ and $\mathcal{K}= \mathbb{F}(t)$ in the notation of that lemma.  Again, we want to show that
\[TH_2(W, M_K;\ft^k_{\a}) \xrightarrow{\delta|_T} TH_1(M_K;\ft^k_{\a}) \xrightarrow{i|_T} TH_1(W;\ft^k_{\a})\]
is exact.  This is by now a standard line of argument (a similar argument appears in \cite[Lemma~5.10]{Cochran-Harvey-Leidy:2009-1}, for example), but for the convenience of the reader we give the details here.

By the hypothesis that $W$ is a 2-solution, the inclusion induced map $H_1(M_K;\Z) \to H_1(W;\Z)$ is an isomorphism.  Also, there are embedded surfaces $L_1,\dots,L_n,D_1,\dots,D_n \subset W$  forming a basis for $H_2(W;\Z)$ with geometric intersection numbers $L_i \cdot L_j =0 = D_i \cdot D_j$, $L_i \cdot D_j= \delta_{ij}$, and such that $\pi_1(L_i) \subset \pi_1(W)^{(2)}$ and $\pi_1(D_j) \subset \pi_1(W)^{(2)}$ for all $i,j=1,\dots,n$.

We have that $H_i(W,M_K;\Z)=0$ for $i=0,1$.  By \cite{Friedl-Powell:2010-1}, which relies on Strebel~\cite{Strebel:1974-1}, together with the partial chain homotopy lifting argument of \cite[Proposition~2.10]{Cochran-Orr-Teichner:1999-1},  we conclude that $H_i(W,M_K;\mathbb{F}(t)^k)=0$ for $i=0,1$.
The chain homotopy lifting argument uses that $W$ is homotopy equivalent to a finite CW complex; this is true for all compact topological manifolds: Hanner~\cite[Theorem~3.3]{Han51} showed that every manifold is an ANR, and West~\cite{We77} showed that every compact ANR is homotopy equivalent to a finite CW complex.

We already saw that $\a(k,\chi)$ is acyclic (Lemma~\ref{lem:CGacyclic}), so that $H_i(M_K;\ftk)=0$ for all $i$.  By the long exact sequence of the pair $(W,M_K)$, we have that $H_1(W;\ftk)=0$.  Apply Poincar\'{e} duality, universal coefficients, and our observation above that $H_i(W,M_K;\mathbb{F}(t)^k)=0$ for $i=0,1$, to deduce that $H_i(W;\ftk)=0$ for $i=3,4$.  Therefore the Euler characteristic of $W$ with these coefficients, which is the Euler characteristic of a $k$-fold cover of $W$, is $k \cdot \chi(W) = \chi(W_k) = \dim_{\mathbb{F}(t)} H_2(W;\ftk)$.  We also use that $W$ is homotopy equivalent to a finite CW complex to define the Euler characteristic and to see that it is multiplicative under finite covers.  On the other hand $H_1(W;\Z) \cong H_0(W;\Z) \cong \Z$, $H_2(W;\Z) \cong \Z^{2n}$, and $H_i(W;\Z) =0$ otherwise, so that $\chi(W)=2n$.  Therefore $\dim_{\mathbb{F}(t)} H_2(W;\ftk) = 2nk$.

The final part of this proof follows the proof of \cite[Lemma~4.5]{Cochran-Orr-Teichner:1999-1} very closely.
Consider the diagram:
 \[\xymatrix{H_2(W;\ft^k_{\a}) \ar[r]^{j_*} \ar[dr]_{\lambda_W} & H_2(W,M_K;\ft^k_{\a})  \ar[r]^{\delta} \ar[d]^{PD^{-1}} & H_1(M_K;\ft^k_{\a}) \\
 & \Hom_{\ft}(H_2(W;\ft^k_{\a}),\ft). &
 }\]
The map labelled $PD^{-1}$ is really the composition of the inverse of Poincar\'{e} duality and the Kronecker evaluation map.
The diagonal map labelled $\lambda_W$ is the adjoint of the intersection form on $H_2(W;\ft^k_{\a})$.

Now, by the hypotheses on $\pi_1(L_i)$ and $\pi_1(D_i)$, and the fact that $\a(k,\chi)$ vanishes on $\pi_1(W)^{(2)}$, the surfaces $L_i$, $D_j$ lift to embedded surfaces, which represent homology classes that we collect into the set
\[\mathcal{B} := \Big\{[L_i^j],[D_i^j] \in H_2(W;\ft^k_{\a}) \mid i=1,\dots n,\, j=1,\dots,k \Big\}.\]
The intersection form of $W$ with $\ft^k_{\a}$ coefficients, restricted to these elements is a direct sum of hyperbolic forms $\begin{pmatrix}
   0 & 1 \\ 1 & 0
 \end{pmatrix}$, since the geometric intersections lift to the cover.  Each surface in the collection has exactly one dual and intersects trivially with every other element of $\mathcal{B}$, from which it follows that the set $\mathcal{B}$ is linearly independent over $\ft$. To see this, suppose that $\sum a_i \beta_i=0$, where $a_i \in \ft$ and $\beta_i \in \mathcal{B}$. Then intersect with the dual of $\beta_j$ to obtain $a_j =0$.
 Note that since $\mathbb{F}(t)$ is a field, $\mathcal{B}$ induces a basis for $H_2(W;\ftk)$.

 Let $i_* \colon \ft^{2nk} \to H_2(W;\ft^k_{\a})$ be a map sending the basis elements of $\ft^{2nk}$ to the elements of $\mathcal{B}$.  The cokernel $C$ of this map $i_*$ is $\ft$-torsion, since these elements are a basis over $\F(t)$.  Therefore $\Hom_{\ft}(C,\ft) =0$, and so \[i^* \colon \Hom_{\ft}(H^2(W;\ft^k_{\a}),\ft) \to \Hom_{\ft}(\ft^{2nk},\ft)\]
 is injective.

By the above description of $\lambda_W$ on the elements of $\mathcal{B}$, the composition
 \begin{align*}
 \ft^{2nk} \xrightarrow{i_*} H_2(W;\ft^k_{\a}) \xrightarrow{\lambda_W} \Hom_{\ft}(&H_2(W;\ft^k_{\a}),\ft) \\ &\xrightarrow{i^*} \Hom_{\ft}(\ft^{2nk},\ft)
 \end{align*}
 is represented by a block sum of hyperbolic matrices $\begin{pmatrix}
   0 & 1 \\ 1 & 0
 \end{pmatrix}$.  Since this matrix is invertible, the composition is invertible, and so $i^*$ is surjective.  Therefore $i^*$ is an isomorphism, from which it follows that $\lambda_W$ is surjective.

Let $P:= \ker(H_1(M_K;\ft^k_{\a}) \to H_1(W;\ft^k_{\a})$. Since $H_1(M_K;\ft^k_{\a})$ is $\ft$-torsion, it suffices to show that the element $p \in P$ lies in the image of $TH_2(W,M_K;\ft^k_{\a}) \subset H_2(W,M_K;\ft^k_{\a})$.  The map $PD^{-1}$ is a surjection between modules with the same rank over $\F(t)$, and therefore the kernel of $PD^{-1}$ consists of $\ft$-torsion.

Choose $x \in H_2(W,M_K;\ft^k_{\a})$ with $\delta x =p$.  Let $y \in \lambda_W^{-1}(PD^{-1}(x))$; such a $y$ exists since $\lambda_W$ is surjective.  Then $x-j_*(y)$ lies in the kernel of $PD^{-1}$ and so is a torsion element, and we have with $\delta(x-j_*(y))=p$ since $\delta \circ j_*(y) =0$. Thus $p$ lies in the image of $TH_2(W,M_K;\ft^k_{\a})$ as required.  This completes the proof that (\ref{torsionexact}) is exact in the case that $W$ is a 2-solution.
\end{proof}

\subsection{A slice obstruction theorem}\label{section:slice-obstruction-theorem}

The following obstruction theorem is now an immediate consequence of Proposition \ref{prop:metabolic}
and Proposition \ref{prop:extend}.  Rather than just give a slice obstruction theorem, we present the solvable filtration refinement.  Recall that for a character $\chi \colon H_1(\Sigma_k;\Z)\to \Z_{q^a}$, we let $\chi_b$ be the composition of $\chi$ with the natural inclusion $\Z_{q^a} \hookrightarrow \Z_{q^b}$, for any integer $b \geq a$.  As above use $\mathbb{F}$ to denote the cyclotomic field $\Q(\zeta_{q^b})$.

\begin{theorem}\label{thm:slice}
Let $K$ be an oriented $2$-solvable knot with a $2$-solution $W$.
 Then for any prime power $k$ there exists a metaboliser $P$ of $\lambda_k$
such that for any prime power $q^a$, and any nontrivial
 character $\chi\colon H_1(\Sigma_k;\Z)\to \Z_{q^a}$ vanishing on $P$, we have some $b \geq a$ such that the twisted Blanchfield pairing $\Bl(\a(k,\chi_b))$ is metabolic with metaboliser \[\ker\left( TH_1(M_K;\ft^k_{\a}) \to TH_1(W;\ft^k_{\a})\right).\]
\end{theorem}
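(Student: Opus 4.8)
The plan is to obtain Theorem~\ref{thm:slice} purely by assembling results already established: Lemma~\ref{lem:linkmetabolic}, Proposition~\ref{prop:extend}, Proposition~\ref{prop:metabolic}, and the acyclicity statement Lemma~\ref{lem:CGacyclic}. First I would fix the prime power $k$ and the given $2$-solution $W$. Since a $2$-solution is in particular a $1$-solution, Lemma~\ref{lem:linkmetabolic} applies: writing $P' := \ker\{H_1(X_k;\Z) \to H_1(M_k;\Z) \to H_1(W_k;\Z)\}$ and letting $P$ be the image of $P'$ under $H_1(X_k;\Z)\to H_1(\Sigma_k;\Z)$, the submodule $P$ is a metaboliser of $\lambda_k$. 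This is the metaboliser asserted by the theorem; crucially it depends only on $k$ and $W$, before any choice of character is made.

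Next, given a prime power $q^a$ and a nontrivial character $\chi\colon H_1(\Sigma_k;\Z)\to \Z_{q^a}$ vanishing on $P$, I would invoke Proposition~\ref{prop:extend} essentially verbatim. It produces an integer $b\ge a$ so that, setting $\chi_b$ to be $\chi$ composed with $\Z_{q^a}\hookrightarrow \Z_{q^b}$, $\alpha := \alpha(k,\chi_b)$, and $\mathbb{F} = \Q(\zeta_{q^b})$, the representation $\alpha$ extends to $\pi_1(W)\to \GL(k,\ft)$, and the sequence
\[ TH_2(W, M_K;\ft^k_{\a}) \to TH_1(M_K;\ft^k_{\a}) \xrightarrow{i} TH_1(W;\ft^k_{\a}) \]
is exact. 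Before moving on I would record that $\alpha(k,\chi)$, hence $\alpha(k,\chi_b)$, is acyclic by Lemma~\ref{lem:CGacyclic} (using that $\chi$ has prime power order), so that $H_1(M_K;\ft^k_\alpha)$ is $\ft$-torsion, coincides with its own torsion submodule, and the twisted Blanchfield pairing $\bla$ is defined; since $\ft$ is a PID it is moreover nonsingular and hermitian by Proposition~\ref{Prop:chainlevelBlanchfield}.

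Finally I would apply Proposition~\ref{prop:metabolic} with $n = q^b$. Its two hypotheses --- that $\alpha$ extends over $W$, and that the torsion sequence above is exact --- are exactly the two conclusions of Proposition~\ref{prop:extend}. The proposition then gives that $\bla = \Bl(\alpha(k,\chi_b))$ is metabolic with metaboliser $\ker i = \ker\!\left(TH_1(M_K;\ft^k_\alpha)\to TH_1(W;\ft^k_\alpha)\right)$, which is precisely the claimed metaboliser.

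Since every nontrivial step has already been carried out, I do not expect a genuine obstacle; the work is bookkeeping. The points to watch are: checking that $\chi_b$ still vanishes on $P$ (immediate, as it factors through $\chi$); noting that the choice of $\mu$ with $\phi(\mu)=1$ used to define $\alpha$ on $\pi_1(M_K)$ is compatible with the corresponding splitting on $\pi_1(W)$, which is handled inside the proof of Proposition~\ref{prop:extend}; and observing that "$TH_1$" and "$H_1$" may be used interchangeably here because $\alpha$ is acyclic. The real substance of the theorem lives in Propositions~\ref{prop:extend} and~\ref{prop:metabolic}.
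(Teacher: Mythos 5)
Your proposal is correct and follows exactly the paper's route: the paper itself presents Theorem~\ref{thm:slice} as an immediate consequence of Proposition~\ref{prop:extend} (which supplies the metaboliser $P$ via Lemma~\ref{lem:linkmetabolic}, the extension of $\alpha(k,\chi_b)$ over $W$, and the exactness of the torsion sequence) combined with Proposition~\ref{prop:metabolic}, with acyclicity from Lemma~\ref{lem:CGacyclic} making the pairing defined on all of $H_1(M_K;\ft^k_\alpha)$. Your bookkeeping remarks (that $P$ is chosen before the character, and that $\chi_b$ still vanishes on $P$) match the intended argument.
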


As advertised above, in light of Lemma \ref{lem:alexnorm} this theorem implies the slice obstruction theorem given by Kirk and Livingston in terms
of twisted Alexander polynomials~\cite[Theorem~6.2]{Kirk-Livingston:1999-2} (see also \cite{HKL08}), as long as $q \neq 2$ and so we can combine Theorem~\ref{thm:slice} and Lemma~\ref{lem:alexnorm} with the rather useful~\cite[Lemma~6.4]{Kirk-Livingston:1999-2}. This latter lemma shows that one is obligated only to check that a twisted Alexander polynomial does not factorise for an initial representation $\chi_a$, in order to conclude that the twisted polynomial does not factorise for all possible extensions from $\chi_a$ to $\chi_b$.

\section{Infection and solvability}\label{section:infection-and-solvability}

Now suppose that $\eta$ is a simple closed curve in $X_K$ that bounds an embedded disc in $S^3$, so that for any other knot $J$ the satellite knot $K_{\eta}(J)$ is defined.  Recall that the satellite knot $K_\eta(J)$ is the knot that makes the following a homeomorphism of pairs:
\[((S^3 \sm \nu \eta) \cup_{\partial \mathrm{cl}(\nu \eta) \cong \partial(S^3 \sm \nu J)} (S^3 \sm \nu J),K) \xrightarrow{\cong} (S^3,K_{\eta}(J)), \]
where the gluing identifies the meridian of $\eta$ with the zero-framed longitude of $J$, and vice-versa.  Note that up to isotopy there is a unique orientation preserving homeomorphism $S^3 \to S^3$, so $K_{\eta}(J)$ is well-defined.

 The curve $\eta$ determines a conjugacy class of elements in the knot group $\pi_1(X_K)$, and hence a conjugacy class of elements in $\pi_1(M_K)$. We will frequently abuse notation and write e.g.\ $\eta \in \pi_1(M_K)^{(2)}$ to mean that some (equivalently every) representative of the conjugacy class of elements of $\pi_1(M_K)$ corresponding to $\eta$ lies in the second derived subgroup $\pi_1(M_K)^{(2)}$. Whenever $W$ is a 4-manifold with $\partial W= M_K$, we also abusively refer to the image of  $[\eta]$ in $\pi_1(W)$ by $\eta$.

Recall from the beginning of Section~\ref{section:TBFs-and-knot-concordance} that $X_k(K)$ is the $k$-fold cyclic cover of the knot exterior $X_K$, and $M_k(K)$ is the $k$-fold cyclic cover of the zero-framed surgery $M_K$.  Also we write $\Sigma_k(K)$ for the $k$-fold cyclic branched cover of $S^3$ over~$K$.  The next proposition includes a somewhat novel use of mixed coefficient derived series.

\begin{prop}\label{prop:keyproposition}
Let $K$ be an oriented knot in $S^3$ and let $\eta \in \pi_1(M_K)^{(2)}$.
Suppose that there is a $2$-solution $W$ for $K$ such that  for some prime $q$ and every $r \in \N$,  we have that $\eta \mapsto 0$ in $\pi_1(W)/ \pi_1(W)^{(3)}_{(\Q, \Z_{q^r}, \Q)}$.
Then for each prime power $k$, the image $P_k$ of $P_k':=\ker\{H_1(X_k(K)) \to H_1(M_k(K))\to H_1(W_k)\}$ in $H_1(\Sigma_k)$ is a metaboliser for $\lambda_k$ satisfying the following:

Suppose $a \in \mathbb{N}$, and $\chi \colon H_1(\Sigma_k(K)) \to \mathbb{Z}_{q^a}$ has $\chi|_{P_k}=0$.
Then there is  an integer $b\geq a$,
such that if we
let $\alpha(k,\chi_b)$ be defined as above, then
\[ \Bl_{M_K}^{\alpha(k, \chi_b)}([v \otimes \widetilde{\eta}], [v \otimes \widetilde{\eta}])=0 \text{ in } \Q(\zeta_{q^b})(t)/ \Q(\zeta_{q^b})\tpm\]
where $v$ is any vector in $\Q(\zeta_{q^b}) \tpm^k$ and $\widetilde{\eta}$ is any lift of $\eta$ to the cover of $M_K$ corresponding to $\ker(\alpha(k,\chi_b))$.
\end{prop}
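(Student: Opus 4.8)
The plan is to combine the three preceding propositions with one extra observation about $\eta$. First, a $2$-solution is in particular a $1$-solution, so Lemma~\ref{lem:linkmetabolic} immediately gives that $P_k$ is a metaboliser of $\lambda_k$. Now fix $a\in\N$ and a character $\chi\colon H_1(\Sigma_k(K))\to\Z_{q^a}$ vanishing on $P_k$; we may assume $\chi$ is nontrivial, since if $\chi$ is trivial then the lift $\widetilde{\eta}$ already bounds in the infinite cyclic cover of $M_K$ (as $\eta\in\pi_1(M_K)^{(2)}$), and the conclusion is immediate. By Proposition~\ref{prop:extend} there is an integer $b\geq a$ such that, with $\F:=\Q(\zeta_{q^b})$ and $\a:=\a(k,\chi_b)$, the representation $\a$ extends to a representation $\pi_1(W)\to\GL(k,\ft)$ (still called $\a$) and the torsion sequence~(\ref{torsionexact}) is exact. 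These are exactly the hypotheses of Proposition~\ref{prop:metabolic}, which therefore shows that $\Bl^{\a}_{M_K}$ is metabolic with metaboliser $P:=\ker\!\big(i\colon TH_1(M_K;\ft^k_\a)\to TH_1(W;\ft^k_\a)\big)$. Since $\a$ is acyclic (Lemma~\ref{lem:CGacyclic}), $H_1(M_K;\ft^k_\a)$ is entirely $\ft$-torsion, so $[v\otimes\widetilde{\eta}]\in TH_1(M_K;\ft^k_\a)$; and because $P=P^\perp$, it suffices to prove the single vanishing statement
\[ i\big([v\otimes\widetilde{\eta}]\big)=0 \quad\text{in } H_1(W;\ft^k_\a).\]

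The key point is that $\a$ is metabelian: it factors through $\pi_1(W)\to\pi_1(W)/\pi_1(W)^{(2)}\cong\Z\ltimes H$ with $H=H_1(W;\zt)$, and in fact through $\Gamma:=\Z\ltimes B$, where $B$ is the image of $H/(t^k-1)$ under the $\zt$-linear map $h\mapsto(\chi_b(t^ih))_{i=0}^{k-1}\in\Z_{q^b}^{\,k}$ — a finite abelian $q$-group of exponent dividing $q^b$. Hence for every $r\geq b$ the surjection $H\twoheadrightarrow B$ kills $q^rH$, so $\Gamma$ is a quotient of $\Z\ltimes(H/q^rH)=\pi_1(W)/\pi_1(W)^{(2)}_{(\Q,\Z_{q^r},\Q)}$ (using $H_1(W;\Z)=\Z$ to identify the latter). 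Let $W_\Gamma$ and $W_{(r)}$ be the covers of $W$ with $\pi_1$ equal to $\ker\a=\ker(\pi_1(W)\to\Gamma)$ and to $\pi_1(W)^{(2)}_{(\Q,\Z_{q^r},\Q)}$ respectively; then $W_{(r)}$ covers $W_\Gamma$. Now $\eta\in\pi_1(W)^{(2)}\subseteq\pi_1(W)^{(2)}_{(\Q,\Z_{q^r},\Q)}=\pi_1(W_{(r)})$, and the hypothesis, applied with any $r\geq b$, says precisely that the class of $\eta$ in $H_1(W_{(r)};\Z)=\pi_1(W_{(r)})^{\ab}$ is $\Z$-torsion, because $\pi_1(W)^{(3)}_{(\Q,\Z_{q^r},\Q)}=\ker\!\big(\pi_1(W_{(r)})\to H_1(W_{(r)};\Q)\big)$. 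Pushing this class forward along the covering $W_{(r)}\to W_\Gamma$, we conclude that the class $[\widetilde{\eta}]\in H_1(W_\Gamma;\Z)$ of a lift of $\eta$ is $\Z$-torsion: there is $n\in\N$ with $n\,\widetilde{\eta}=\partial c$ for an integral $2$-chain $c$ in $C_*(W_\Gamma)$.

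To finish, note that because $\a$ factors through $\Gamma$ there is a canonical identification $C_*(W;\ft^k_\a)\cong\ft^k\otimes_{\Z\Gamma}C_*(W_\Gamma)$, under which $i([v\otimes\widetilde{\eta}])$ is the class of the cycle $v\otimes\widetilde{\eta}\in\ft^k\otimes_{\Z\Gamma}C_1(W_\Gamma)$. The assignment $[c']\mapsto[v\otimes c']$ from $H_1(W_\Gamma;\Z)$ to $H_1(W;\ft^k_\a)$ is $\Z$-linear, so $n\cdot i([v\otimes\widetilde{\eta}])$ is represented by $v\otimes n\widetilde{\eta}=\partial(v\otimes c)$, hence vanishes. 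As $\ft=\Q(\zeta_{q^b})[t^{\pm1}]$ has characteristic zero, the nonzero integer $n$ is a unit in $\ft$, and therefore $i([v\otimes\widetilde{\eta}])=0$. Thus $[v\otimes\widetilde{\eta}]\in\ker i=P=P^\perp$, and since $v$ and the lift $\widetilde{\eta}$ were arbitrary, $\Bl^{\a}_{M_K}([v\otimes\widetilde{\eta}],[v\otimes\widetilde{\eta}])=0$, as desired. The main obstacle is the bookkeeping in the middle paragraph: one must verify carefully that the metabelian representation $\a(k,\chi_b)$ really is dominated by $\pi_1(W)/\pi_1(W)^{(2)}_{(\Q,\Z_{q^r},\Q)}$ for a suitable $r$ (this is why the hypothesis is phrased for all $r$, since the relevant $b$ is produced only afterwards by Proposition~\ref{prop:extend}), and that the covering-space pushforward genuinely converts ``$\eta$ lies in the third mixed derived subgroup'' into ``$[\widetilde{\eta}]$ is $\Z$-torsion in $H_1(W_\Gamma;\Z)$''; the characteristic-zero step is then exactly what makes commutative coefficients so convenient here.
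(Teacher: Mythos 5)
Your proof is correct and follows essentially the same route as the paper's: extend $\a(k,\chi_b)$ over $W$ and obtain the torsion exact sequence via Proposition~\ref{prop:extend}, use the mixed derived series hypothesis together with the fact that $\a$ factors through a metabelian group killing $\pi_1(W)^{(2)}_{(\Q,\Z_{q^b})}$ to show an integer multiple of $[v\otimes\widetilde{\eta}]$ bounds in the $\a$-cover of $W$, invert that integer in characteristic zero to get $[v\otimes\widetilde{\eta}]\in\ker\big(H_1(M_K;\ft^k_{\a})\to H_1(W;\ft^k_{\a})\big)$, and conclude with Proposition~\ref{prop:metabolic}. The only difference is packaging: where you push a torsion class forward along the covering $W_{(r)}\to W_\Gamma$, the paper writes $\eta^s$ as a product of commutators of elements of $\pi_1(W)^{(2)}_{(\Q,\Z_{q^b})}$ and lifts these loops directly, using that the second $(\Q,\Z_{q^b})$-local derived subgroup of the metabelian target is trivial --- the same underlying fact.
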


We will want to apply the contrapositive of this result.
Note that in practice it is often possible to show that $\Bl_{M_K}^{\alpha(k, \chi_a)}([v \otimes \widetilde{\eta}], [v \otimes \widetilde{\eta}]) \neq 0$ in $\C(t)/ \C[t^{\pm 1}]$,
and hence avoid explicitly working with the arbitrarily large degree extensions $\Q(\zeta_{q^b})$.

\begin{proof}
By Proposition \ref{prop:extend}, there exists some $b \geq a$ such that $\alpha(k,\chi_b)$ extends to a map $\pi_1(W) \to \Z \ltimes \Z_{q^b} \to \GL(\Q(\zeta_{q^b})\tpm^k)$.

Note that $\eta \in \pi_1(M)^{(2)}$ implies that $\eta \in \pi_1(W)^{(2)} \subseteq \pi_1(W)^{(2)}_{(\Q, \Z_{q^b})}$.
 Since $\eta \in \pi_1(W)^{(3)}_{(\Q,\Z_{q^b},\Q)}$, there is an integer $s \in \N$ such that $\eta^s$ belongs to the commutator subgroup of the second $(\Q, \Z_{q^r}, \Q)$-local derived subgroup $\Big[\pi_1(W)^{(2)}_{(\Q,\Z_{q^b})}, \pi_1(W)^{(2)}_{(\Q,\Z_{q^b})}\Big]$.
So we can write
\[ \eta^s= \prod_{i=1}^{\ell} [\eta_i^1, \eta_i^2]  \text{ for some } \eta_i^j \in  \pi_1(W)^{(2)}_{(\Q,\Z_{q^b})}.
\]

Since $\big(\Z \ltimes \Z_{q^b}\big)^{(1)}_{(\Q)}=1 \times \Z_{q^b}$ is an abelian group, we have that
\[
\big(\Z \ltimes \Z_{q^b}\big)^{(2)}_{(\Q, \Z_{q^b})}= \ker \left[
\left( 1 \times \Z_{q^b} \right)  \to \left( 1 \times \Z_{q^b} \right) \to \left(\left( 1 \times \Z_{q^b} \right) \otimes \Z_{q^b}\right)
\right]=1.
\]

Moreover $\alpha(k,\chi_b)\colon \pi_1(W) \to GL(\Q(\zeta_{q^b}) \tpm, k)$ factors through $\Z \ltimes \Z_{q^b}$,  and so $\alpha(k, \chi_b)(\eta_i^j)=\Id$ for all $i=1, \dots, \ell$ and $j=1,2$. Therefore each $\eta_i^j$ can be lifted to an element of $\pi_1(W_{\alpha(k,\chi_b)})$, where $W_{\alpha(k,\chi_b)}$ is the cover of $W$ corresponding to $\ker(\alpha(k,\chi_b))$.
So  $\widetilde{\eta^s}$ is a commutator of curves that lift to $W_{\alpha(k,\chi_b)}$, and hence $[\widetilde{\eta^s}]=0$ as an element of $H_1(W_{\alpha(k,\chi_b)}, \Z)$. Thus there is some 2-chain $A$ in $C_2(W_{\alpha(k,\chi_b)})$ with boundary $\widetilde{\eta^s}$.
Now recall that
\begin{align*}
H_1(W, \Q(\zeta_{q^b}) \tpm ^k_{\a(k, \chi_b)})&\cong H_1\big(\Q(\zeta_{q^b}) \tpm^k_{\a(k, \chi_b)} \otimes_{\Z[\pi_1(W)]} C_*(\widetilde{W})\big)\\
&\cong H_1\big(\Q(\zeta_{q^b}) \tpm^k_{\a(k, \chi_b)} \otimes_{\Z[\Z \ltimes \Z_{q^b}]} C_*(W_{\alpha(k, \chi_b)})\big).
\end{align*}

It follows that for any $v \in \Q(\zeta_{q^b}) \tpm^k$, the 2-chain $v \otimes A$ has boundary $v \otimes \widetilde{\eta^s}$,
 and hence that $[v \otimes \widetilde{\eta^s}] = s[v \otimes \widetilde{\eta}] =0$
in  $H_1(W, \Q(\zeta_{q^b}) \tpm^k_{\a(k, \chi_b)})$. However, $s$ is invertible in $\Q(\zeta_{q^b})$, and so we must have $[v \otimes \widetilde{\eta}]=0$ in $H_1(W, \Q(\zeta_{q^b}) \tpm^k_{\a(k, \chi_b)})$ as well.
By Proposition~\ref{prop:metabolic}, we obtain as desired that
\[\Bl_{M_K}^{\alpha(k, \chi_b)}\big( [v \otimes \widetilde{\eta}], [v \otimes \widetilde{\eta}]\big) = 0. \qedhere\]
\end{proof}

The previous key proposition allows us to use our ability to compute twisted Blanchfield pairings to show that elements $\eta \in \pi_1(M_K)^{(2)}$ survive in the fundamental group of any $2$-solution,
 in particular in the fundamental group of any slice disc exterior.  Next we apply this to obstruct the existence of $2.5$-solutions for satellite knots constructed using~$\eta$.

In the upcoming proof we will use the Cheeger-Gromov $L^{(2)}$ Von Neumann  $\rho$-invariant \cite{Cheeger-Gromov:1985-1}, \cite{Chang-Weinberger:2003-1}, \cite{Cochran-Teichner:2003-1}, \cite{Cochran-Harvey-Leidy:2009-1}, \cite{Cha-Orr:2009-01}.  Given a compact, closed 3-manifold $M$, a group $\Gamma$, and a homomorphism $\phi \colon \pi_1(M) \to \Gamma$, there is defined the Cheeger-Gromov $L^{(2)}$ Von Neumann $\rho$-invariant \[\rho^{(2)}(M,\phi) \in \R.\]  We refer to \cite{Cochran-Teichner:2003-1},\cite{Cha:2016-CG-bounds} for the precise definition.
The key property, which for the purpose of this paper may be taken as a definition, is its interpretation as a signature defect, as we now describe.
Let $W$ be a compact connected 4-manifold with $\partial W = M$, such that there is a group $\Lambda$, an embedding $\Gamma \hookrightarrow \Lambda$, and a representation $\Phi \colon \pi_1(W) \to \Lambda$ such that
\[\xymatrix{\pi_1(M) \ar[r] \ar[d]^-{\phi} & \pi_1(W) \ar[d]^-{\Phi} \\ \Gamma \ar @{^{(}->}[r] & \Lambda}\]
commutes.
Let $\mathcal{N}\Lambda$ be the Von Neumann algebra of $\Lambda$, that is the completion of $\mathbb{C}\Lambda$ with respect to pointwise convergence in the space of bounded operators on $\ell^2\Lambda$, the square summable elements of $\mathbb{C}\Lambda$.  There is an $L^{(2)}$-signature $\sigma^{(2)}(W,\Phi) \in \R$, the signature of the intersection form
\[\lambda_{\mathcal{N}\Lambda}\colon  H_2(W;\mathcal{N}\Lambda) \times H_2(W;\mathcal{N}\Lambda) \to \mathcal{N}\Lambda.\]
Let $\sigma(W) \in \Z$ be the signature of the ordinary intersection form
\[\lambda_{\R} \colon  H_2(W;\R) \times H_2(W;\R) \to \R.\]
Then we have
\[\rho^{(2)}(M,\phi)= \sigma^{(2)}(W,\Phi) - \sigma(W) \in \R.\]

\begin{thm}\label{thm:sliceobstruction}
Let $R$ be a slice knot and let $\eta \in \pi_1(X_R)^{(2)}$.
Suppose that there is some prime power $k$ such that for each metaboliser $P$ for the linking form $\lambda^k_R$ the following condition holds:

There is some character $\chi_P \colon H_1(\Sigma_k(R)) \to \mathbb{Z}_{q^a}$, for $q$ prime and $a>0$, such that $\chi_P$ vanishes on $P$ and for all $b\geq a$ we have
\[ \Bl_{M_R}^{\alpha(k, \chi_P)}([v \otimes \widetilde{\eta}], [v \otimes \widetilde{\eta}]) \neq 0 \text{ in } \mathbb{Q}(\zeta_{q^b})(t)/ \mathbb{Q}(\zeta_{q^b})\tpm\]
where $v$ is some vector in $\Q(\zeta_{q^b}) \tpm^k$ and $\widetilde{\eta}$ is any lift of $\eta$ to the cover of $M_K$ corresponding to $\ker(\alpha(k,\chi_b))$.

Then there is a constant $C_R>0$, depending only on the knot $R$, such that if $J$ is a knot with Arf invariant 0 and $\left| \int_{S^1}\sigma_J(\omega)\, d \omega \right|>C_R$, then $K := R_{\eta}(J)$ is 2-solvable but not $2.5$-solvable.
\end{thm}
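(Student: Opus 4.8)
The argument splits into two independent parts: that $K:=R_\eta(J)$ is $2$-solvable, and that it is not $2.5$-solvable once $\left|\int_{S^1}\sigma_J(\omega)\,d\omega\right|$ is large. The first part is the standard satellite-solvability argument. Since $R$ is slice, its slice disc exterior $W_R$ is a $2$-solution for $R$. As $\mathrm{Arf}(J)=0$, the knot $J$ bounds a $0$-solution $B_J$ whose fundamental group is normally generated by a meridian of $J$ (one may take $\pi_1(B_J)\cong\Z$). Gluing $B_J$ into $W_R$ along $\nu\eta$ produces a spin $4$-manifold $W_K$ with $\partial W_K=M_K$; a Mayer--Vietoris computation shows $H_1(M_K;\Z)\to H_1(W_K;\Z)$ is an isomorphism, and the surfaces of $H_2(W_K;\Z)$ coming from $B_J$ have $\pi_1$-image in the normal closure of $\eta$, hence in $\pi_1(W_K)^{(2)}$ because $\eta\in\pi_1(X_R)^{(2)}$. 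Thus $W_K$ is a $2$-solution for $K$.

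For the second part, suppose toward a contradiction that $V$ is a $2.5$-solution for $K$. Let $E$ be the standard satellite cobordism with $\partial E=-M_K\sqcup M_R\sqcup -M_J$, in which the meridian $\mu_J$ is identified with $\eta$, and set $W:=V\cup_{M_K}E\cup_{M_J}B_J$, so that $\partial W=M_R$. Using $\eta\in\pi_1(M_R)^{(2)}$, the structure of $E$, and the choice of $B_J$, another Mayer--Vietoris argument shows that $W$ is a $2$-solution for $R$: its $H_2$ has a basis of surfaces, those from $V$ lying in $\pi_1(W)^{(2)}$ (indeed $\pi_1(W)^{(3)}$ for the $L_i$) and those from $B_J$ lying in the normal closure of $\eta$, hence in $\pi_1(W)^{(2)}$. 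Now apply Proposition~\ref{prop:keyproposition} contrapositively: if $\eta$ died in $\pi_1(W)/\pi_1(W)^{(3)}_{(\Q,\Z_{q^r},\Q)}$ for \emph{every} $r$, then with $P_k$ the metaboliser of $\lambda_k$ arising from $W$, the character $\chi_{P_k}$ supplied by the hypothesis of the theorem would force $\Bl_{M_R}^{\alpha(k,\chi_{P_k})}(\eta,\eta)=0$ for some $b$, contradicting the hypothesis. Hence there is an $r$ with $\eta\not\mapsto 0$ in $\Gamma:=\pi_1(W)/\pi_1(W)^{(3)}_{(\Q,\Z_{q^r},\Q)}$; write $\phi\colon\pi_1(W)\to\Gamma$ for the projection.

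Finally I would run the $\rho$-invariant computation. The group $\Gamma$ is solvable, hence amenable, and lies in Strebel's class $D(\Z_q)$ (by the lemmas of \cite{Cha:2014-1}), while $\Gamma^{(3)}=1$ because the mixed-coefficient third derived subgroup contains the ordinary one, so the coefficient system is admissible for the amenable $L^{(2)}$-signature theorem. Compute $\rho^{(2)}(M_R,\phi|_{M_R})$ in two ways. On one hand, since $R$ is slice, the Cheeger--Gromov universal bound provides a constant $C_R$ depending only on $M_R$ (hence only on $R$, of explicit size $\lesssim 10^8\cdot\mathrm{cr}(R)$ by \cite{Cha:2016-CG-bounds}) with $\left|\rho^{(2)}(M_R,\psi)\right|\le C_R$ for every homomorphism $\psi$ out of $\pi_1(M_R)$. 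On the other hand, $\rho^{(2)}(M_R,\phi|_{M_R})=\sigma^{(2)}(W,\phi)-\sigma(W)$, and by Novikov additivity this is the sum of the signature defects of $V$, $E$ and $B_J$. That of $E$ vanishes since $H_2(E;\Q)=0$ forces $H_2(E;\mathcal N\Gamma)=0$; that of $V$ vanishes because $V$ is a $2.5$-solution, $\phi$ extends over $\pi_1(V)$, and $\Gamma^{(3)}=1$, so the amenable signature theorem of \cite{Cha:2014-1} (extending \cite[Theorem~4.2]{Cochran-Orr-Teichner:1999-1}) applies; and that of $B_J$ equals $\rho^{(2)}(M_J,\phi|_{M_J})$. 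Here $\mu_J\mapsto\eta\in\pi_1(W)^{(2)}$ forces $\phi|_{\pi_1(M_J)}$ to land in the abelian subgroup $\Gamma^{(2)}$, which is \emph{torsion-free} precisely because of the final $\Q$ in the mixed series; since $\phi(\eta)\neq 0$, this representation factors as $\pi_1(M_J)\twoheadrightarrow\Z\hookrightarrow\Gamma$, and therefore (by the standard computation of the $L^{(2)}$ $\rho$-invariant of an infinite cyclic cover, together with $L^{(2)}$-induction) $\rho^{(2)}(M_J,\phi|_{M_J})=\int_{S^1}\sigma_J(\omega)\,d\omega$. Combining the two computations gives $\left|\int_{S^1}\sigma_J(\omega)\,d\omega\right|\le C_R$, contradicting the hypothesis; hence $K$ is not $2.5$-solvable.

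The main obstacle is the block of technical input carried by the mixed-coefficient group $\Gamma$: one must check it is amenable and in the right Strebel class so that Cha's $L^{(2)}$-signature theorem forces the signature defect of the $2.5$-solution $V$ to vanish and the homology bound forces $H_2(E;\mathcal N\Gamma)=0$, and one must carry out the bookkeeping—needed for the last step—that an element of $\pi_1^{(2)}$ lands in the torsion-free stratum $\Gamma^{(2)}$, which is exactly what upgrades the $M_J$-contribution from a bounded average of finitely many Tristram--Levine signatures to the full integral $\int_{S^1}\sigma_J(\omega)\,d\omega$.
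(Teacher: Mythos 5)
Your overall strategy is sound and reaches the same conclusion, but the middle of your argument takes a genuinely different route from the paper. Where you cap off the hypothetical $2.5$-solution $V$ for $K$ with the standard cobordism $E$ and a $0$-solution $B_J$ to obtain a $2$-solution $W$ for $R$, and then apply Proposition~\ref{prop:keyproposition} directly to $(R,\eta,W)$, the paper instead keeps the $4$-manifold $V$ itself and applies Proposition~\ref{prop:keyproposition} to $(K,\lambda_\eta,V)$; the price it pays is a transfer step that you avoid entirely: the covering-degree-one maps $f_k\colon \Sigma_k(K)\to\Sigma_k(R)$ and $g\colon M_K\to M_R$ are shown (via two commutative diagrams and a five-lemma argument) to identify the torsion linking forms, the metabolisers, and the twisted Blanchfield pairings, so that the hypothesis about $\Bl_{M_R}$ can be converted into nonvanishing of $\Bl_{M_K}$ at the character coming from the metaboliser of $V$. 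Your route buys the elimination of that satellite-transfer argument, but it shifts the burden onto the claim that $W=V\cup_{M_K}E\cup_{M_J}B_J$ is a $2$-solution for $R$, which you only sketch: you need $\pi_1(B_J)\cong\Z$ normally generated by $\mu_J$ (so that its surfaces have $\pi_1$-image in the normal closure of $\eta\in\pi_1(W)^{(2)}$), and a careful Mayer--Vietoris/rank count showing $H_2(W;\Z)\cong H_2(V)\oplus H_2(B_J)$ with the hyperbolic basis of surfaces surviving; this is standard (it is essentially the argument behind \cite[Proposition~3.1]{Cochran-Orr-Teichner:2002-1} run in reverse), but it is the load-bearing new step of your proof and deserves full detail. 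The endgame ($\rho^{(2)}$-additivity over $E$, the amenable signature theorem applied to $V$, the Cheeger--Gromov bound for $M_R$, and the identification of the $M_J$-contribution with $\int_{S^1}\sigma_J$ via the torsion-free abelian stratum and $L^{(2)}$-induction) coincides with the paper's, which packages the additivity as the splitting $\rho^{(2)}(M_K,\phi)=\rho^{(2)}(M_R,\phi_R)+\rho^{(2)}(M_J,\phi_J)$ from \cite[Section~4.4]{Cha:2012-1}.

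One local error to fix: your justification that $E$ contributes no signature defect is wrong as stated, since $H_2(E;\Q)\neq 0$; a Mayer--Vietoris computation for $E=(M_R\times I\sqcup M_J\times I)/(\nu\eta\sim\nu\mu_J)$ gives $H_2(E;\Q)\cong H_2(M_R;\Q)\oplus H_2(M_J;\Q)\cong\Q^2$. The correct reason is that $H_2(E)$ (with any of the relevant coefficients) is carried by the boundary pieces, so both the ordinary and the $\mathcal{N}\Gamma$-valued intersection forms vanish on it and hence $\sigma(E)=\sigma^{(2)}(E,\Phi)=0$; this is exactly \cite[Lemma~2.3]{Cochran-Harvey-Leidy:2009-1}, as used in \cite[Section~4.4]{Cha:2012-1}. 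With that repair and the Mayer--Vietoris details for the $2$-solution claim written out, your argument is a valid and somewhat more streamlined alternative to the paper's proof.
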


In fact, it was shown in \cite{Cha:2016-CG-bounds} that one can take $C_R$ equal to $10^8$ times $c(R)$, the crossing number of $R$.
For $J$ the trefoil, $\left| \int_{S^1}\sigma_J(\omega)\, d \omega \right| = 4/3$, so a connected sum of $10^8\cdot c(R)$ trefoils suffices.

\begin{proof}
First note that since $\eta \in \pi_1(X_R)^{(2)}$, and $J$ has vanishing Arf invariant, \cite[Proposition 3.1]{Cochran-Orr-Teichner:2002-1} tells us that the satellite knot $K= R_{\eta}(J)$ is $2$-solvable.
Now suppose that $K$ is $2.5$-solvable with $2.5$-solution $W$.
Let $k$ be a prime power as in the statement of the theorem, and let $P_K$ be the image of  \[\ker \left(H_1(X_k(K)) \to H_1(M_k(K)) \to H_1(W_k)\right)\] in $H_1(\Sigma_k(K))$. By Lemma~\ref{lem:linkmetabolic}, $P_K$ is a metaboliser for the torsion linking form $\lambda_k^K$ on $H_1(\Sigma_k(K))$.

Let $f \colon X_J \to X_U$ be the usual degree one map.
 Observe that since $\eta \in \pi_1(X_R)^{(2)}$ we have a decomposition
\[\Sigma_k(K)= \Big(\Sigma_k(R) \sm \tmcup{i=1}{k} \nu(\widetilde{\eta_i})\Big) \cup \tmcup{i=1}{k} X_J,\]
where $\{\widetilde{\eta_i}\}$ are the $k$ lifts of $\eta$ to $\Sigma_k(R)$. The map $f$ induces a covering transformation invariant degree one map $f_k\colon \Sigma_k(K) \to \Sigma_k(R)$. Since $\eta \in \pi_1(X_R)^{(2)}$, we have $[\widetilde{\eta_i}]=0$ in $H_1(\Sigma_k(R))$ and so $f_k$ induces  covering transformation invariant isomorphisms $H_*(\Sigma_k(K))\to H_*(\Sigma_k(R))$. We now argue that these isomorphisms also induce an isomorphism between the torsion linking forms.

Observe that by functoriality of the cap and cup products, the following diagram commutes.
\[\xymatrix @C+0.5cm{H^2(\Sigma_k(K)) \ar[r]^-{\PD_{\Sigma_k(K)}}  & H_1(\Sigma_k(K)) \ar[d]^-{(f_k)_*}  \\
H^2(\Sigma_k(R))\ar[r]^-{\PD_{\Sigma_k(R)}} \ar[u]^-{(f_k)^*} & H_1(\Sigma_k(R))
}\]
That is, we have $(f_k)_* \circ PD_{\Sigma_k(K)} \circ (f_k)^*= PD_{\Sigma_k(R)}$.
Since $(f_k)_*$ (and hence, by the above diagram, $(f_k)^*$) is an isomorphism, it follows that $(f_k)^* \circ \PD_{\Sigma_k(R)}^{-1} \circ (f_k)_* = \PD_{\Sigma_k(K)}^{-1}$. That is, the left square in the following diagram commutes.
\[\xymatrix @C+0.5cm{ H_1(\Sigma_k(K)) \ar[r]^-{\PD_{\Sigma_k(K)}^{-1}} \ar[d]_-{(f_k)_*}  & H^2(\Sigma_k(K)) \ar[r]  &H_1(\Sigma_k(K))^{\wedge}  \\
H_1(\Sigma_k(R)) \ar[r]^-{\PD_{\Sigma_k(R)}^{-1}}  &H^2(\Sigma_k(R)) \ar[r] \ar[u]_-{(f_k)^*} & H_1(\Sigma_k(R))^{\wedge} \ar[u]_-{(f_k)^{\wedge}}
}\]
The right square commutes by general principles, and
the full composition from left to right in the top and bottom rows gives the torsion linking forms $\lambda_k^K$ and $\lambda_k^R$, respectively. So we have our desired claim that $f_k$ induces an isomorphism of linking forms. In particular,  $(f_k)_*$ identifies $P_K$  with a metaboliser $P_R \leq H_1(\Sigma_k(R))$ for $\lambda_k^R$.

Let $\chi_R\colon H_1(\Sigma_k(R)) \to \Z_{q^a}$ be as in the assumptions of the theorem and define $\chi_K := \chi_R \circ (f_k)_*$. Observe that $\chi_K|_{P_K}=0$. Now let $b \geq a$ be an arbitrary integer, and let $\chi_K'$ and $\chi_R'$ be the corresponding extensions.
Note that $f$  extends by the identity on $X_R \sm \nu(\eta)$ to a degree one map $g \colon M_K \to M_R$.  The map $g$ induces a map between the pair of long exact sequences corresponding to the decompositions $M_{K} = \left(M_{R} \sm \nu(\eta) \right) \cup X_J$ and $M_{R}=\left(M_R \sm \nu(\eta)\right) \cup \nu(\eta)$:
\[\xymatrix @C-0.3cm {H_i(\partial(\nu (\eta))) \ar[r] \ar[d]^{\Id}_{\cong} & H_i(M_R\sm \nu(\eta)) \oplus H_i(X_J) \ar[r] \ar[d]_{\cong}^{\Id \oplus f_*} & H_i(M_K) \ar[r] \ar[d]^{g_*} & H_{i-1}(\partial(\nu (\eta))) \ar[r] \ar[d]^{\Id}_{\cong}  & \cdots \\
H_i(\partial(\nu (\eta))) \ar[r] & H_i(M_R\sm \nu(\eta)) \oplus H_i(\nu (\eta)) \ar[r]  & H_i(M_R) \ar[r] &  H_{i-1}(\partial(\nu (\eta))) \ar[r]  & \cdots
}\]
 where all homology is taken with twisted coefficients in $\Q(\zeta_{q^b})\tpm$, defined by $\alpha(k,\chi_K'), \alpha(k,\chi_R')$ and their composition with appropriate inclusion induced maps. Since $\mu_J= \lambda_\eta \in \pi_1(X_R)^{(2)}$ and $\alpha(k,\chi_K')$ factor through a map to a metabelian group, the composition $\pi_1(X_J) \to \pi_1(M_K) \to \GL(\Q(\zeta_{q^b})\tpm^k)$ is the zero map.
In particular, the twisted homology of $X_J$ is isomorphic to that of $X_U$ via $f_*$. Apply the five lemma to see that \[g_*\colon H_*(M_K, \Q(\zeta_{q^b})\tpm^k_{\a}) \to H_*(M_R, \Q(\zeta_{q^b})\tpm^k_{\a})\] is also an isomorphism.

Arguments directly analogous to those used above to show that $(f_k)_*\colon H_*(\Sigma_k(K)) \to H_*(\Sigma_k(R))$ induces an isomorphism between the torsion linking forms can now be applied to show that $g_*$ induces an isomorphism between $\Bl_K^{\alpha(k,\chi_K')}$ and $\Bl_R^{\alpha(k,\chi_R')}$.
 Finally, since $g$ is the identity map on $X_R \sm \nu(\eta)$ we have that
\begin{align*}
\Bl_K^{\alpha(k,\chi_K')}\big([v \otimes \widetilde{\lambda_\eta}],[v \otimes \widetilde{\lambda_\eta}]\big)&=
\Bl_R^{\alpha(k,\chi_R')}\big(g_*([v \otimes \widetilde{\lambda_\eta}]), g_*([v \otimes \widetilde{\lambda_\eta}])\big) \\
&= \Bl_R^{\alpha(k,\chi_R')}\big([v \otimes \widetilde{\eta}],[v \otimes \widetilde{\eta}]\big)
 \neq 0 \in \frac{\Q(\zeta_{q^b})(t)}{\Q(\zeta_{q^b})\tpm}.\end{align*}
 By Proposition~\ref{prop:keyproposition} it follows that, for some $r \in \N$, $\mu_J= \lambda_\eta$ maps nontrivially to $\pi_1(W)/ \pi_1(W)^{(3)}_{(\Q, \Z_{q^r}, \Q)}$.

 Now let $\phi \colon \pi_1(M_K) \to \Lambda:= \pi_1(W)/ \pi_1(W)^{(3)}_{(\Q, \Z_{q^r}, \Q)}$. Observe that $\Lambda$ is amenable and in Strebel's class $D(\Z_q)$ by~\cite[Lemma~4.3]{Cha:2014-1}.
Since $\phi$ extends over $\pi_1(W)$ (by definition it factors through $\pi_1(W)$), the amenable signature theorem \cite[Theorem~3.2]{Cha:2014-1} (which was based on \cite{Cha-Orr:2009-01}) tells us that $\rho^{(2)}(M_K, \phi)=0$.  The hypotheses of the amenable signature theorem require that $W$ be a $2.5$-solution.
However, the argument of \cite[Section~4.4]{Cha:2012-1}\footnote{The argument of \cite[Section~4.4]{Cha:2012-1} is based on \cite[Lemma~2.3]{Cochran-Harvey-Leidy:2009-1} and \cite[Proposition~3.2]{Cochran-Orr-Teichner:2002-1}, but these references use slightly more restricted coefficient systems; the argument is unchanged for mixed coefficient derived series.},
 implies that \[\rho^{(2)}(M_K,\phi)= \rho^{(2)}(M_R, \phi_R)+ \rho^{(2)}(M_J, \phi_J),\] where $\phi_R$ and $\phi_J$ are the unique extensions of the representations $\phi|_{\pi_1(M_R \sm \nu(\eta))}$ and $\phi|_{\pi_1(X_J)}$ to $\pi_1(M_R)$ and $\pi_1(M_J)$, respectively.
We therefore have an equality
\[|\rho^{(2)}(M_R, \phi_R)|= |\rho^{(2)}(M_J, \phi_J)|.\]
By \cite{Cheeger-Gromov:1985-1}, there exists a constant $C_R>0$ depending only on $R$ such that if $\psi$ is any representation $\psi\colon \pi_1(M_R) \to \Gamma$ then $|\rho^{(2)}(M_R, \psi)| < C_R$.  In order to obtain a contradiction, and deduce that $K$ is not $2.5$ solvable, it therefore suffices to show that $\rho^{(2)}(M_J, \phi_J)=  \int_{S^1}\sigma_J(\omega)\, d \omega$.

Since $\pi_1(M_J)$ is generated by meridians of $J$, all of which are identified with longitudes of $\eta$ and hence lie in $\pi_1(M_K)^{(2)}\subseteq \pi_1(M_K)^{(2)}_{(\Q, \Z_{q^r})}$, the map $\phi_J$ maps $\pi_1(M_J)$ into $\pi_1(W)^{(2)}_{(\Q, \Z_{q^r})}/ \pi_1(W)^{(3)}_{(\Q, \Z_{q^r}, \Q)}$, which is a torsion-free abelian group.
It follows that $\phi_J$ is either trivial or a maps onto a copy of $\Z$ in $\pi_1(W)^{(2)}_{(\Q, \Z_{q^r})}/ \pi_1(W)^{(3)}_{(\Q, \Z_{q^r}, \Q)}$.  But $\phi_J$ is nontrivial since  $\mu_J= \lambda_{\eta}$ does not lie in $\pi_1(W)^{(3)}_{(\Q, \Z_{q^r}, \Q)}$. By $L^{(2)}$-induction we therefore have our desired result that \[\rho^{(2)}(M_J, \phi_J)= \rho^{(2)}(M_J, \ab)= \int_{S^1}\sigma_J(\omega)\, d \omega\]
 by \cite[Propositions~2.3~and~2.4]{Cochran-Orr-Teichner:2002-1}, where $\ab \colon \pi_1(M_J) \to \Z$ is the abelianisation homomorphism.
\end{proof}

\section{Examples of non-slice knots}\label{section:examples}

Theorem~\ref{thm:sliceobstruction} gives a straightforward method to show that
for $R$ a ribbon knot and $\eta \in \pi_1(M_R)^{(2)}$,
appropriately large infections (in terms of the $\rho^{(2)}$-invariants of the infection knots) on $(R,\eta)$ are not slice or even $2.5$-solvable. We give three examples illustrating this.  Note that in each example, one could instead choose any curve $\eta'$ that is an unknot in $S^3$ with  $[\eta']=[\eta] \in \pi_1(M_R)^{(2)}/ \pi_1(M_R)^{(3)}$ and obtain the same result on the non-sliceness of $R_{\eta'}(J)$. Our examples all involve representations associated to the double branched cover, however this method works equally well for metabelian representations associated to higher prime power order branched covers.

Our first example is a small crossing number prime ribbon knot, chosen without any special prejudice from the knot tables. We go through this example in some detail.  The Maple program available on our websites contains the data working through this example.

\begin{example}\label{example:one}\textit{The knot $R_1=8_8$.}
Following the conventions of Section~\ref{section:chaincomplex} as indicated in Figure~\ref{Fig:88example}, we have that $\eta= [g_5^{-1}g_1, g_7^{-1} g_3] \in \pi=\pi_1(M_{R_1})$. (We use the convention that $[a,b]= aba^{-1}b^{-1}$.)  In particular, note that $\eta$ is in $\pi^{(2)}$.
\begin{figure}[h]
  \begin{center}
  \includegraphics[height=5cm]{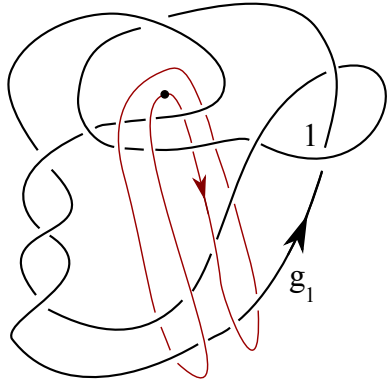}
  \caption{The knot $R_1= 8_8$.}
  \label{Fig:88example}
  \end{center}
\end{figure}
We have that $H_1(\Sigma_2(K))\cong \Z_{25}$, and hence that any character $\chi$ to $\Z_5$  must vanish on the unique metaboliser for $\lambda_2$. Note that all such onto characters will be nonzero multiples of each other, and therefore induce the same metabelian covers. So we choose such an onto map at random. Let $V= \Q(\zeta_{5})\tpm^2$.
Note that the chain group $Y_1 = V \otimes_{\Z[\pi]} C_1(M_{R_1}, \Z[\pi])$ is a free $\Q(\zeta_{5})\tpm$-module of rank 16, with basis given by $\{[1,0] \otimes \widetilde{g_i}, [0,1] \otimes \widetilde{g_i}\mid i=1,\dots,8\}$, where $\widetilde{g_i}$ are the preferred lifts of $[g_i] \in C_1(M_{R_1})$ to the universal cover. It is then straightforward to compute that with respect to this basis, $[1,0] \, \otimes\, \widetilde{\eta}$ is given by
\[
\left[
\begin{array}{cccccccccccccccc}
0&\zeta_5^2-\zeta_5&0&0   &0&1-\zeta_5^3&0&0  &0&-\zeta_5^2+\zeta_5 &0&0   &0 &-1+\zeta_5^3&0&0
\end{array}
 \right]
\]
Now we follow Sections~\ref{section:trotters-formulae} and~\ref{section:matrix-moves-computing-TBF} to compute $b_1:=\Bl_{M_{R_1}}^{\alpha(2, \chi)}([[1,0] \otimes \tilde{\eta}], [[1,0] \otimes \tilde{\eta}])$. We obtain that
$b_1=\frac{q(t)}{t^2-3t+1} \in \Q(\zeta_{5})(t)/ \Q(\zeta_{5})\tpm$
where
\begin{align*}
 q(t)=
\left( \frac{125}{2019061} \right)
& \left[ \left(14950863  +78888245 \zeta_5 - 11038450 \zeta_5^2-15959303 \zeta_5^3
\right) t \right.
\\
& \qquad +\left. \left(
-5703559-3021030\zeta_5+4192146 \zeta_5^2 +6076728 \zeta_5^3 \right)\right].
\end{align*}
Note that the degree of $q(t)$ is strictly less than the degree of $t^2-3t+1$, and so $b_1$ cannot equal 0 even in $\C (t)/ \C \tpm$.
\end{example}

The next example, originally due to \cite{Cochran-Orr-Teichner:1999-1}, was the first example of an algebraically slice knot with vanishing Casson-Gordon invariants that is nevertheless not slice, nor even 2.5-solvable. The intricate arguments that they use seem very difficult to apply to other knots, relying as they do on the fact that their pattern knot $R_2$ is fibred, together with an extremely involved analysis of higher order Alexander modules coming from the monodromy of $R_2$. Our proof is simpler and applies much more generally.

\begin{example}\label{example:COT}\textit{The Cochran-Orr-Teichner example.}
We consider the example of \cite[Section~6]{Cochran-Orr-Teichner:1999-1}, as illustrated in \cite[Figure~6.5]{Cochran-Orr-Teichner:1999-1} and our Figure~\ref{Fig:cotexample}.
\begin{figure}[h]
  \begin{center}
  \includegraphics[height=5cm]{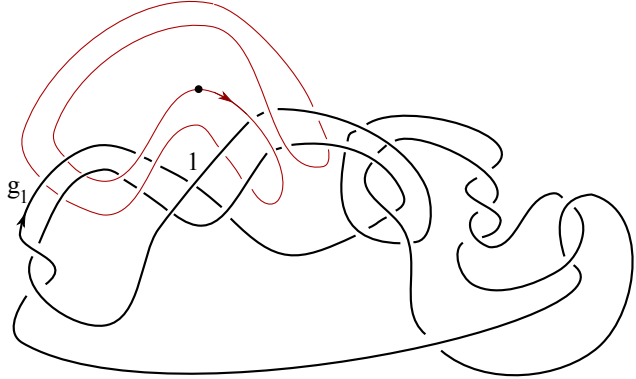}
  \caption{The ribbon knot $R_2$ with the `genetic modification' curve $\eta$ of Cochran-Orr-Teichner}
  \label{Fig:cotexample}
  \end{center}
\end{figure}
 Observe that $[\eta]=[g_7g_3^{-1}, g_1 g_4^{-1}] \in \pi_1(M_{R_2})^{(2)}$.
One can easily compute that $H_1(\Sigma_2(R)) \cong \Z_{25}$, and so there is a unique metaboliser $\Z_5 \cong P \leq H_1(\Sigma_2(R_2))$.
Let $\chi\colon H_1(\Sigma_2(R_2)) \to \Z_5$ be onto, and note that $\chi|_P=0$.
It now suffices to show that for some $v \in \Q(\zeta_5)\tpm^2$ and for any $b \geq 1$ we have that
\[b_2:=\Bl_{M_{R_2}}^{\alpha(2, \chi)}([v \otimes \tilde{\eta}], [v \otimes \tilde{\eta}])\neq 0 \in \Q(\zeta_{5^b})(t)/ \Q(\zeta_{5^b})\tpm.\]

We choose $v=[1,0]$ without any special prejudice. Computation as in Section~\ref{section:chaincomplex} gives us that $b_2= \frac{p(t)}{(t-1)^2}$ for a polynomial $p(t) \in \Q(\zeta_{5})\tpm$ with $p(1)= -6 -2(\zeta_5^2+\zeta_5^3)$. So since $p(1) \neq 0$, we have that $b_2 \neq 0 \in \C(t)/\C \tpm$, and hence our desired result.

\end{example}

Finally, we give a non-prime example for $R_3$, with the additional interesting feature that we can choose an infection curve that has interacts with only one prime factor of $R_3$. This example is also of interest in that it requires us to consider multiple metabolisers for the torsion linking form on $H_1(\Sigma_2(R_3))$.

\begin{example}\label{example:3} \textit{The square knot.}
Let $R_3= T_{2,3} \# - T_{2,3}$, and $\eta$ be as illustrated.

\begin{figure}[h]
  \begin{center}
  \includegraphics[height=5cm]{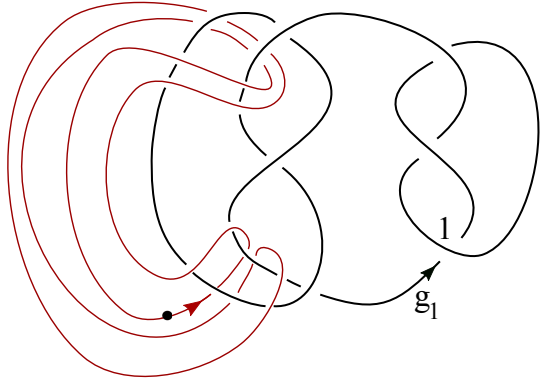}
  \caption{The square knot $R_3$}
  \label{Fig:squareknotexample}
  \end{center}
\end{figure}

Note that
$H_1(\Sigma_2(R_3)) \cong H_1(\Sigma_2(T_{2,3})) \oplus H_1(\Sigma_2(-T_{2,3}))=  \Z_3 \oplus \Z_3$ and that $\lambda_2^{R_3}= \lambda_2^{T_{2,3}} \oplus \lambda_2^{-T_{2,3}}$.
It is straightforward to check that there are two metabolisers for $\lambda_2^{R_3}$, which with respect to this decomposition are of the form $H_a = \langle (1,1) \rangle$ and $H_b= \langle (1,2) \rangle$.
 Let $\chi_a\colon H_1(\Sigma_2(R_3)) \to \Z_3$ be a nontrivial character vanishing on $H_a$, and $\chi_b$ a nontrivial character vanishing on $H_b$.
  In order to show that appropriate infections on $(R, \eta)$ are not slice (where as usual appropriate means infections by $J$ with sufficiently large $\left| \int_{S^1}\sigma_J(\omega)\, d \omega \right|$), it suffices to show that for some choice of $ v \in\Q(\zeta_{3})\tpm ^2$ and for any $s \geq 1$,
\[\Bl_{M_{R_3}}^{\alpha(2, \chi_a)}([v \otimes \tilde{\eta}], [v \otimes \tilde{\eta}]), \Bl_{M_{R_3}}^{\alpha(2, \chi_b)}([v \otimes \tilde{\eta}], [v \otimes \tilde{\eta}])
\neq 0 \in \Q(\zeta_{3^s})(t)/ \Q(\zeta_{3^s})\tpm.\]
As in the previous examples, with the help of the computer we are in fact able to show that these are both nonzero, even in $\C(t)/\C \tpm$.  We omit the details of the computation.
\end{example}

We note that there was nothing especially contrived about the knots $R_1$ and $R_3$ of our first and third examples, nor about the curves $\eta$ that we chose.  The advantage of our approach is that one obtains very explicit examples, without having to try very hard to choose the examples to fit our obstruction theory.   Of course the $\eta$ curves that we use have to sit in the right place in the derived series.  But since every algebraically slice knot is an infection by a string link on a slice knot~\cite[Proposition~1.7]{Cochran-Friedl-Teichner:2006-1}, the situation is somewhat generic.

\bibliographystyle{alpha}
\def\MR#1{}
\bibliography{research}

\end{document}